\documentclass[12pt,reqno]{amsart}
\usepackage{fullpage}
\usepackage{amsmath}
\usepackage{mathrsfs,amssymb,graphicx,verbatim,hyperref}
\usepackage{paralist}

\renewcommand{\setminus}{\smallsetminus}

\usepackage{ifthen}

\addtolength{\footskip}{17pt}

\newcommand{\ifsodaelse}[2]{\ifthenelse{\isundefined{\SODAF}}{#2}{#1}}

\ifsodaelse    {\usepackage{ltexpprt}\usepackage{amsmath}}{}
\usepackage{mathrsfs,amssymb,graphicx,verbatim}
\usepackage{paralist}
\newcommand{\K}{\mathcal K}
\newcommand\remove[1]{}
\renewcommand{\H}{\mathcal H}
\newcommand{\sign}{\mathrm{sign}}

\newcommand{\rnote}[1]{}
\newcommand{\jnote}[1]{}
\renewcommand{\S}{\mathbb S}

\newcommand{\e}{\varepsilon}
\newcommand{\R}{\mathbb{R}}

\newcommand{\E}{\mathbb{E}}

\newcommand{\N}{\mathbb{N}}

\newcommand{\Opt}{\mathrm{OPT}}
\newcommand{\SDP}{\mathrm{SDP}}

\newcommand{\C}{\mathbb{C}}

\newcommand{\f}{\varphi}

\newtheorem{theorem}{Theorem}[section]
\newtheorem{lemma}[theorem]{Lemma}

\newtheorem{corollary}[theorem]{Corollary}

\newtheorem{definition}[theorem]{Definition}

\newtheorem{remark}{Remark}[section]

\newtheorem{conjecture}[theorem]{Conjecture}
\newtheorem{question}[theorem]{Question}
\newcommand{\eqdef}{\stackrel{\mathrm{def}}{=}}
\date{}
\newcommand{\D}{\mathbb D}
\renewcommand{\le}{\leqslant}
\renewcommand{\ge}{\geqslant}








  \newtheorem{proposition}[subsection]{Proposition}

\include{psfig}


 \begin{document}

\title{The Grothendieck constant is strictly smaller than Krivine's bound}
\thanks{M.~B. was supported in part by an NSERC Discovery Grant. A.~N. was supported in part by NSF grant CCF-0832795,
BSF grant 2006009, and the Packard Foundation. Part of this work was completed when A.~N. was visiting the Discrete Analysis program at the Isaac Newton Institute for Mathematical Sciences. An extended abstract describing the contents of this work will appear in the 52nd Annual IEEE Symposium on Foundations of Computer Science.}
\author{Mark Braverman}
\address{University of Toronto, 10 King's College Road, Toronto, ON M5S 3G4.}
\email{mbraverm@cs.toronto.edu}
\author{Konstantin Makarychev}
\address{IBM T.J. Watson Research Center, P.O. Box 218, Yorktown Heights, NY 10598.}
\email{konstantin@us.ibm.com}
\author{Yury Makarychev}
\address{Toyota Technological Institute at Chicago, 6045 S. Kenwood Ave., Chicago, IL 60637.}
\email{yury@ttic.edu }
\author{Assaf Naor}
\address{Courant Institute, 251 Mercer Street, New York, NY 10012.}
\email{naor@cims.nyu.edu}

\date{}
\maketitle

\begin{abstract}  The (real) Grothendieck constant $K_G$ is the infimum over those $K\in (0,\infty)$ such that for every $m,n\in \N$ and every $m\times n$ real matrix $(a_{ij})$ we have
$$
\max_{\{x_i\}_{i=1}^m,\{y_j\}_{j=1}^n\subseteq S^{n+m-1}}\sum_{i=1}^m\sum_{j=1}^na_{ij}\langle x_i,y_j\rangle
\le K \max_{\{\e_i\}_{i=1}^m,\{\delta_j\}_{j=1}^n\subseteq \{-1,1\}}\sum_{i=1}^m\sum_{j=1}^n a_{ij}\e_i\delta_j.
$$
The classical Grothendieck inequality asserts the non-obvious fact that the above inequality does hold true for some $K\in (0,\infty)$ that is independent of $m,n$ and $(a_{ij})$. Since Grothendieck's 1953 discovery of this powerful theorem, it has found numerous applications in a variety of areas, but despite attracting a lot of attention, the exact value of the Grothendieck constant $K_G$ remains a mystery. The last progress on this problem was in 1977, when Krivine proved that $K_G\le \frac{\pi}{2\log\left(1+\sqrt{2}\right)}$ and conjectured that his bound is optimal. Krivine's conjecture has been restated repeatedly since 1977, resulting in focusing the subsequent research on the search for examples of matrices $(a_{ij})$ which exhibit (asymptotically, as $m,n\to \infty$) a lower bound on $K_G$ that matches Krivine's bound. Here we obtain an improved Grothendieck inequality that holds for {\em all} matrices $(a_{ij})$ and yields a bound $K_G<\frac{\pi}{2\log\left(1+\sqrt{2}\right)}-\e_0$ for some effective constant $\e_0>0$. Other than disproving Krivine's conjecture, and along the way also disproving an intermediate conjecture of K\"onig that was made in 2000 as a step towards Krivine's conjecture, our main contribution is conceptual: despite
dealing with a binary rounding problem, random $2$-dimensional
projections combined with a  careful partition of $\R^2$ in order to
 round the projected vectors to values in $\{-1,1\}$, perform better than the ubiquitous random hyperplane technique. By establishing the usefulness of higher dimensional rounding schemes, this fact has consequences in approximation algorithms. Specifically, it yields the best known polynomial time approximation algorithm for the Frieze-Kannan Cut Norm problem, a
generic and well-studied optimization problem with many
applications.
\end{abstract}



\section{Introduction}

In his 1953 Resum\'e~\cite{Gro53}, Grothendieck proved a theorem
that he called ``le th\'eor\`eme fondamental de la th\'eorie
metrique des produits tensoriels". This result is known today as
Grothendieck's inequality. An equivalent formulation of
Grothendieck's inequality, due to Lindenstrauss and Pe\l
czy\'nski~\cite{LP68}, states that there exists a universal constant
$K\in (0,\infty)$ such that for every $m,n\in \N$, every $m\times n$
matrix $(a_{ij})$ with real entries, and every $m+n$ unit vectors
$x_1,\ldots,x_m,y_1,\ldots,y_n\in S^{m+n-1}$, there exist
$\e_1,\ldots,\e_m,\delta_1,\ldots,\delta_n\in \{-1,1\}$ satisfying
\begin{equation}\label{eq:gro}
\sum_{i=1}^m\sum_{j=1}^n a_{ij} \langle x_i,y_j\rangle \le K
\sum_{i=1}^m\sum_{j=1}^n a_{ij} \e_i\delta_j.
\end{equation}
Here $\langle\cdot,\cdot\rangle$ denotes the standard scalar product
on $\R^{m+n}$. The infimum over those $K\in (0,\infty)$ for
which~\eqref{eq:gro} holds true is called the Grothendieck constant,
and is denoted $K_G$.

 Grothendieck's
inequality is important to several disciplines, including the
geometry of Banach spaces, $C^*$ algebras, harmonic analysis,
operator spaces, quantum mechanics, and computer science. Rather
than attempting to explain the ramifications of Grothendieck's
inequality, we refer to the books~\cite{LP77,Sch81,Pis86,Jam87,DJT95,Ble01,Gar07,AK06,DFS08} and especially Pisier's recent
survey~\cite{Pis11}. The survey~\cite{KN11} is devoted
to Grothendieck's inequality in computer science;
Section~\ref{sec:comp} below contains a brief discussion of this
topic.

Problem 3 of Grothendieck's Resum\'e asks for the determination of
the exact value of $K_G$. This problem remains open despite major
effort by many mathematicians. In fact, even though $K_G$ occurs in
numerous mathematical theorems, and has equivalent interpretations
as a key quantity in physics~\cite{Tsi85,FR94} and computer
science~\cite{AN06,RS09}, we currently do not even know what is the
second digit of $K_G$; the best known bounds~\cite{Kri77,Ree91} are
$K_G\in(1.676,1.782)$.

Following the upper bounds on $K_G$ obtained
in~\cite{Gro53,LP68,Rie74} (see also the alternative proofs of~\eqref{eq:gro} in~\cite{Mau73,MP73,Pis78,Ble87,DJT95,JL01}, yielding worse bounds on $K_G$), progress on Grothendieck's Problem 3 halted after a
beautiful 1977 theorem of Krivine~\cite{Kri77}, who proved that
\begin{equation}\label{eq:krivine}
K_G\le \frac{\pi}{2\log\left(1+\sqrt{2}\right)}\ (=1.782...).
\end{equation}
One reason for this lack of improvement since 1977 is that Krivine
conjectured~\cite{Kri77} that his bound is actually the exact
value of $K_G$. Krivine's conjecture has become the leading conjecture in this area, and as such it has been restated repeatedly in subsequent publications; see for example~\cite{Kri79,Pis78,Pis86,Kon00,CHTW04}. The belief\footnote{E.g., quoting Pisier's book~\cite[p.~64]{Pis86},  ``The best known estimate for $K_G$ is due to Krivine, who proved (cf. [Kri3]) that $K_G\le \frac{2}{\log\left(1+\sqrt{2}\right)}=1.782...$ and conjectured that this is the exact value of $K_G$ . . .   Krivine claims that he checked $K_G>\pi/2$, and he has convincing (unpublished) evidence that his bound is sharp."} that the estimate~\eqref{eq:krivine} is the exact value of $K_G$ focused research on finding examples of matrices $(a_{ij})$ that exhibit a matching lower bound on $K_G$. Following work of Haagerup, Tomczak-Jaegermann and K\"onig, the search for such matrices led in 2000 to a clean intermediate conjecture of K\"onig~\cite{Kon00}, on maximizers of a certain oscillatory integral operator, that was shown to imply Krivine's conjecture; we will explain this conjecture, which we resolve in this paper, in Section~\ref{sec:intro kon} below. Here we prove that Krivine's conjecture is false,
thus obtaining the best known upper bound on $K_G$.
\begin{theorem}\label{thm:win!} There exists $\e_0>0$ such that
$$K_G<\frac{\pi}{2\log\left(1+\sqrt{2}\right)}-\e_0.$$
\end{theorem}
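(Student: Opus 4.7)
The plan is to beat Krivine's bound~\eqref{eq:krivine} by replacing the one-dimensional random-hyperplane rounding, which underlies all previously known upper bounds on $K_G$, with a genuinely two-dimensional rounding scheme. Given unit vectors $\{x_i\},\{y_j\}$ that nearly maximize the left-hand side of~\eqref{eq:gro}, I would (a) apply a Krivine-type embedding $\phi$ of the unit sphere into a Hilbert space so that $\langle\phi(x),\phi(y)\rangle=g(\langle x,y\rangle)$ for a prescribed odd analytic $g:[-1,1]\to[-1,1]$; (b) project the images via a $2\times\infty$ standard Gaussian matrix $G$; and (c) round with a fixed antipodal measurable partition $f:\R^2\to\{-1,1\}$ (so $f(-z)=-f(z)$), setting $\e_i\eqdef f(G\phi(x_i))$ and $\delta_j\eqdef f(G\phi(y_j))$. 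Rotational invariance of the standard Gaussian on $\R^2$ forces $\E[\e_i\delta_j]=H_f(\langle\phi(x_i),\phi(y_j)\rangle)$ for a single function $H_f:[-1,1]\to[-1,1]$ that is odd and computable from the Hermite expansion of $f$. If one can arrange $H_f(g(t))=Ct$ on $[-1,1]$, then the expected rounded value is $C\sum_{i,j}a_{ij}\langle x_i,y_j\rangle$, yielding $K_G\le 1/C$; the embedding $\phi$ exists whenever $g$ lies in Krivine's admissible class, i.e.\ $g(t)=\sum_{k\ge 0}b_{2k+1}t^{2k+1}$ with $\sum_k|b_{2k+1}|\le 1$.

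\textbf{Reduction to a variational problem, and beating Krivine.} Krivine's bound is precisely the case $f(z)=\sign(z_1)$, $H_f(u)=(2/\pi)\arcsin u$, $g(t)=\sin(ct)$ with $c=\log(1+\sqrt{2})$, at which the $\ell^1$ constraint $\sum_k|b_{2k+1}|=\sinh(c)=1$ is saturated and one obtains $C=2c/\pi$. The task then reduces to exhibiting an antipodal $f:\R^2\to\{-1,1\}$ that is \emph{not} of the form $\sign\langle z,v\rangle$, together with a matching admissible $g$, for which $C>2\log(1+\sqrt{2})/\pi$. I would construct $f$ as an explicit perturbation of the half-plane partition—obtained by flipping the sign on a symmetric pair of small regions near the origin—and re-solve the functional equation $H_f\circ g=Ct$ for a correspondingly perturbed $g$. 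Since the new $H_f$ has a changed odd Taylor series, one should be able to redistribute the $\ell^1$ mass of $g$ among its coefficients and produce a strict first-order gain in the linear coefficient $C$, so that Krivine's pair fails to be a local maximum of the variational problem.

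\textbf{Main obstacle.} The chief difficulty is that at the Krivine optimum every Taylor coefficient of $g(t)=\sin(ct)$ is strictly positive and the $\ell^1$ constraint is globally tight, so any perturbation simultaneously decreases some coefficients and increases others, and the overall gain must be confirmed via a non-local computation intertwining $f$ and $g$. I expect this to require either an exact solution of the functional equation $H_f\circ g=Ct$ along a well-chosen one-parameter family of partitions $f$, or a careful coefficient-by-coefficient estimate that exploits the extra degrees of freedom created by the two-dimensional rounding; either way the argument must reach deep into Krivine's Schur-multiplier criterion in order to preserve admissibility. The very existence of such a strictly improved pair $(f,g)$ also refutes K\"onig's 2000 conjecture, which predicts that the half-plane kernel is extremal for precisely this rounding problem. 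Once the perturbation is made quantitative, the effective constant $\e_0>0$ in Theorem~\ref{thm:win!} is read off directly from the explicit gain.
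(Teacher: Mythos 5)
Your high-level strategy matches the paper's approach: lift the rounding dimension from $1$ to $2$, preprocess via a Krivine-style tensor embedding, project with a $2\times N$ Gaussian matrix, and round with a measurable antipodal partition of $\R^2$ that perturbs the half-plane. You also correctly identify that success refutes K\"onig's conjecture and that the admissible class is governed by an $\ell^1$ constraint on the odd Taylor coefficients. However, two places where the proposal would not go through as stated should be flagged, and they are precisely where the paper's real work lies.

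First, your expectation of a ``strict first-order gain in the linear coefficient $C$'' is wrong, and this is the crux of the construction. Writing $\f(\eta)\eqdef 4\pi H_\eta(i)/i$ for a family of partitions whose boundary is the curve $x_2=\eta\alpha(x_1)$ (with $\beta$ for the second function), the paper's Remark~\ref{rem:alpha beta} shows that for any odd perturbations $\alpha,\beta$ with Hermite coefficients $a_k,b_k$ one has
\[
\f''(0)=-4\sqrt{2\pi}\sum_{k\ge 0}\bigl(a_k-(-1)^k b_k\bigr)^2\le 0,
\]
so the second-order effect of a generic perturbation is strictly \emph{negative}. To have any hope you must impose $a_k=(-1)^k b_k$ for all $k$, and the actual improvement only appears at \emph{fourth} order. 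The paper's choice $\alpha=\beta=h_5$ (the fifth Hermite polynomial) is the simplest odd function compatible with this constraint for which $\f''''(0)=38400\sqrt{2}>0$ --- $\alpha=\beta=h_1$ gives $\f\equiv\f(0)$, and a generic ``symmetric pair of small regions near the origin'' falls into the $\f''(0)<0$ regime and makes matters worse. The delicate even-order perturbation analysis, and the Hermite-polynomial ansatz that makes it tractable, is the missing idea.

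Second, even once the correct partition $f_\eta$ is in hand, it is not automatic that $H_\eta$ is an admissible Krivine rounding scheme, i.e.\ that $H_\eta^{-1}$ has an absolutely convergent Taylor series whose $\ell^1$ mass can be normalized at a radius exceeding $\log(1+\sqrt2)$. The paper does \emph{not} prove this (it is left open as Conjecture~\ref{conj:no p}); instead it introduces the convex combination $F_p=(1-p)H_0+pH_\eta$ for small $p$, proves invertibility of $F_p$ on a disc of radius $>9/10$ via Rouch\'e's theorem on a compact truncation of the strip $\S$, and then controls the Taylor coefficients of $F_p^{-1}$ by Cauchy estimates together with a Lipschitz bound on $F_p$ near $i$. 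Your proposal implicitly assumes admissibility of the perturbed $g$ can simply be arranged, whereas the paper had to construct an entire auxiliary mechanism to circumvent exactly this obstruction.
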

We stress that our proof is effective, and it readily yields a
concrete positive lower bound on $\e_0$. We chose not to state an
explicit new upper bound on the Grothendieck constant since we know that
our estimate is suboptimal. Section~\ref{sec:tiger} below contains a
discussion of potential improvements of our bound, based on
challenging open problems that conceivably might even lead to an
exact evaluation of $K_G$.

\begin{remark}\label{rem:complex}{\em
There has also been major effort to estimate the complex
Grothendieck constant~\cite{Gro53,Dav85,Pis78}; the best known upper
bound in this case is due to Haagerup~\cite{Haa87}. We did not
investigate this issue here, partly because  for complex scalars
there is no clean conjectured exact value of the Grothendieck
constant in the spirit of Krivine's conjecture. Nevertheless, it is
conceivable that our approach can improve Haagerup's bound on the
complex Grothendieck constant as well. We leave this research
direction open for future investigations.}
\end{remark}

In our opinion, the interest in the exact value of $K_G$ does not
necessarily arise from the importance of this constant itself,
though the reinterpretation of $K_G$ as a fundamental constant in
physics and computer science makes it even more interesting to know
 at least its first few digits. Rather, we believe that it is very
 interesting to understand the geometric configuration of unit
 vectors $x_1,\ldots,x_m,y_1,\ldots,y_n\in S^{m+n-1}$ (and matrix
 $a_{ij}$) which make the inequality~\eqref{eq:gro} ``most
 difficult". This issue is related to the ``rounding problem" in
 theoretical computer science; see Section~\ref{sec:comp}. With this
 in mind, Krivine's conjecture corresponds to a natural geometric
 intuition about the worst spherical configuration for Grothendieck's inequality.
 This geometric picture has been crystalized and cleanly formulated
 as an extremal analytic/geometric problem due to the works of Haagerup, K\"onig,
 and Tomczak-Jaegermann. We shall now explain this issue,
 since one of the main conceptual consequences of
 Theorem~\ref{thm:win!} is that the geometric picture behind Grothendieck's inequality that was
 previously believed to be true, is actually false. Along the way, we
 resolve a conjecture of K\"onig~\cite{Kon00}.

\subsection{K\"onig's problem}\label{sec:intro kon} One can reformulate Grothendieck's
inequality using integral operators (see~\cite{Kon00}). Given a
measure space $(\Omega,\mu)$ and a kernel $K\in L_1(\Omega\times
\Omega,\mu\times \mu)$, consider the integral operator
$T_K:L_\infty(\Omega,\mu)\to L_1(\Omega,\mu)$ induced by $K$, i.e.,
$$
T_Kf(x)\eqdef\int_{\Omega} f(y)K(x,y)d\mu(y).
$$
Grothendieck's inequality asserts that for every $f,g\in
L_\infty(\Omega,\mu;\ell_2)$, i.e., two bounded measurable functions
with values in  Hilbert space,
\begin{multline}\label{eq:operator form}
\int_\Omega\int_\Omega K(x,y)\langle f(x),g(y)\rangle
d\mu(x)d\mu(y)\\\le K_G \left\|T_K\right\|_{L_\infty(\Omega,\mu)\to
L_1(\Omega,\mu)}
\|g\|_{L_\infty(\Omega,\mu;\ell_2)}\|f\|_{L_\infty(\Omega,\mu;\ell_2)}.\end{multline}

K\"onig~\cite{Kon00}, citing unpublished computations of Haagerup,
asserts that the assumption
$K_G=\pi/\left(2\log\left(1+\sqrt{2}\right)\right)$ suggests that
the oscillatory Gaussian kernel $K:\R^n\times \R^n\to \R$ given by
\begin{equation}\label{eq:defK}
K(x,y)\eqdef
\exp\left(-\frac{\|x\|_2^2+\|y\|_2^2}{2}\right)\sin(\langle
x,y\rangle)
\end{equation}
should be extremal for Grothendieck's inequality in the asymptotic
sense, i.e., for $n\to \infty$. In the rest of this paper $K$ will
always stand for the kernel appearing in~\eqref{eq:defK}, and the
corresponding bilinear form $B_K:L_\infty(\R^n)\times
L_\infty(\R^n)\to \R$ will be given by
\begin{equation}\label{eq:B_k}
B_K(f,g)\eqdef\int_{\R^n}\int_{\R^n} f(x)g(y)K(x,y)dxdy.
\end{equation}
The above discussion led K\"onig to make the following conjecture:
\begin{conjecture}[K\"onig~\cite{Kon00}]\label{Q:konig} Define $f_0:\R^n\to \{-1,1\}$ by
$f_0(x_1,\ldots,x_n)=\sign(x_1)$. Then $B_K(f,g)\le B_K(f_0,f_0)$
for every $n\in \N$ and every measurable  $f,g:\R^n\to \{-1,1\}$.
\end{conjecture}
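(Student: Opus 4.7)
The plan is to treat Conjecture~\ref{Q:konig} as a constrained optimization problem and analyze it variationally. Since $B_K$ is bilinear, maximizing over $f,g:\R^n\to\{-1,1\}$ coincides with maximizing over pairs with $\|f\|_\infty,\|g\|_\infty\le 1$, whose extreme points are precisely the $\{-1,1\}$-valued functions. Any optimal pair $(f,g)$ must therefore satisfy the first-order conditions
$$f(x) = \sign(Tg(x))\quad\text{and}\quad g(y) = \sign(Tf(y)),\qquad Tg(x)\eqdef\int_{\R^n}K(x,y)g(y)dy.$$
Since $K$ is symmetric in $(x,y)$, it is natural to look for solutions with $f=g$; equivalently, the task becomes to maximize $\|Tf\|_{L^1(\R^n)}$ over $f:\R^n\to\{-1,1\}$.

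First, I would verify that $f_0(x)=\sign(x_1)$ satisfies these conditions. Writing $\langle x,y\rangle = x_1 y_1 + \langle x',y'\rangle$ (primes denoting the last $n-1$ coordinates) and using $\sin(a+b)=\sin a\cos b+\cos a\sin b$ together with the standard Gaussian integrals $\int_\R e^{-t^2/2}\cos(st)dt = \sqrt{2\pi}\,e^{-s^2/2}$ and $\int_\R e^{-t^2/2}\sin(st)dt=0$, a direct computation yields
$$Tf_0(x) = (2\pi)^{(n-1)/2}\, e^{-\frac{x_1^2}{2}-\|x'\|^2}\, h(x_1),\qquad h(s)\eqdef 2\int_0^\infty e^{-t^2/2}\sin(st)dt.$$
The function $h$ solves the ODE $h'(s)+sh(s)=2$ with $h(0)=0$, hence is odd and never vanishes for $s\ne 0$, so $\sign h(s)=\sign s$. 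Therefore $\sign(Tf_0(x))=\sign(x_1)=f_0(x)$, confirming that $(f_0,f_0)$ solves the first-order conditions.

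To upgrade this to a global maximum, I would expand $\sin(\langle x,y\rangle)$ in the probabilist Hermite basis via the generating identity $e^{sy-s^2/2}=\sum_{k\ge 0}\frac{s^k}{k!}H_k(y)$. Taking $s=ix$ gives $e^{i\langle x,y\rangle}=e^{-\|x\|^2/2}\sum_\alpha \frac{(ix)^\alpha}{\alpha!}H_\alpha(y)$, and the imaginary part yields a bilinear expansion of $B_K(f,g)$ in the joint Hermite coefficients of $f$ and $g$ that decouples along total degree. Combined with the $O(n)$-invariance of $K$ under the diagonal action $(x,y)\mapsto(Ux,Uy)$, this should let me restrict attention to $f$ depending only on $(x_1,\|x'\|)$, after which a half-space rearrangement in the $x_1$ coordinate aims to reduce matters to a one-dimensional extremal problem.

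The main obstacle I foresee is precisely this global maximality step. The kernel $K$ is oscillatory rather than positive, so Brascamp--Lieb--Luttinger-type rearrangement inequalities (which require sign conditions on the kernel) do not apply directly, and the Hermite expansion carries alternating signs inherited from the Taylor series of $\sin$, which blocks any naive termwise monotonicity argument. In particular, a nonlinear cut of the form $\sign(\psi(x_1,x_2))$ for a carefully chosen partition $\psi$ of $\R^2$ (rather than a coordinate half-space) might conceivably yield a larger value of $\|Tf\|_{L^1}$ than $f_0$ does. Given the oscillatory nature of $K$ and the longevity of the problem, before investing further I would numerically compare $B_K(f_0,f_0)$ against such two-dimensional partition cuts, since it is entirely possible that the conjecture is in fact false.
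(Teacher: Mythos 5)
Your final suspicion is correct: Conjecture~\ref{Q:konig} is \emph{false} for every $n\ge 2$, and the paper's Section~\ref{sec:konig} disproves it rather than proving it, so no proof along the lines you sketch can exist. Your setup is nonetheless on target. The first-order conditions $f=\sign(Tg)$, $g=\sign(Tf)$ that you derive are the paper's fixed-point equations~\eqref{eq:fixed point}, and your verification that $f_0(x)=\sign(x_1)$ is a critical point, via the ODE $h'(s)+sh(s)=2$ with $h(0)=0$, is correct and is exactly the starting observation the paper makes. The obstruction you flag in the last paragraph --- that the Hermite expansion of $\sin(\langle x,y\rangle)$ carries alternating signs, so neither rearrangement inequalities nor termwise monotonicity can close the global maximality step --- is precisely what leaves room for a counterexample, and the $n=1$ case where the conjecture \emph{does} hold (the paper's Theorem~\ref{thm:KTJ}) turns out to be genuinely special.

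What is missing is the concrete perturbation computation showing that $f_0$ is a saddle, not a maximum, once $n\ge 2$. The paper's counterexample (Theorem~\ref{thm:konig}) lives in $n=2$ and is exactly the kind of nonlinear cut you anticipate: with $h_5$ the fifth Hermite polynomial, set $f_\eta(x_1,x_2)=\sign\bigl(x_2-\eta h_5(x_1)\bigr)$ and $\f(\eta)=B_K(f_\eta,f_\eta)$. Using Hermite orthogonality and the generating identity~\eqref{eq:generating}, the paper shows $\f''(0)=0$ while $\f''''(0)=38400\sqrt 2>0$, hence $\f(\eta)=\f(0)+1600\sqrt2\,\eta^4+O(\eta^6)>B_K(f_0,f_0)$ for all small $\eta>0$. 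The choice of $h_5$ is forced, not lucky: Remark~\ref{rem:alpha beta} shows that for a general odd perturbation with Hermite coefficients $(a_k)$, $(b_k)$ one has $\f''(0)=-4\sqrt{2\pi}\sum_k\bigl(a_k-(-1)^k b_k\bigr)^2\le 0$, so the second-order term can at best vanish, and $h_5$ is the lowest-degree odd Hermite polynomial satisfying the resulting constraints with $\alpha=\beta$ and a strictly positive fourth-order gain ($h_1$ gives $\f\equiv\f(0)$). If you carry out your proposed Hermite expansion of $B_K$ with this specific ansatz, your framework lands exactly on the paper's disproof.
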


In~\cite{Kon00} the following result of K\"onig and
Tomczak-Jaegermann is proved:

\begin{proposition}[K\"onig and
Tomczak-Jaegermann~\cite{Kon00}]\label{prop:false} A positive answer
to Conjecture~\ref{Q:konig} would imply that
$K_G=\frac{\pi}{2\log\left(1+\sqrt{2}\right)}$.
\end{proposition}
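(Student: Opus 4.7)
The plan is to deduce from Conjecture~\ref{Q:konig} the matching lower bound $K_G \ge \pi/(2\log(1+\sqrt{2}))$; combined with Krivine's theorem~\eqref{eq:krivine}, this gives the asserted equality. The entire argument is carried out inside the integral-operator form of Grothendieck's inequality~\eqref{eq:operator form} applied to the specific kernel $K$ of~\eqref{eq:defK}.

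The first step is to identify $\|T_K\|_{L_\infty(\R^n)\to L_1(\R^n)}$ with $B_K(f_0,f_0)$. By standard $L_1$--$L_\infty$ duality,
\[
\|T_K\|_{L_\infty\to L_1} = \sup\bigl\{|B_K(\varphi,\psi)|:\|\varphi\|_\infty,\|\psi\|_\infty \le 1\bigr\},
\]
and a Bauer maximum principle argument (the extreme points of the unit ball of $L_\infty(\R^n)$ are precisely the $\{-1,+1\}$-valued functions) restricts the supremum to sign-valued $\varphi,\psi$; Conjecture~\ref{Q:konig} then identifies this supremum with $B_K(f_0,f_0)$. Plugging into~\eqref{eq:operator form}, any Hilbert-valued $\tilde f,\tilde g\in L_\infty(\R^n;\ell_2)$ of $L_\infty$-norm at most $1$ satisfies $\int\!\int K(x,y)\langle\tilde f(x),\tilde g(y)\rangle\,dxdy \le K_G\cdot B_K(f_0,f_0)$.

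The second step, and the technical heart of the argument, is to construct a sequence of Hilbert-valued test functions $\tilde f_n,\tilde g_n:\R^n\to\ell_2$ with $\|\tilde f_n\|_\infty,\|\tilde g_n\|_\infty\le 1$ for which the ratio
\[
\frac{\int_{\R^n}\int_{\R^n} K(x,y)\langle\tilde f_n(x),\tilde g_n(y)\rangle\,dxdy}{B_K(f_0,f_0)}
\]
tends to $\pi/(2\log(1+\sqrt{2}))$ as $n\to\infty$. The guiding identity is Krivine's sinh--arcsinh calculation: with $c:=\log(1+\sqrt{2})$, the unique positive root of $\sinh(c)=1$, there is a spherical Hilbertian lift $U:S^{n-1}\to\ell_2$ with $\|U(\xi)\|=1$ and $\langle U(\xi),U(\eta)\rangle=\sin(c\langle\xi,\eta\rangle)$. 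One combines $U$ with the scalar worst-rounding $\mathrm{sign}(x_1)$ from Conjecture~\ref{Q:konig} through an appropriate product or twisted construction (e.g.\ a factor $\mathrm{sign}(x_1)$ times a spherical Krivine-type lift of the orthogonal directions, arranged so that odd symmetries do \emph{not} kill the oscillatory integrand), and evaluates the resulting high-dimensional Gaussian oscillatory integral by a stationary-phase/saddle-point analysis. The value $c=\log(1+\sqrt{2})$ emerges precisely as the threshold at which the unit-norm constraint $\|\tilde f_n\|_\infty\le 1$ is saturated, producing the target ratio $\pi/(2c)$ in the limit. Letting $n\to\infty$ in the displayed Grothendieck bound of the first step then forces $K_G \ge \pi/(2\log(1+\sqrt{2}))$.

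The main obstacle is the asymptotic calculation in the second step. A naive ansatz such as $\tilde f_n(x)=x/\|x\|$ yields an exponentially small numerator, because $\cos\theta_{x,y}$ concentrates at scale $1/\sqrt{n}$ while $\sin\langle x,y\rangle$ oscillates at scale $1$, producing destructive interference; more generally, innocuous symmetries (oddness in a single coordinate, parity of $\cos$ vs.\ $\sin$ in the polar angle) can cause the candidate integrals to vanish identically. The Hilbert-valued test functions must therefore be chosen so that $\langle\tilde f_n(x),\tilde g_n(y)\rangle$ oscillates \emph{in phase} with $\sin\langle x,y\rangle$ at the correct Gaussian scale, and the fact that the Krivine constant $c=\log(1+\sqrt{2})$ is precisely the value forced by the combination of normalization and phase-matching is exactly the content of the delicate computation---essentially the ``unpublished computations of Haagerup'' alluded to in~\cite{Kon00}---that underlies Proposition~\ref{prop:false}.
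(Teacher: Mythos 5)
Your Step~1 is the right skeleton: dualize the $L_\infty\to L_1$ norm, pass to extreme points of the unit ball of $L_\infty$ (the $\{-1,1\}$-valued functions), and invoke Conjecture~\ref{Q:konig} to obtain $\|T_K\|_{L_\infty(\R^n)\to L_1(\R^n)}=B_K(f_0,f_0)$. Since $f_0$ is odd, the cosine part of the Gaussian Fourier kernel integrates to zero, and the relation~\eqref{eq:alternating} together with Grothendieck's identity $H_{f_0,f_0}(t)=\tfrac{2}{\pi}\arcsin(t)$ gives the explicit value $B_K(f_0,f_0)=(\sqrt{2}\pi)^n\cdot\tfrac{2}{\pi}\log\bigl(1+\sqrt{2}\bigr)=2^{n/2}\pi^{n-1}\cdot 2\log\bigl(1+\sqrt{2}\bigr)$, which combined with Krivine's upper bound would finish the job as soon as a matching lower bound on the numerator of~\eqref{eq:operator form} is produced.

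Step~2, however, contains a genuine error, and it is exactly the place where your proposal departs from what actually works. You dismiss the ansatz $\tilde f(x)=\tilde g(x)=x/\|x\|_2:\R^n\to S^{n-1}\subseteq\ell_2^n$ on the grounds that it ``yields an exponentially small numerator'' through destructive interference, and replace it with a vague ``twisted construction'' built from a $\sin\bigl(c\langle\cdot,\cdot\rangle\bigr)$-lift and a stationary-phase evaluation. That is backwards. The ansatz $x/\|x\|_2$ is precisely the choice made in~\cite{Kon00} (and reproduced in this paper in~\eqref{eq:numerator}); the key computation (equation (2.3) of~\cite{Kon00}) gives
$$
\int_{\R^n}\int_{\R^n}K(x,y)\,\frac{\langle x,y\rangle}{\|x\|_2\|y\|_2}\,dx\,dy=2^{n/2}\pi^n\Bigl(1-\tfrac{1}{n}+O\bigl(\tfrac{1}{n^2}\bigr)\Bigr),
$$
which is indeed exponentially small compared to the raw normalization $(2\pi)^n$ --- but so is the denominator $B_K(f_0,f_0)$, and at exactly the same rate $2^{n/2}\pi^n$. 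The ratio therefore tends to $\pi/\bigl(2\log(1+\sqrt{2})\bigr)$ as $n\to\infty$, which is all that is needed. Your ``main obstacle'' is illusory: oscillation makes both sides of the ratio small, not just the numerator. Finally, the $\sinh/\mathrm{arcsinh}$ identity and the Hilbertian lift $\langle U(\xi),U(\eta)\rangle=\sin\bigl(c\langle\xi,\eta\rangle\bigr)$ belong to the proof of Krivine's \emph{upper} bound (the preprocessing step of Section~\ref{sec:comp}); importing them into the lower-bound argument is a category error, and as written your construction is not specified enough to constitute a proof. Note also that the paper itself does not re-prove Proposition~\ref{prop:false}: it cites~\cite{Kon00} and only sketches the lower-bound calculation around~\eqref{eq:wild hope}--\eqref{eq:numerator}, so the comparison above is against that sketch and the source it points to.
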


Proposition~\ref{prop:false} itself can be viewed as motivation for
Conjecture~\ref{Q:konig}, since it is consistent with Haagerup's
work and Krivine's conjecture. But, there are additional reasons why
Conjecture~\ref{Q:konig} is natural. First of all, we know due to
Lieb's work~\cite{Lie90} that general Gaussian kernels, when viewed
as operators from $L_p(\R^n)$ to $L_q(\R^n)$, have only Gaussian
maximizers provided $p$ and $q$ satisfy certain conditions. The
kernel $K$ does not fit into Lieb's framework, since it is the
imaginary part of a Gaussian kernel (the Gaussian Fourier transform)
rather than an actual Gaussian kernel, and moreover the range
$p=\infty$ and $q=1$ is not covered by Lieb's theorem. Nevertheless,
in light of Lieb's theorem one might expect that maximizers of
kernels of this type have a simple structure, which could be viewed
as a weak justification of Conjecture~\ref{Q:konig}. A much more
substantial justification of Conjecture~\ref{Q:konig} is that
in~\cite{Kon00} K\"onig announced an unpublished result that he
obtained jointly with Tomczak-Jaegermann asserting that
Conjecture~\ref{Q:konig} is true for $n=1$.

\begin{theorem}\label{thm:KTJ}
For every Lebesgue measurable $f,g:\R\to \{-1,1\}$ we have
\begin{multline}\label{eq:KT}
\int_{\R}\int_\R f(x)g(y)\exp\left(-\frac{x^2+y^2}{2}\right)\sin(xy)dx dy\\\le
\int_{\R}\int_\R \sign(x)\sign(y)\exp\left(-\frac{x^2+y^2}{2}\right)\sin(xy)dx dy=2\sqrt{2}\log\left(1+\sqrt{2}\right).
\end{multline}
Moreover, equality in~\eqref{eq:KT} is attained only when $f(x)=g(x)=\sign(x)$ almost everywhere or $f(x)=g(x)=-\sign(x)$ almost everywhere.
\end{theorem}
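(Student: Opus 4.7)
The plan is first to exploit the odd-symmetry of the kernel $e^{-(x^2+y^2)/2}\sin(xy)$: since it is odd in each variable, only the odd parts of $f,g$ contribute. Replacing $f,g$ by their odd parts (with $|f_o|,|g_o|\le 1$) and using bilinearity plus convexity of the constraint set, I may assume $f,g$ are odd and $\{-1,1\}$-valued, so the problem reduces to proving
\[
J(F,G)\eqdef \int_0^\infty\!\int_0^\infty F(x)G(y)\sin(xy)e^{-(x^2+y^2)/2}\,dx\,dy\;\le\;\tfrac{1}{\sqrt 2}\log(1+\sqrt 2)
\]
for all $F,G:(0,\infty)\to\{-1,1\}$, with equality iff $F\equiv G\equiv 1$ (or both $\equiv -1$). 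The target constant emerges from a polar computation: with $x=r\cos\theta$, $y=r\sin\theta$, $s=r^2/2$, the $s$-integral equals $\int_0^\infty\sin(s\sin 2\theta)e^{-s}\,ds=\frac{\sin 2\theta}{1+\sin^2 2\theta}$, and then $v=\cos 2\theta$ yields $J(\mathbf 1,\mathbf 1)=\int_0^1 dv/(2-v^2)=\frac{1}{\sqrt 2}\log(1+\sqrt 2)$, matching $I(\sign,\sign)=4J(\mathbf 1,\mathbf 1)=2\sqrt 2\log(1+\sqrt 2)$.

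For the main inequality I would begin variationally. At any maximiser of $J$ under the pointwise constraints $|F|,|G|\le 1$, the Euler--Lagrange conditions force $F(x)=\sign(h_G(x))$ and $G(y)=\sign(h_F(y))$ wherever these integrals are nonzero, where $h_G(x)\eqdef\int_0^\infty G(y)\sin(xy)e^{-(x^2+y^2)/2}dy$. Setting $G\equiv 1$ gives $h_{\mathbf 1}(x)=e^{-x^2}\int_0^x e^{t^2/2}\,dt>0$ on $(0,\infty)$, confirming that $(\mathbf 1,\mathbf 1)$ is a fixed point. However, scanning the one-parameter family $G_a=\mathbf 1-2\mathbf 1_{(0,a)}$ and checking the sign of $h_{G_a}(a)$ as $a\to 0^+$ and $a\to\infty$ (negative, then positive) shows by intermediate-value that additional non-trivial critical points exist; so the argument must do more than catalogue critical points.

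The main obstacle is thus to distinguish $(\mathbf 1,\mathbf 1)$ as the \emph{global} maximiser. My preferred route is to pass to the Hermite expansion. Analytic continuation of Mehler's formula to $r=i$ yields
\[
e^{-(x^2+y^2)/2}\sin(xy)=\sqrt 2\,e^{-(x^2+y^2)}\sum_{k\ge 0}\frac{(-1)^k}{(2k+1)!}\,\tilde H_{2k+1}(\sqrt 2\,x)\,\tilde H_{2k+1}(\sqrt 2\,y),
\]
so $I(f,g)=\sqrt 2\,\pi\sum_{k\ge 0}(-1)^k\hat f_{2k+1}\hat g_{2k+1}$, where $\hat f_n$ are the Hermite coefficients of $f$ in $L^2(\R,\pi^{-1/2}e^{-x^2}dx)$. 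For $f=g=\sign$ one computes $\hat f_{2k+1}^{\,2}=\tfrac{2}{\pi}\binom{2k}{k}/(4^k(2k+1))$, and $\sum_{k}(-1)^k\hat f_{2k+1}^{\,2}$ collapses, via the Taylor identity $\operatorname{arcsinh}(1)=\sum_{k}(-1)^k\binom{2k}{k}/(4^k(2k+1))$, to exactly $\tfrac{2}{\pi}\log(1+\sqrt 2)$, reproducing the right-hand side.

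The hardest step is then to bound the alternating bilinear sum $\sum_{k}(-1)^k\hat f_{2k+1}\hat g_{2k+1}$ by $\tfrac{2}{\pi}\log(1+\sqrt 2)$ under the pointwise constraint $f^2=g^2=1$: Cauchy--Schwarz using only $\sum_n\hat f_n^{\,2}=1$ yields the slack bound $1$, so closing the gap requires exploiting the infinite family of quadratic relations among the Hermite coefficients forced by $f^2=1$ (via the Hermite product formula $\tilde H_m\tilde H_n=\sum_k\binom{m}{k}\binom{n}{k}k!\,\tilde H_{m+n-2k}$), or equivalently a duality argument that matches the alternating signs $(-1)^k$ against a convex hull of $\pm 1$-sign patterns. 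The equality case then follows because any deviation from $f=\pm\sign$ produces a strict slack in these relations, so $f=g=\pm\sign$ a.e.\ is the only configuration achieving $I(f,g)=2\sqrt 2\log(1+\sqrt 2)$.
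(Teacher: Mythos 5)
Your preliminaries are correct and agree with the paper's setup: reducing to odd, $\{-1,1\}$-valued functions on $(0,\infty)$ via bilinearity and convexity; the weak-compactness argument for the existence of a maximiser; the Euler--Lagrange sign conditions $F=\sign(h_G)$, $G=\sign(h_F)$; and the evaluation of the target constant (your polar computation is essentially Grothendieck's identity $H_0(z)=\arcsin z$ evaluated at $z=i$). Your observation that the sign conditions admit spurious critical points, so that one must genuinely argue for a \emph{global} maximum, is also a correct and non-trivial diagnostic.

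But the proposal then stops exactly where the proof has to begin. You reformulate the problem as bounding the alternating Hermite bilinear form $\sum_k(-1)^k\hat f_{2k+1}\hat g_{2k+1}\le\frac{2}{\pi}\log(1+\sqrt 2)$ under $f^2=g^2=1$, observe correctly that Cauchy--Schwarz alone gives only the trivial bound $1$, and then write that ``closing the gap requires exploiting the infinite family of quadratic relations among the Hermite coefficients \dots or equivalently a duality argument.'' That is a description of a research problem, not an argument: no mechanism is given for how the product formula $\tilde H_m\tilde H_n=\sum_k\binom{m}{k}\binom{n}{k}k!\,\tilde H_{m+n-2k}$ combined with $f^2=1$ actually forces the alternating sum down to $\frac{2}{\pi}\log(1+\sqrt 2)$, nor why the extremiser would be unique. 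As it stands, this is a genuine gap and the theorem is not proved.

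The paper takes an entirely different and fully explicit analytic route that avoids Hermite expansions. It first shows that $I(y)=\int_0^\infty f(x)e^{-(x^2+y^2)/2}\sin(xy)\,dx$ is $\sqrt 2$-Lipschitz (Lemma~\ref{lem:LipI}), proves the quantitative local lower bound $\int_{z-1/4}^{z+1/4}\int_0^\infty e^{-(x^2+y^2)/2}\sin(xy)\,dx\,dy>\tfrac{3}{20}$ for $z\in[2/5,4/3]$ (Lemma~\ref{lem:y0}), and shows $M_1-M_0<\tfrac1{20}$ where $M_1$ uses $|\sin|$ (Lemma~\ref{lem:M0M1}). Combining these, any maximiser $g$ cannot have a sign change on $[2/5,4/3]$ (Lemma~\ref{lem:small const int}): a zero of $I$ there would, by the Lipschitz bound, make $I$ too small on an interval of length $1/2$, pushing the value below $M_0$. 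WLOG $f=1$ on $[2/5,4/3]$, and then a bootstrap propagates this: elementary estimates show $f=g=1$ on $[0,4/3]$, then on $[0,3\pi/4]$, and finally, using period-wise positivity of $\int e^{-x^2/2}\sin(xy)\,dx$ over intervals $[2k\pi/y,2(k+1)\pi/y]$ together with the Gaussian tail bound of Lemma~\ref{lem:a^3}, that the supremum $A$ of the interval on which $f=g=1$ satisfies $A=\infty$. If you wish to salvage your Hermite-expansion program you would have to supply the duality or quadratic-constraint argument you allude to; otherwise the paper's bootstrap is the complete proof.
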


We believe that it is important to have a published proof of
Theorem~\ref{thm:KTJ}, and for this reason we prove it in
Section~\ref{sec:KTJ}. Conceivably our proof is similar to the
unpublished proof of K\"onig and Tomczak-Jaegermann, though they
might have found a different explanation of this phenomenon. Since
Theorem~\ref{thm:win!} combined with Proposition~\ref{prop:false}
implies that K\"onig's conjecture is false, and as we shall see it
is false already for $n=2$, Theorem~\ref{thm:KTJ}  highlights special behavior of the one
dimensional case.

Our proof of Theorem~\ref{thm:win!} starts by disproving K\"onig's
conjecture for $n=2$. This is done in Section~\ref{sec:konig}.
Obtaining an improved upper bound on the Grothendieck constant
requires a substantial amount of additional work that uses the
counterexample to Conjecture~\ref{Q:konig}. This is carried out in
Section~\ref{sec:win}. The failure of K\"onig's conjecture shows
that the situation is more complicated than originally hoped, and in
particular that for $n>1$ the maximizers of the kernel $K$ have a
truly high-dimensional behavior. This more complicated geometric
picture  highlights the availability of high dimensional rounding
schemes that are more sophisticated (and  better) than ``hyperplane
rounding". These issues are discussed in Section~\ref{sec:comp} and
Section~\ref{sec:tiger}.

\section{Krivine-type rounding schemes and algorithmic
implications}\label{sec:comp}

Consider the following optimization problem. Given an $ m\times n$
matrix $A=(a_{ij})$, compute in polynomial time the value
\begin{equation}\label{eq:def opt}
\Opt(A)\eqdef \max_{\e_1,\ldots,\e_m,\delta_1,\ldots,\delta_n\in
\{-1,1\}} \sum_{i=1}^m\sum_{j=1}^n a_{ij}\e_i\delta_j.
\end{equation}
We refer to~\cite{AN06,KN11} for a discussion of the combinatorial
significance of this problem. It suffices to say here that it
relates to the problem of computing efficiently the Cut Norm of a
matrix, which is a subroutine in a variety of applications, starting
with the pioneering work of Frieze and Kannan~\cite{FK99}. Special
choices of matrices $A$ in~\eqref{eq:def opt} lead to specific
problems of interest, including efficient construction of
Szemer\'edi partitions~\cite{AN06}.

As shown in~\cite{AN06}, there exists $\delta_0>0$ such that the
existence of a polynomial time algorithm that outputs a number that
is guaranteed to be within a factor of $1+\delta_0$ of $\Opt(A)$
would imply that P=NP. But, since the quantity
$$
\SDP(A)\eqdef \max_{x_1,\ldots,x_m,y_1,\ldots,y_n\in S^{m+n-1}}
\sum_{i=1}^m\sum_{j=1}^m a_{ij} \langle x_i,y_j\rangle
$$
can be computed in polynomial time with arbitrarily good precision
(it is a semidefinite program~\cite{GLS81}), Grothendieck's inequality tells us
that the polynomial time algorithm that outputs the number $\SDP(A)$
is always within a factor of $K_G$ of $\Opt(A)$.

Remarkably, the work of Raghavendra and Steurer~\cite{RS09} shows
that $K_G$ has a complexity theoretic interpretation: it is likely
that no polynomial time algorithm can approximate $\Opt(A)$ to
within a factor smaller than $K_G$.
More precisely, it is shown in~\cite{RS09} that
$K_G$ is the Unique Games hardness threshold of the problem of
computing $\Opt(A)$. To explain what this means we briefly recall
Khot's Unique Games Conjecture~\cite{Kho02} (the version described
below is equivalent to the original one, as shown in~\cite{KKMO07}).

Khot's conjecture asserts that for every $\e>0$ there exists a prime
$p=p(\e)\in \N$ such that there is no polynomial time algorithm that,
given $n\in \N$ and a system of $m$ linear equations in $n$
variables of the form $$x_i-x_j\equiv c_{ij}\mod p$$ for some
$c_{ij}\in \N$, determines whether there exists an assignment of an
integer value to each variable $x_i$ such that at least $(1-\e)m$ of
the equations are satisfied, or whether no assignment of such values
can satisfy more than $\e m$ of the equations (if neither of these
possibilities occur, then an arbitrary output is allowed).

The Unique Games Conjecture is by now a common assumption that has
numerous applications in computational complexity. We have already
seen that there exists a polynomial time algorithm that computes
$\Opt(A)$ to within a factor of $K_G$. The Raghavendra-Steurer
theorem says that if there were a polynomial time algorithm ${ALG}$
that computes $\Opt(A)$ to within a factor $c<K_G$, then the Unique
Games Conjecture would be false. This means that there is
$\e=\e_c\in (0,1)$ such that, for all primes $p$, Raghavendra and
Steurer  design an algorithm that makes one call to
the algorithm ${ALG}$, with at most polynomially many additional
Turing machine steps, which successfully solves the problem described above on
the satisfiability of linear equations modulo $p$. Note that
Raghavendra and Steurer manage to do this despite the fact that the
value of $K_G$ is unknown.

Theorem~\ref{thm:win!} yields the first improved upper bound on the
Unique Games hardness threshold of the $\Opt(A)$ computation problem
since Krivine's 1977 bound. As we shall see, what hides behind
Theorem~\ref{thm:win!} is also a new algorithmic method which is of
independent interest. To explain this, note that the above
discussion dealt with the problem of computing the {\em number}
$\Opt(A)$. But it is actually of greater interest to find in
polynomial time signs $\e_1,\ldots,\e_m,\delta_1,\ldots,\delta_n\in
\{-1,1\}$ from among all such $2^{m+n}$ choices of signs, for which
$\sum_{i=1}^m\sum_{j=1}^n a_{ij}\e_i\delta_j$ is at least a constant
multiple $\Opt(A)$. This amounts to a ``rounding problem": we need to
find a procedure that, given
vectors $x_1,\ldots,x_m,y_1,\ldots,y_n\in S^{m+n-1}$, produces signs
$\e_1,\ldots,\e_m,\delta_1,\ldots,\delta_n\in \{-1,1\}$ whose
existence is ensured by Grothendieck's inequality~\eqref{eq:gro}.

Krivine's proof of~\eqref{eq:krivine} is based on a clever two-step
rounding procedure. We shall now describe a  generalization of
Krivine's method.

\begin{definition}[Krivine rounding scheme]\label{def:kri round}
Fix $k\in \N$ and assume that we are given two odd measurable functions $f,g:\R^k\to
\{-1,1\}$. Let $G_1,G_2\in \R^k$ be independent random vectors that
are distributed according to the standard Gaussian measure on $\R^k$, i.e., the measure with density $x\mapsto e^{-\|x\|_2^2/2}/(2\pi)^{k/2}$. For $t\in (-1,1)$ define
\begin{multline}\label{eq:h in intro}
H_{f,g}(t)\eqdef \E\left[f\left(\frac{1}{\sqrt{2}}G_1\right)g\left(\frac{t}{\sqrt{2}}G_1+\frac{\sqrt{1-t^2}}{\sqrt{2}}G_2\right)\right]\\
= \frac{1}{\pi^k(1-t^2)^{k/2}}\int_{\R^k}\int_{\R^k} f(x)g(y)\exp\left(\frac{-\|x\|_2^2-\|y\|_2^2+2t\langle x,y\rangle}{1-t^2}\right)dxdy.
\end{multline}
Then $H_{f,g}$ extends to an analytic function on the strip $\{z\in \C:\ \Re(z)\in (-1,1)\}$. We shall call $\{f,g\}$ a Krivine rounding scheme if $H_{f,g}$ is invertible on a neighborhood of the origin, and if we consider the Taylor expansion
\begin{equation}\label{eq:def coefficients}
H_{f,g}^{-1}(z)=\sum_{j=0}^\infty a_{2j+1} z^{2j+1}
\end{equation}
then there exists $c=c(f,g)\in (0,\infty)$ satisfying
\begin{equation}\label{eq:def c}
\sum_{j=0}^\infty |a_{2j+1}|c^{2j+1}=1.
\end{equation}
(Only odd Taylor coefficients appear in~\eqref{eq:def coefficients} since $H_{f,g}$, and therefore also $H_{f,g}^{-1}$, is odd.)
\end{definition}

\begin{definition}[Alternating Krivine rounding scheme]\label{def:kri round alt}
A Krivine rounding scheme $\{f,g\}$ is called an alternating Krivine rounding scheme if the coefficients $\{a_{2j+1}\}_{j=0}^\infty\subseteq \R$ in~\eqref{eq:def coefficients} satisfy $\sign(a_{2j+1})=(-1)^j$ for all $j\in \N\cup\{0\}$. Note that in this case equation~\eqref{eq:def c} becomes $H_{f,g}^{-1}(ic)/i=1$, or
\begin{equation}\label{eq:alternating}
c(f,g)=\frac{H_{f,g}(i)}{i}\stackrel{\eqref{eq:defK}\wedge\eqref{eq:B_k}\wedge\eqref{eq:h in intro}}{=} \frac{B_K(f,g)}{\left(\sqrt{2}\pi\right)^k}.
\end{equation}
\end{definition}

Given a Krivine rounding scheme $f,g:\R^k\to \{-1,1\}$ and
$x_1,\ldots,x_m,y_1,\ldots,y_n\in S^{m+n-1}$, the (generalized)
Krivine rounding method proceeds via the following two steps.

\medskip
\noindent{\bf Step 1 (preprocessing the vectors).} Consider the Hilbert space
$$
\H=\bigoplus_{j=0}^\infty \left(\R^{m+n}\right)^{\otimes (2j+1)}.
$$
For $x\in S^{m+n-1}$ we can then define two vectors $I(x),J(x)\in \H$ by
\begin{equation}\label{eq:IJ1}
I(x)\eqdef \sum_{j=0}^\infty |a_{2j+1}|^{1/2}c^{(2j+1)/2}x^{\otimes (2j+1)}
 \end{equation}
 and
 \begin{equation}\label{IJ2}
 J(x)\eqdef \sum_{j=0}^\infty \sign(a_{2j+1})|a_{2j+1}|^{1/2}c^{(2j+1)/2}x^{\otimes (2j+1)},
\end{equation}
where $c=c(f,g)$. The choice of $c$ was made in order to ensure that $I(x)$ and $J(x)$ are unit vectors in $\H$. Moreover, the definitions~\eqref{eq:IJ1} and~\eqref{IJ2} were made so that the following identity holds:
\begin{equation}\label{eq:identity gen groth}
\forall x,y\in S^{m+n-1},\quad \langle I(x),J(y)\rangle_\H=H_{f,g}^{-1}(c\langle x,y\rangle).
\end{equation}
The preprocessing step of the Krivine rounding method transforms the initial unit vectors $\{x_r\}_{r=1}^m,\{y_s\}_{s=1}^n\subseteq S^{m+n-1}$ to vectors $\{u_r\}_{r=1}^m,\{v_s\}_{s=1}^n\subseteq S^{m+n-1}$ satisfying the identities
\begin{equation}\label{eq:use gen groth iden}
\forall (r,s)\in \{1,\ldots,m\}\times \{1,\ldots,n\}\quad \langle u_r,v_s\rangle = \langle I(x_r),J(y_s)\rangle_\H\stackrel{\eqref{eq:identity gen groth}}{=}H_{f,g}^{-1}(c\langle x_r,y_s\rangle).
\end{equation}
As explained in~\cite{AN06}, these new vectors can be computed efficiently provided $H_{f,g}^{-1}$ can be computed efficiently; this simply amounts to computing a Cholesky decomposition.

\medskip
\noindent{\bf Step 2 (random projection).} Let $G: \R^{m+n}\to \R^k$ be a random $k\times (m+n)$ matrix whose entries are i.i.d.
standard Gaussian random variables. Define random signs $\sigma_1,\ldots,\sigma_m,\tau_1,\ldots,\tau_n\in \{-1,1\}$ by
\begin{equation}\label{eq:step 2}
\forall (r,s)\in \{1,\ldots,m\}\times \{1,\ldots,n\}\quad \sigma_r\eqdef f\left(\frac{1}{\sqrt{2}}Gu_r\right)\quad\mathrm{and}\quad \tau_s\eqdef g\left(\frac{1}{\sqrt{2}}Gv_s\right).
\end{equation}

\medskip

Having obtained the random signs $\sigma_1,\ldots,\sigma_m,\tau_1,\ldots,\tau_n\in \{-1,1\}$ as in~\eqref{eq:step 2}, for every $m\times n$ matrix $(a_{rs})$ we have
\begin{multline*}
\max_{\e_1,\ldots,\e_m,\delta_1,\ldots,\delta_n\in \{-1,1\}}\sum_{r=1}^m\sum_{s=1}^n a_{rs}\e_r\delta_s\ge \E\left[\sum_{r=1}^m\sum_{s=1}^n a_{rs}\sigma_r\tau_s\right]\\\stackrel{(\clubsuit)}{=} \E\left[\sum_{r=1}^m\sum_{s=1}^n a_{rs}
H_{f,g}\left(\langle u_r,v_s\rangle\right)\right]\stackrel{\eqref{eq:use gen groth iden}}{=}c(f,g)\sum_{r=1}^m\sum_{s=1}^n a_{rs}\langle x_r,y_s\rangle,
\end{multline*}
where ($\clubsuit$) follows by rotation invariance from~\eqref{eq:step 2} and~\eqref{eq:h in intro}. We have thus proved the following corollary, which yields a systematic way to bound the Grothendieck constant from above.

\begin{corollary}\label{cor:krivine}
Assume that $f,g:\R^k\to \{-1,1\}$ is a Krivine rounding scheme. Then
$$
K_G\le \frac{1}{c(f,g)}.
$$
\end{corollary}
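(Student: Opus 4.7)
The plan is to verify that the two-step preprocessing-and-projection procedure described immediately before the corollary produces, for every $m\times n$ matrix $(a_{rs})$ and every configuration $\{x_r\}_{r=1}^m,\{y_s\}_{s=1}^n\subseteq S^{m+n-1}$, a random choice of signs whose expected value is at least $c(f,g)\sum_{r,s}a_{rs}\langle x_r,y_s\rangle$. Since this bound applies in particular to the vectors that achieve $\SDP(A)$, and since the left-hand side is always $\le \Opt(A)$, the definition of $K_G$ then forces $K_G\le 1/c(f,g)$.

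First I would verify that Step~1 is well-defined. The tensor-power vectors $I(x),J(x)\in\H$ of~\eqref{eq:IJ1}--\eqref{IJ2} are unit vectors precisely because of the normalization~\eqref{eq:def c}, and the identity~\eqref{eq:identity gen groth} is a one-line computation using $\langle x^{\otimes (2j+1)}, y^{\otimes (2j+1)}\rangle_{\H} = \langle x, y\rangle^{2j+1}$ and the Taylor expansion~\eqref{eq:def coefficients}. The resulting $(m+n)\times(m+n)$ Gram matrix of $\{I(x_r)\}\cup\{J(y_s)\}$ is positive semidefinite with unit diagonal, so a Cholesky decomposition realizes the desired $\{u_r\},\{v_s\}$ as unit vectors in $\R^{m+n}$ satisfying~\eqref{eq:use gen groth iden}.

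The heart of the argument is the identity $\E[\sigma_r\tau_s] = H_{f,g}(\langle u_r, v_s\rangle)$ marked $(\clubsuit)$ in the excerpt. I would obtain it from Gaussian rotation invariance: since $u_r$ and $v_s$ are unit vectors with $\langle u_r,v_s\rangle = t$, the pair $(Gu_r, Gv_s)$ of vectors in $\R^k$ is jointly Gaussian with the same covariance structure as $(G_1,\, tG_1+\sqrt{1-t^2}\,G_2)$ for independent standard Gaussians $G_1, G_2\in\R^k$. Substituting into~\eqref{eq:step 2} and comparing with the explicit formula for $H_{f,g}$ in~\eqref{eq:h in intro} yields the identity. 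Chaining everything together gives
$$\max_{\e,\delta\in\{\pm 1\}} \sum_{r,s} a_{rs}\e_r\delta_s \;\ge\; \E\left[\sum_{r,s} a_{rs}\sigma_r\tau_s\right] \;=\; \sum_{r,s} a_{rs}\, H_{f,g}\bigl(H_{f,g}^{-1}(c\langle x_r,y_s\rangle)\bigr) \;=\; c(f,g)\sum_{r,s} a_{rs}\langle x_r, y_s\rangle,$$
and the corollary follows.

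There is no real obstacle here: the cleverness has been absorbed into the definition of a Krivine rounding scheme and into the tensor-power embedding of Step~1. What remains is a short chain of identities glued together by Gaussian rotation invariance, the only genuine verification being that the abstract Hilbert-space vectors $I(x_r), J(y_s)\in\H$ can be concretely realized as unit vectors in $\R^{m+n}$, which follows from positive semidefiniteness of the associated Gram matrix.
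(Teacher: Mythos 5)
Your proposal is correct and follows the same route as the paper: the paper's ``proof'' of this corollary is precisely the preprocessing-plus-random-projection chain you reconstruct, with the same use of Gaussian rotation invariance to obtain $\E[\sigma_r\tau_s]=H_{f,g}(\langle u_r,v_s\rangle)$ and the same tensor/Cholesky realization of the unit vectors $u_r,v_s$. (One small slip: the expectation \emph{equals} $c(f,g)\sum_{r,s}a_{rs}\langle x_r,y_s\rangle$ rather than merely being at least that, as your own displayed chain of equalities shows.)
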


Krivine's proof of~\eqref{eq:krivine} corresponds to Corollary~\ref{cor:krivine} when $k=1$ and $f(x)=g(x)=\sign(x)$. In this case $\{f,g\}$ is an alternating Krivine rounding scheme with $H_{f,g}(t)=\frac{2}{\pi}\arcsin(t)$ (Grothendieck's identity). By~\eqref{eq:alternating} we have $c(f,g)=\frac{2}{\pi i}\arcsin(i)=\frac{2}{\pi}\log\left(1+\sqrt{2}\right)$, so that Corollary~\ref{cor:krivine} does indeed correspond to Krivine's bound~\eqref{eq:krivine}.

One might expect that, since we want to round vectors $x_1,\ldots,x_m,y_1,\ldots,y_n\in S^{m+n-1}$ to signs $\e_1,\ldots,\e_m,\delta_1,\ldots,\delta_n\in \{-1,1\}$, the best possible Krivine rounding scheme occurs when $k=1$ and $f(x)=g(x)=\sign(x)$. This is the intuition leading to  K\"onig's conjecture.  The following simple corollary of Theorem~\ref{thm:KTJ} says that among all {\em one dimensional} Krivine rounding schemes $f,g:\R\to\{-1,1\}$ we indeed  have $c(f,g)\le c(\sign,\sign)$, so it does not pay off to take partitions of $\R$ which are more complicated than the half-line partitions.

\begin{lemma}\label{lem:cfg}
Let $f,g:\R\to \R$ be a Krivine rounding scheme. Then $c(f,g)\le \frac{2}{\pi}\log\left(1+\sqrt{2}\right)$.
\end{lemma}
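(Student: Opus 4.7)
The plan is to reduce Lemma~\ref{lem:cfg} to Theorem~\ref{thm:KTJ} via the intermediate inequality $c(f,g)\le |C|$, where $C := H_{f,g}(i)/i$. Evaluating the integral formula in~\eqref{eq:h in intro} at $t=i$, using $\sqrt{1-i^2}=\sqrt{2}$ and $e^{ixy}=\cos(xy)+i\sin(xy)$, and noting that the oddness of $f,g$ in each coordinate kills the cosine part of the integrand, gives $H_{f,g}(i)/i = B_K(f,g)/(\sqrt{2}\,\pi)$. Applying Theorem~\ref{thm:KTJ} to both $\{f,g\}$ and $\{f,-g\}$ then yields $|C|=|B_K(f,g)|/(\sqrt{2}\,\pi)\le \frac{2}{\pi}\log(1+\sqrt{2})$, and the lemma would follow from $c(f,g)\le |C|$.

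To prove $c(f,g)\le |C|$, write $F(z) = H_{f,g}^{-1}(z)=\sum_{j\ge 0}a_{2j+1}z^{2j+1}$ with radius of convergence $R$ at the origin, and introduce
\[
Q(r) := \sum_{j\ge 0}|a_{2j+1}|\,r^{2j+1}, \qquad P(r) := \sum_{j\ge 0}(-1)^j a_{2j+1}\,r^{2j+1} = \frac{F(ir)}{i},
\]
both analytic on $\{|r|<R\}$, satisfying $|P(r)|\le Q(|r|)$ by the triangle inequality and with $Q$ strictly increasing on $[0,R)$. The defining condition $Q(c(f,g))=1$ forces $c(f,g)\le R$, so if $|C|\ge R$ the bound $c(f,g)\le R\le |C|$ is immediate. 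In the remaining case $|C|<R$, the goal is to show $P(C)=1$; given this, $1=|P(C)|\le Q(|C|)$ combined with the strict monotonicity of $Q$ and $Q(c(f,g))=1$ yields $|C|\ge c(f,g)$.

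I would prove $P(C)=1$ by substituting $z\mapsto iz$ into the functional inversion $F\circ H_{f,g}=\id$: setting $q(s) := H_{f,g}(is)/i$, which is odd and analytic in a neighborhood of $\R$ since $H_{f,g}$ is analytic on the strip $\{|\Re z|<1\}\supseteq i\R$, this substitution yields the formal identity $P\circ q = \id$ near $s=0$. I then propagate this identity analytically along the real segment $s\in[0,1]$ and invoke continuity of $q$ at $s=1$ to conclude $P(C) = P(q(1)) = 1$.

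The main technical obstacle is the justification of the analytic continuation of $P\circ q = \id$ all the way to $s=1$. The composition is defined through Taylor series only where $|q(s)|<R$, so if $|q(\cdot)|$ exits the disk of radius $R$ somewhere on $[0,1]$ the argument is not immediate. This can be handled by using the analytic continuation of $F$ beyond its Taylor disk along a curve avoiding critical values of $H_{f,g}$, or by approximating via $q(rs)$ for $r<1$ and passing to the limit $r\to 1^-$. Once $P(C)=1$ is established, the remainder of the proof is a direct consequence of the triangle inequality and the monotonicity of $Q$.
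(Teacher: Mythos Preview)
Your approach is essentially the paper's, arranged directly rather than by contradiction: the paper assumes $c>\frac{2}{\pi}\log(1+\sqrt{2})$, notes that then the radius of convergence $r$ satisfies $r\ge c>|\alpha|$ (where $\alpha$ is your $C$), writes $1=H_{f,g}^{-1}(i\alpha)/i=\sum_j(-1)^j a_{2j+1}\alpha^{2j+1}\le \sum_j|a_{2j+1}|\,|\alpha|^{2j+1}$, and concludes $c\le|\alpha|$. The contradiction framing eliminates your case split on $|C|\gtrless R$, and the identity you single out as the ``main technical obstacle'' (your $P(C)=1$, the paper's $H_{f,g}^{-1}(i\alpha)=i$) is asserted in the paper directly from $i\alpha=H_{f,g}(i)$ without any discussion of analytic continuation.
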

\begin{proof}
Denote $c=c(f,g)$ and assume for contradiction that $c>\frac{2}{\pi}\log\left(1+\sqrt{2}\right)$. Let $r$ be the radius of convergence of the power series of $H_{f,g}^{-1}$ given in~\eqref{eq:def coefficients}. Due to~\eqref{eq:def c} we know that $r\ge c>\frac{2}{\pi}\log\left(1+\sqrt{2}\right)$. Denote
\begin{equation}\label{eq:def alpha rem}
\alpha\eqdef \frac{H_{f,g}(i)}{i}\stackrel{\eqref{eq:defK}\wedge\eqref{eq:B_k}\wedge\eqref{eq:h in intro}}{=}\frac{B_K(f,g)}{\pi\sqrt{2}}.
\end{equation}
By Theorem~\ref{thm:KTJ} we have $|\alpha|\le \frac{2}{\pi}\log\left(1+\sqrt{2}\right)<r$, and therefore $H_{f,g}^{-1}$ is well defined at the point $i\alpha\in \C$. Thus,
$$
1\stackrel{\eqref{eq:def alpha rem}}{=}\frac{H_{f,g}^{-1}(i\alpha)}{i}\stackrel{\eqref{eq:def coefficients}}{=} \sum_{j=0}^\infty (-1)^ja_{2j+1}\alpha^{2j+1}\le \sum_{j=0}^\infty |a_{2j+1}|\cdot |\alpha|^{2j+1}.
$$
By the definition of $c$ in~\eqref{eq:def c} we deduce that $c\le |\alpha|\le \frac{2}{\pi}\log\left(1+\sqrt{2}\right)$, as required.
\end{proof}

The conceptual message behind Theorem~\ref{thm:win!} is that, despite the above satisfactory state of affairs in the one dimensional case, it does pay off to use more complicated higher dimensional partitions. Specifically, our proof of Theorem~\ref{thm:win!} uses the following rounding procedure. Let $c,p\in (0,1)$ be small enough absolute constants. Given $\{x_r\}_{r=1}^m, \{y_s\}_{s=1}^n\subseteq  S^{m+n-1}$, we preprocess them to obtain new vectors $\{u_r=u_r(p,c)\}_{r=1}^m, \{v_s=v_s(p,c)\}_{s=1}^n\subseteq S^{m+n-1}$. Due to certain technical complications, these new vectors are obtained via a procedure that is similar to the preprocessing step (Step 1) described above, but is not identical to it. We refer to Section~\ref{sec:win} for a precise description of the preprocessing step that we use (we conjecture that this complication is unnecessary; see Conjecture~\ref{conj:no p}). Once the new vectors  $\{u_r\}_{r=1}^m, \{v_s\}_{s=1}^n\subseteq S^{m+n-1}$ have been constructed, we take an $2\times (m+n)$ matrix $G$ with entries that are i.i.d. standard Gaussian random variables, and we consider the random vectors $\{Gu_r=((Gu_r)_1,(Gu_r)_2)\}_{r=1}^m, \{Gv_s=((Gv_s)_1,(Gv_s)_2)\}_{s=1}^n\subseteq \R^2$. Having thus obtained new vectors in $\R^2$, with probability $(1-p)$ we ``round" our initial vectors to the signs $\{\sign((Gu_r)_2)\}_{r=1}^m, \{\sign((Gv_s)_2)\}_{s=1}^n\subseteq \R$, while with probability $p$ we round $x_r$  to $+1$ if
\begin{equation}\label{eq:fifth1}
(Gu_r)_2\ge c\left(((Gu_r)_1)^5-10((Gu_r)_1)^3+15(Gu_r)_1\right).
\end{equation}
and we round $x_r$ to $-1$ if
\begin{equation}\label{eq:fifth2}
(Gu_r)_2< c\left(((Gu_r)_1)^5-10((Gu_r)_1)^3+15(Gu_r)_1\right).
\end{equation}
For concreteness, at this juncture it suffices to describe our rounding procedure without explaining how it was derived --- the origin of the fifth degree polynomial appearing in~\eqref{eq:fifth1} and \eqref{eq:fifth2} will become clear in Section~\ref{sec:konig} and Section~\ref{sec:win}. The rounding procedure for $y_s$ is identical to~\eqref{eq:fifth1} and \eqref{eq:fifth2}, with $(Gv_s)_1, (Gv_s)_2$ replacing $(Gu_r)_1,(Gu_r)_2$, respectively.


\begin{figure}[here]
\begin{center}
\includegraphics[height=57mm, width=137mm]{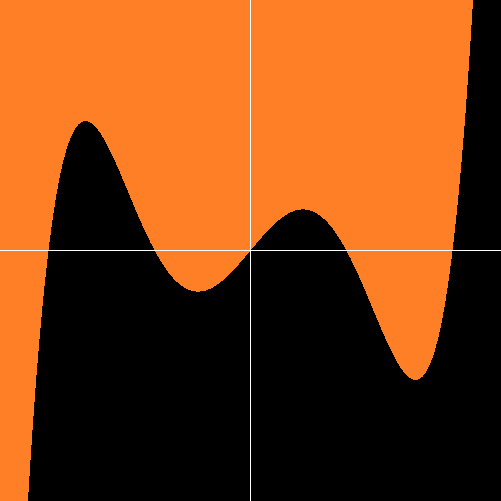}
\end{center}
\caption{The rounding procedure used in the proof of Theorem~\ref{thm:win!} relies on the partition of $\R^2$ depicted above. After a preprocessing step, high dimensional vectors are projected randomly onto $\R^2$ using a matrix with i.i.d. standard Gaussian entries. With a certain fixed probability, if the projected vector falls above the graph $y=c(x^5-10x^3+15x)$ then it is assigned the value $+1$, and otherwise it is assigned the value $-1$.}\label{fig:hermite}
\end{figure}

\section{The tiger partition and directions for future
research}\label{sec:tiger}

The partition of the plane described in Figure~\ref{fig:hermite} leads to a proof of Theorem~\ref{thm:win!}, but it is not the optimal partition for this purpose. It makes more sense to use the partitions corresponding to maximizers $f_{\max},g_{\max}:\R^2\to \{-1,1\}$ of Krivine's bilinear form $B_K$ as defined in~\eqref{eq:B_k}, i.e.,
\begin{multline}\label{eq:def maximizer}
B_K(f_{\max},g_{\max})=\max_{f,g:\R^2\to \{-1,1\}} B_K(f,g)\\=\max_{f,g:\R^2\to \{-1,1\}}\int_{\R^2}\int_{\R^2}f(x)g(y)\exp\left(-\frac{\|x\|_2^2+\|y\|_2^2}{2}\right)\sin(\langle x,y\rangle)dxdy.
\end{multline}
A straightforward weak compactness argument shows that the maximum in~\eqref{eq:def maximizer} is indeed attained (see Section~\ref{sec:konig}).

Given $f:\R^2\to \{-1,1\}$ define $\sigma(f):\R^2\to \{-1,1\}$ by
$$
\sigma(f)(y)\eqdef
\sign\left(\int_{\R^2}f(x)e^{-\|x\|_2^2/2}\sin\left(\langle
x,y\rangle \right)dx\right).
$$
Then
\begin{equation}\label{eq:fixed point}
\sigma(f_{\max})=g_{\max}\quad\mathrm{and}\quad  \sigma(g_{\max})=f_{\max}.
\end{equation}
Given $f:\R^2\to \{-1,1\}$ we can then hope to approach $f_{\max}$ by considering the iterates $\{\sigma^{2j}(f)\}_{j=1}^\infty$. If these iterates converge to $f_\infty$ then the pair of functions $\{f_\infty,\sigma(f_\infty)\}$ would satisfy the equations~\eqref{eq:fixed point}. One can easily check that $\sigma(f_0)=f_0$ when $f_0:\R^2\to \{-1,1\}$ is given by $f_0(x_1,x_2)=\sign(x_2)$. But, we have experimentally applied the above iteration procedure to a variety of initial functions $f\neq f_0$ (both deterministic and random choices), and in all cases the numerical computations suggest that the iterates $\{\sigma^{2j}(f)\}_{j=1}^\infty$ converge to the function $f_\infty$ that is depicted in Figure~\ref{fig:7} and Figure~\ref{fig:50} (the corresponding function $g_\infty=\sigma(f_\infty)$ is different from $f_\infty$, but has a similar structure).

\begin{figure}[here]
\begin{center}
\includegraphics[height=117mm]{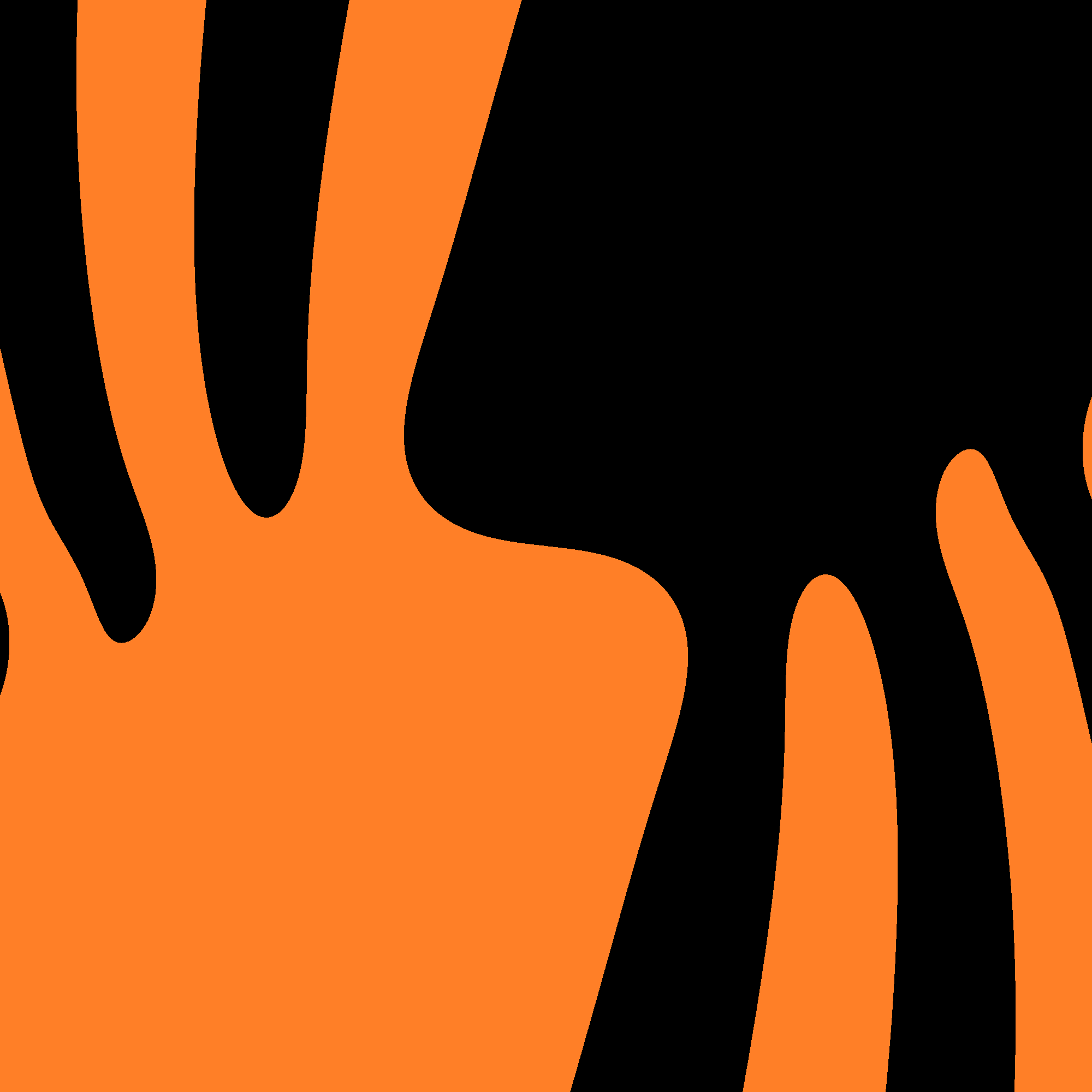}
\end{center}
\caption{The ``tiger partition": a depiction of the limiting function $f_\infty$ restricted to the square $[-7,7]\times [-7,7]\subseteq \R^2$, based on numerical computations. The two shaded regions correspond to the points where $f_\infty$ takes the values $+1$ and $-1$.}\label{fig:7}
\end{figure}

\begin{figure}[here]
\begin{center}
\includegraphics[height=117mm]{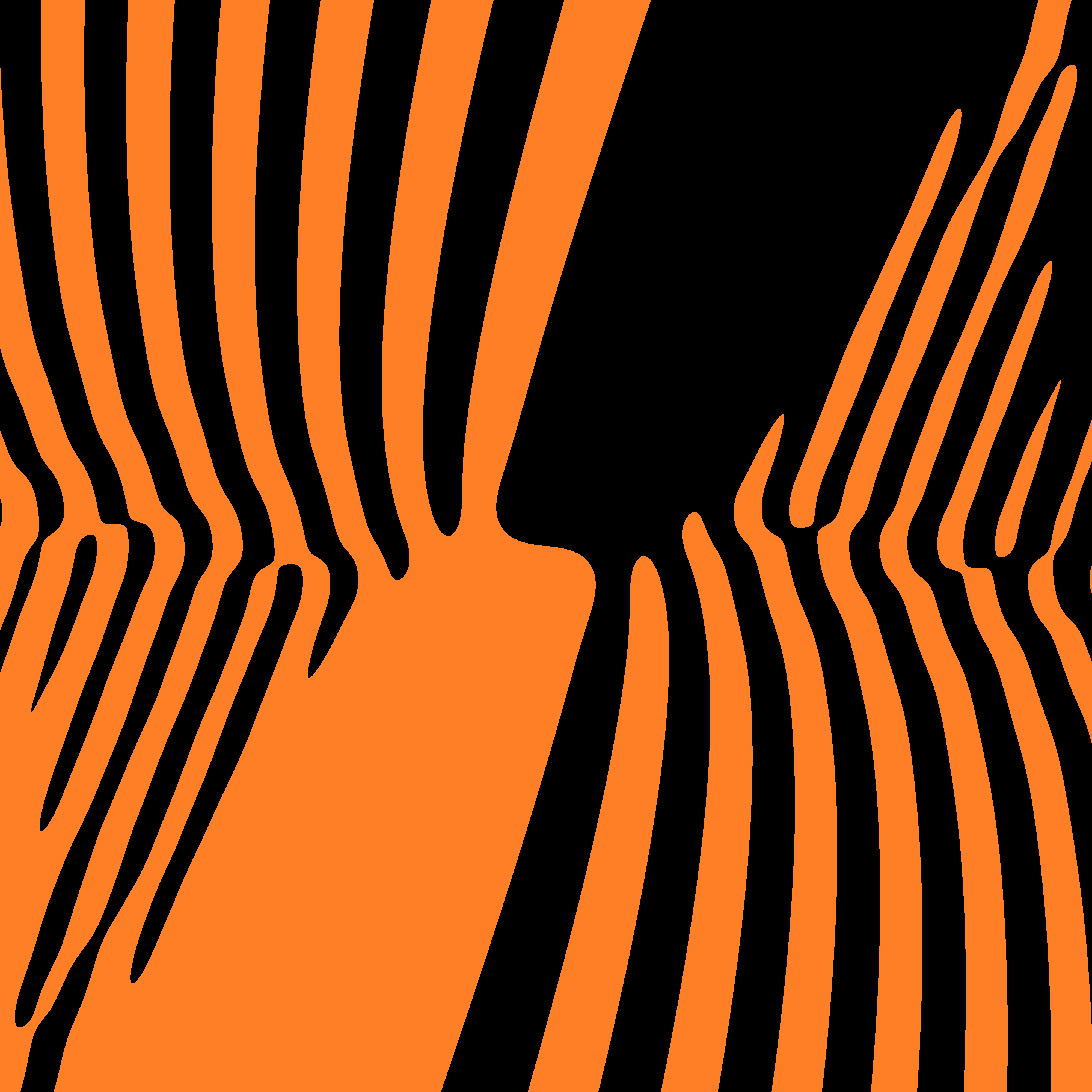}
\end{center}
\caption{A zoomed-out view of the tiger partition: a depiction of the limiting function $f_\infty$ restricted to the square $[-20,20]\times [-20,20]\subseteq \R^2$, based on numerical computations. }\label{fig:50}
\end{figure}

\begin{question}\label{Q:analytic}
Find an analytic description of the function $f_\infty$ from Figure~\ref{fig:7} and Figure~\ref{fig:50}. Our numerical computations suggest that the iterates $\{\sigma^{2j}(f)\}_{j=1}^\infty$ converge to $f_\infty$ for (almost?) all initial data $f:\R^2\to \{-1,1\}$ with $f\neq f_0$. Can this statement be made rigorous? If so, is it the case the $\{f_\infty,\sigma(f_\infty)\}$ are maximizers of the bilinear form $B_K$? We conjecture that the answer to this question is positive.
\end{question}

\begin{question}\label{Q:high dim}
Analogously to the above planar computations, can one find an analytic description of the maximizers $f_{\max}^{(n)},g_{\max}^{(n)}:\R^n\to \{-1,1\}$ of the $n$-dimensional version of K\"onig's bilinear form $B_K$? If so, does $\{f_{\max}^{(n)},g_{\max}^{(n)}\}$ form an alternating Krivine rounding scheme (recall Definition~\ref{def:kri round alt})?
\end{question}

We do not have sufficient data to conjecture whether the answer to Question~\ref{Q:high dim} is positive or negative. But, we note that if  $\{f_{\max}^{(n)},g_{\max}^{(n)}\}$ were an alternating Krivine rounding scheme then
\begin{equation}\label{eq:wild hope}
K_G=\sup_{n\in \N} \frac{\left(\sqrt{2}\pi\right)^n}{B_K\left(f_{\max}^{(n)},g_{\max}^{(n)}\right)}=\sup_{n\in \N} \frac{\left(\sqrt{2}\pi\right)^n}{\left\|T_K\right\|_{L_\infty(\R^n)\to L_1(\R^n)}}.
\end{equation}
Indeed, assuming that $\{f_{\max}^{(n)},g_{\max}^{(n)}\}$ is an alternating Krivine rounding scheme the upper bound in~\eqref{eq:wild hope} follows from Corollary~\ref{cor:krivine} and the identity~\eqref{eq:alternating}. For the reverse inequality in~\eqref{eq:wild hope} we proceed as in~\cite{Kon00}. Using~\eqref{eq:operator form} with $f,g:\R^n\to S^{n-1}$ given by $f(x)=g(x)=x/\|x\|_2$, we see that
\begin{equation}\label{eq:numerator}
K_G\ge \frac{\int_{\R^n}\int_{\R^n} K(x,y)\frac{\langle x,y\rangle}{\|x\|_2\|y\|_2}dxdy}{\left\|T_K\right\|_{L_\infty(\R^n)\to L_1(\R^n)}},
\end{equation}
and we conclude that~\eqref{eq:wild hope} is true since by equation (2.3) in~\cite{Kon00} the integral in the numerator of~\eqref{eq:numerator} equals $2^{n/2}\pi^n(1-1/n+O(1/n^2))$.

\section{A counterexample to K\"onig's conjecture}\label{sec:konig}

In this section we will make use of several facts on the Hermite
polynomials, for which we refer to~\cite[Sec.~6.1]{AAR99}. We let
$\{h_m:\R\to \R\}_{m=0}^\infty$ denote the sequence of Hermite
polynomials normalized so that they form an orthonormal basis with
respect to the measure on $\R$ whose density is $x\mapsto e^{-x^2}$.
Explicitly,
\begin{equation}\label{eq:def hermite}
h_m(x)\eqdef \frac{(-1)^m}{\sqrt{2^mm!\sqrt{\pi}}}\cdot
e^{x^2}\frac{d^m}{dx^m}\left(e^{-x^2}\right),
\end{equation}
so that
\begin{equation}\label{eq:orthogonality}
\int_{\R} h_m(x)h_k(x) e^{-x^2}dx=\delta_{mk}.
\end{equation}
For the correctness of the normalization in~\eqref{eq:def hermite},
see equation (6.1.5) in~\cite{AAR99}.

For reasons that will become clear in Remark~\ref{rem:alpha beta} below, we will consider
especially the fifth Hermite polynomial $h_5$, in which
case~\eqref{eq:def hermite} becomes
$$
h_5(x)=\frac{4x^5-20x^3+15x}{2\sqrt[4]{\pi}\sqrt{15}}.
$$
We record for future use the following technical lemma.

\begin{lemma}\label{lem:computations} The following identities hold true:
\begin{equation}\label{eq:fourth power}
\int_{-\infty}^\infty e^{-x^2}h_5(x)^4dx=\frac{4653}{\sqrt{\pi}},
\end{equation}
and
\begin{equation}\label{eq:mixed power}
\int_{-\infty}^\infty\int_{-\infty}^\infty \exp\left(-\frac{x_1^2+y_1^2}{2}\right)h_5(x_1)^2h_5(y_1)^2\cos(x_1y_1)dx_1dy_1 =49\sqrt{2}.
\end{equation}
\end{lemma}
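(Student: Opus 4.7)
My plan is to reduce both identities to the same finite arithmetic computation: the expansion of $h_5^2$ in the orthonormal Hermite basis. Since $h_5$ has degree $5$ and odd parity, $h_5^2$ is an even polynomial of degree $10$, so there exist unique reals $a_0,\dots,a_5$ with
\begin{equation*}
h_5^2 \;=\; \sum_{k=0}^{5} a_k\, h_{2k}.
\end{equation*}
These coefficients are determined explicitly from the classical linearization formula for physicist's Hermite polynomials, $H_m(x) H_n(x) = \sum_{k=0}^{\min(m,n)} \binom{m}{k}\binom{n}{k} k!\, 2^k\, H_{m+n-2k}(x)$, applied at $m=n=5$, combined with the normalization $h_n = H_n/\sqrt{2^n n!\sqrt{\pi}}$ that is implicit in~\eqref{eq:def hermite}.

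Granted this expansion, identity~\eqref{eq:fourth power} is immediate from orthonormality~\eqref{eq:orthogonality}: $\int_{-\infty}^{\infty} e^{-x^2} h_5(x)^4\,dx = \sum_{k=0}^{5} a_k^2$, which on plugging in the explicit $a_k$ comes out to $4653/\sqrt{\pi}$. For~\eqref{eq:mixed power} the key extra tool is the Hermite Fourier eigenfunction identity,
\begin{equation*}
\int_{-\infty}^{\infty} h_n(x)\, e^{-x^2/2}\, e^{-ix\xi}\, dx \;=\; \sqrt{2\pi}\,(-i)^n h_n(\xi)\, e^{-\xi^2/2} \qquad (n \in \N\cup\{0\}),
\end{equation*}
which I would derive in a few lines by completing the square in the generating function $\sum_n H_n(x) t^n/n! = e^{2xt-t^2}$ and matching Taylor coefficients in $t$. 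Writing $\cos(x_1 y_1) = \mathrm{Re}\, e^{ix_1y_1}$, replacing $h_5(x_1)^2$ by its Hermite expansion, and applying the displayed identity at $\xi = -y_1$ (so that $(-i)^{2k} = (-1)^k$ and the evenness of $h_{2k}$ absorb the remaining sign) produces
\begin{equation*}
\int_{-\infty}^{\infty} e^{-x_1^2/2}\, h_5(x_1)^2\, e^{ix_1 y_1}\, dx_1 \;=\; \sqrt{2\pi}\sum_{k=0}^{5}(-1)^k a_k\, h_{2k}(y_1)\, e^{-y_1^2/2}.
\end{equation*}
Multiplying by $e^{-y_1^2/2} h_5(y_1)^2$ and integrating over $y_1$, the orthonormality relation~\eqref{eq:orthogonality} collapses the double integral to $\sqrt{2\pi}\sum_{k=0}^{5}(-1)^k a_k^2$; this quantity is manifestly real, so it agrees with the cosine integral on the left-hand side of~\eqref{eq:mixed power}.

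The proof is therefore reduced to two explicit scalar identities: $\sum_{k=0}^5 a_k^2 = 4653/\sqrt{\pi}$ and $\sum_{k=0}^5(-1)^k a_k^2 = 49/\sqrt{\pi}$. The only obstacle I anticipate is bookkeeping, namely carefully tracking the factor $\sqrt{2^n n!\sqrt{\pi}}$ between the normalization of $h_n$ from~\eqref{eq:def hermite} and the physicist's Hermite polynomials $H_n$ in whose basis the linearization formula is most conveniently stated. Once the explicit values of the $a_k$ are tabulated, both sums are routine to verify by hand, so no conceptual difficulty remains.
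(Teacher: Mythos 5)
Your proposal is correct and takes a genuinely different route from the paper. The paper proves both identities by multiplying copies of the generating function $\sum_n h_n(x) t^n / \sqrt{n!} = \pi^{-1/4}\exp(\sqrt{2}tx - t^2/2)$, integrating the resulting exponential against the appropriate Gaussian weight (in the second case after establishing the ancillary Gaussian integral~\eqref{eq:AB2} by a change of variables), and then extracting the coefficient of $u_1^5 u_2^5 u_3^5 u_4^5$ (resp. $u_1^5u_2^5v_1^5v_2^5$) on both sides. You instead linearize $h_5^2 = \sum_{k=0}^5 a_k h_{2k}$ via the classical Hermite product formula, after which~\eqref{eq:fourth power} becomes $\sum_k a_k^2 = 4653/\sqrt{\pi}$ by orthonormality~\eqref{eq:orthogonality}, and~\eqref{eq:mixed power} becomes $\sqrt{2\pi}\sum_k(-1)^ka_k^2 = 49\sqrt{2}$ by the Fourier eigenfunction property $\widehat{h_n e^{-x^2/2}} = (-i)^n h_n e^{-\xi^2/2}$ (which absorbs the evenness of $h_{2k}$ when $\xi\to -y_1$). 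I verified the arithmetic: with $c_j = \binom{5}{j}^2 j!\,2^j$ one has $a_{5-j}^2\sqrt{\pi} = c_j^2\, 2^{10-2j}(10-2j)!/(2^5\cdot 5!)^2$, and the plain and alternating sums of $c_j^2 2^{10-2j}(10-2j)!$ over $j=0,\dots,5$ are $68611276800 = 4653\cdot(2^5\cdot 5!)^2$ and $-722534400 = -49\cdot(2^5\cdot 5!)^2$, which give exactly the claimed values. What your route buys is transparency: both integrals are expressed through the \emph{same} six coefficients $a_k$, with the $\cos$ in~\eqref{eq:mixed power} responsible for nothing more than inserting the signs $(-1)^k$; the paper's route is more self-contained in the sense that it needs nothing beyond the generating function and elementary Gaussian integrals. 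Both are valid; yours is arguably the cleaner way to see why the two numbers $4653$ and $49$ are related.
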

\begin{proof} We shall use equation (6.1.7) in~\cite{AAR99}, which asserts that for every $x,t\in \R$ we have,
\begin{equation}\label{eq:generating}
\sum_{n=0}^\infty \frac{h_n(x)}{\sqrt{n!}}t^n=\frac{1}{\sqrt[4]{\pi}}\exp\left(\sqrt{2}tx-\frac{t^2}{2}\right).
\end{equation}
It follows that for every $x,u_1,u_2,u_3,u_4\in \R$,
\begin{multline}\label{eq:product1} \left(\sum_{a=0}^\infty
\frac{h_a(x)}{\sqrt{a!}}u_1^a\right)\left(\sum_{b=0}^\infty
\frac{h_b(x)}{\sqrt{b!}}u_2^b\right)
\left(\sum_{c=0}^\infty\frac{h_c(x)}{\sqrt{c!}}u_3^c\right)\left(\sum_{d=0}^\infty
\frac{h_d(x)}{\sqrt{d!}}u_4^d\right)\\=\frac{1}{\pi}
\exp\left(\sqrt{2} x(u_1+u_2+u_3+u_4)-\frac{u_1^2+u_2^2+u_3^2+u_4^2}{2}\right).
\end{multline}
Note that for every $A\in \R$ we have
\begin{equation}\label{eq:moment generating}
\int_{-\infty}^\infty \exp\left(-x^2+\sqrt{2}Ax\right)dx=e^{A^2/2}\int_{-\infty}^\infty \exp\left(-\left(x-\frac{A}{\sqrt{2}}\right)^2\right)dx=\sqrt{\pi}e^{A^2/2}.
\end{equation}
Hence, if we multiply both sides of~\eqref{eq:product1} by $e^{-x^2}$ and integrate over $x\in \R$, we obtain the following identity:
 \begin{multline}\label{eq:to equat coef1}
\sum_{a=0}^\infty\sum_{b=0}^\infty\sum_{c=0}^\infty\sum_{d=0}^\infty\frac{u_1^au_2^bu_3^cu_4^d}{\sqrt{a!b!c!d!}}
\int_{-\infty}^\infty\int_{-\infty}^\infty e^{-x^2}h_a(x)h_b(x)h_c(x)h_d(x)dx\\
\stackrel{\eqref{eq:product1}\wedge \eqref{eq:moment generating}}{=}\frac{1}{\sqrt\pi}\exp\left(\frac{(u_1+u_2+u_3+u_4)^2-u_1^2-u_2^2-u_3^2-u_4^2}{2}\right).
\end{multline}
By equating the coefficients of $u_1^5u_2^5u_3^5u_4^5$ on both sides of~\eqref{eq:to equat coef1}, we deduce that
$$
\frac{1}{14400}\int_{-\infty}^\infty e^{-x^2/2}h_5(x)^4dx=\frac{517}{1600\sqrt{\pi}},
$$
implying~\eqref{eq:fourth power}.

To prove~\eqref{eq:mixed power}, starting from~\eqref{eq:generating} we see that for every $x_1,y_1,u_1,u_2,v_1,v_2\in \R$ we have,
\begin{multline}\label{eq:product2} \left(\sum_{a=0}^\infty
\frac{h_a(x_1)}{\sqrt{a!}}u_1^a\right)\left(\sum_{b=0}^\infty
\frac{h_b(x_1)}{\sqrt{b!}}u_2^b\right)
\left(\sum_{c=0}^\infty\frac{h_c(y_1)}{\sqrt{c!}}v_1^c\right)\left(\sum_{d=0}^\infty
\frac{h_d(y_1)}{\sqrt{d!}}v_2^d\right)\\=\frac{1}{\pi}
\exp\left(\sqrt{2} x_1(u_1+u_2)+\sqrt{2} y_1(
v_1+v_2)-\frac{u_1^2+u_2^2+v_1^2+v_2^2}{2}\right).
\end{multline}
We next note the following identity: for every $A,B\in \R$ we have,
\begin{equation}\label{eq:AB2}
\int_{-\infty}^\infty\int_{-\infty}^\infty \exp\left(\sqrt{2}Ax+\sqrt{2}By-\frac{x^2+y^2}{2}\right)\cos(xy)dxdy=
\sqrt{2}\pi e^{(A^2+B^2)/2}\cos(AB).
\end{equation}
We will prove~\eqref{eq:AB2} in a moment, but let's first assume its validity and see how to complete the proof of~\eqref{eq:mixed power}. If we multiply~\eqref{eq:product2} by $\exp\left(-\frac{x_1^2+x_2^2}{2}\right)\cos(x_1y_1)$, and then integrate with respect to $(x_1,y_1)\in \R^2$, we obtain the following identity:
\begin{multline}\label{eq;to equat coef}
\sum_{a=0}^\infty\sum_{b=0}^\infty\sum_{c=0}^\infty\sum_{d=0}^\infty\frac{u_1^au_2^bv_1^cv_2^d}{\sqrt{a!b!c!d!}}
\int_{-\infty}^\infty\int_{-\infty}^\infty h_a(x_1)h_b(x_1)h_c(y_1)h_d(y_1)\cos(x_1y_1)dx_1dy_1\\
\stackrel{\eqref{eq:product2}\wedge \eqref{eq:AB2}}{=}\sqrt{2}e^{u_1u_2+v_1v_2}\cos((u_1+u_2)(v_1+v_2)).
\end{multline}
By equating the coefficients of $u_1^5u_2^5v_1^5v_2^5$ on both sides of~\eqref{eq;to equat coef}, we deduce that
$$
\frac{1}{14400}\int_{-\infty}^\infty\int_{-\infty}^\infty \exp\left(-\frac{x_1^2+y_1^2}{2}\right)h_5(x_1)^2h_5(y_1)^2\cos(x_1y_1)dx_1dy_1 =\frac{49\sqrt{2}}{14400},
$$
implying~\eqref{eq:mixed power}.

It remains to prove~\eqref{eq:AB2}. Let $I$ denote the integral on the right hand side of~\eqref{eq:AB2}. The change of variable $u=x-\sqrt{2}A$ and $v=y-\sqrt{2}B$ yields the following identity:
\begin{equation}\label{eq:I}
I= e^{A^2+B^2}\int_{-\infty}^\infty\int_{-\infty}^\infty \exp\left(-\frac{u^2+v^2}{2}\right)
\cos\left(\left(u+\sqrt{2}A\right)\left(v+\sqrt{2}B\right)\right)dudv.
\end{equation}
Note that
\begin{multline}\label{eq:trig}
\cos\left(\left(u+\sqrt{2}A\right)\left(v+\sqrt{2}B\right)\right)=
\cos\left(u\left(v+\sqrt{2}B\right)\right)\cos\left(\sqrt{2}A\left(v+\sqrt{2}B\right)\right)\\-
\sin\left(u\left(v+\sqrt{2}B\right)\right)\sin\left(\sqrt{2}A\left(v+\sqrt{2}B\right)\right).
\end{multline}
Since $\int_{-\infty}^\infty e^{-u^2/2}\sin\left(u\left(v+\sqrt{2}B\right)\right)du=0$, it follows that
\begin{multline}\label{eq:two cos}
e^{-A^2-B^2}I\\\stackrel{\eqref{eq:I}\wedge\eqref{eq:trig}}{=}
\int_{-\infty}^\infty\int_{-\infty}^\infty \exp\left(-\frac{u^2+v^2}{2}\right)
\cos\left(u\left(v+\sqrt{2}B\right)\right)\cos\left(\sqrt{2}A\left(v+\sqrt{2}B\right)\right)dudv.
\end{multline}
Since (see equation (6.1.1) in~\cite{AAR99}) for all $x_1\in \R$ we
have
\begin{equation}\label{eq:fourier gaussian}
\int_{-\infty}^\infty e^{-y_1^2/2}\cos(x_1y_1)dy_1=\sqrt{2\pi}
e^{-x_1^2/2},
\end{equation}
equation~\eqref{eq:two cos} becomes
\begin{multline}\label{eq:I2}
e^{-A^2-B^2}I=\sqrt{2\pi}\int_{-\infty}^\infty
\exp\left(-\frac{v^2+\left(v+\sqrt{2}B\right)^2}{2}\right)\cos\left(\sqrt{2}A\left(v+\sqrt{2}B\right)\right)dv\\
= \sqrt{2\pi}\int_{-\infty}^\infty
\exp\left(-\frac{\left(\sqrt{2}v+B\right)^2+B^2}{2}\right)\cos\left(\sqrt{2}A\left(v+\sqrt{2}B\right)\right)dv.
\end{multline}
The change of variable $w=\sqrt{2}c+B$ in~\eqref{eq:I2} gives,
\begin{eqnarray*}
e^{-A^2-B^2}I&=&\sqrt{\pi}e^{-B^2/2}\int_{-\infty}^\infty e^{-w^2/2}\cos\left(A\left(w-B\right)\right)dw\\
&=& \sqrt{\pi}e^{-B^2/2}\int_{-\infty}^\infty e^{-w^2/2}\left(\cos\left(Aw\right)\cos(AB)+\sin(Aw)\sin(AB)\right)dw\\&\stackrel{\eqref{eq:fourier gaussian}}{=}&
\sqrt{2}\pi e^{-(A^2+B^2)/2}\cos(AB).
\end{eqnarray*}
This concludes the proof of~\eqref{eq:AB2}, and therefore the proof of Lemma~\ref{lem:computations} is complete.
\end{proof}

For $\eta\in (0,1)$ let $f_\eta:\R^2\to \{-1,1\}$ be given by
\begin{equation}\label{eq:def f eta}
f_\eta(x_1,x_2)\eqdef\left\{\begin{array}{ll}1& x_2\ge \eta
h_5(x_1),\\-1&x_2< \eta h_5(x_1).\end{array}\right.
\end{equation}
Note that since $h_5$ is odd, so is $f_\eta$ (almost surely). For $z\in \C$
with $|\Re(z)|<1$ we define
\begin{equation}\label{eq:def H eta}
H_\eta(z)\eqdef\frac{1}{2\pi(1-z^2)}\int_{\R^2\times
\R^2}f_\eta(x)f_\eta(y)\exp\left(\frac{-\|x\|_2^2-\|y\|_2^2+2z\langle
x,y\rangle}{1-z^2}\right)dxdy.
\end{equation}
\begin{lemma}\label{lem:analytic}
 $H_\eta$ is analytic on the strip
\begin{equation}\label{eq:defS}
\S\eqdef \left\{z\in \C:\ |\Re(z)|<1\right\}.\end{equation}
Moreover, for all $a+bi\in \S$ we have
\begin{equation}\label{eq:ab}
|H_\eta(a+bi)|\le
\frac{\pi\left((1+a)^2+b^2\right)\left((1-a)^2+b^2\right)}{2(1-a^2)\sqrt{(1-a^2)^2+b^4+2(1+a^2)b^2}}.
\end{equation}
\end{lemma}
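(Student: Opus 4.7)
The plan is to derive the pointwise bound \eqref{eq:ab} first, and deduce analyticity on $\S$ from that bound by a routine application of Morera's theorem. Throughout write $z=a+bi\in\S$ and $w\eqdef 1-z^2$. Direct computation gives $\Re w=1-a^2+b^2$, $\Im w=-2ab$, and
$$|w|^2=(1-a^2+b^2)^2+4a^2b^2=(1-a^2)^2+2(1+a^2)b^2+b^4,$$
which is exactly the quantity appearing under the square root in \eqref{eq:ab}.

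Since $-\|x\|_2^2-\|y\|_2^2$ and $\langle x,y\rangle$ are real, using $1/w=\bar w/|w|^2$ together with the elementary identity $\Re(\bar w z)=a(1-a^2-b^2)$, the real part of the exponent in \eqref{eq:def H eta} equals $-\alpha(\|x\|_2^2+\|y\|_2^2)+2\beta\langle x,y\rangle$, where
$$\alpha\eqdef \frac{1-a^2+b^2}{|w|^2},\qquad \beta\eqdef \frac{a(1-a^2-b^2)}{|w|^2}.$$
Combined with $|f_\eta|\le 1$ this gives
$$|H_\eta(z)|\le \frac{1}{2\pi|w|}\int_{\R^2\times\R^2}\exp\Bigl(-\alpha(\|x\|_2^2+\|y\|_2^2)+2\beta\langle x,y\rangle\Bigr)\,dx\,dy.$$
The quadratic form in the exponent splits across the two coordinate pairs $(x_i,y_i)\in\R^2$ for $i=1,2$, so this $4$-dimensional Gaussian integral factors as the square of the $2$-dimensional integral $\int_{\R^2}\exp(-\alpha(u^2+v^2)+2\beta uv)\,du\,dv=\pi/\sqrt{\alpha^2-\beta^2}$, valid whenever $\alpha>|\beta|$.

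The key algebraic step is the factoring
$$\alpha+\beta=\frac{(1-a)\bigl[(1+a)^2+b^2\bigr]}{|w|^2},\qquad \alpha-\beta=\frac{(1+a)\bigl[(1-a)^2+b^2\bigr]}{|w|^2},$$
both of which are strictly positive for $z\in\S$, so $\alpha>|\beta|$ holds automatically. Multiplying gives $\alpha^2-\beta^2=(1-a^2)\bigl[(1+a)^2+b^2\bigr]\bigl[(1-a)^2+b^2\bigr]/|w|^4$, so plugging into the bound above yields
$$|H_\eta(z)|\le \frac{1}{2\pi|w|}\cdot\frac{\pi^2|w|^4}{(1-a^2)\bigl[(1+a)^2+b^2\bigr]\bigl[(1-a)^2+b^2\bigr]}=\frac{\pi\bigl[(1+a)^2+b^2\bigr]\bigl[(1-a)^2+b^2\bigr]}{2(1-a^2)\,|w|},$$
where the last equality uses the (easily verified) identity $\bigl[(1+a)^2+b^2\bigr]\bigl[(1-a)^2+b^2\bigr]=|w|^2$. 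This is exactly \eqref{eq:ab}.

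Analyticity follows automatically. For each fixed $(x,y)$, the integrand in \eqref{eq:def H eta} is analytic in $z\in\S$ because $w$ has no zeros there. On any compact $K\subset\S$, both $\alpha-|\beta|$ and $1/|w|$ are bounded by continuity together with the positivity of $\alpha\pm\beta$, so the Gaussian function in $(x,y)$ obtained above is a $z$-uniform, integrable majorant on $K\times\R^2\times\R^2$. Fubini's theorem then permits interchanging orders in $\int_\gamma H_\eta(z)\,dz$ for any closed contour $\gamma\subset\S$, and the inner $z$-integral vanishes by Cauchy's theorem. Morera's theorem yields analyticity of $H_\eta$ on $\S$. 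The only nontrivial step is the algebraic factorization of $\alpha\pm\beta$; everything else is bookkeeping.
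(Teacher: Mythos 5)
Your proof is correct and follows essentially the same approach as the paper's: bound the integrand in modulus, evaluate the resulting real Gaussian integral, and simplify. The only cosmetic difference is in evaluating the Gaussian: you factor the four-dimensional integral across the coordinate pairs $(x_1,y_1)$, $(x_2,y_2)$ and use the determinant formula $\int_{\R^2}e^{-\alpha(u^2+v^2)+2\beta uv}\,du\,dv=\pi/\sqrt{\alpha^2-\beta^2}$, whereas the paper diagonalizes the quadratic form via the substitution $u=x+y$, $v=x-y$, noting that $\alpha\mp\beta=\Re\bigl(1/(1\pm z)\bigr)$; your factorization of $\alpha\pm\beta$ and your positivity check are equivalent to that observation. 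Your Morera--Fubini argument for analyticity makes explicit what the paper dispatches in a single sentence on absolute convergence, which is fine.
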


\begin{proof}Assume that $a\in (-1,1)$ and $b\in \R$, and write $z=a+bi$. Then
\begin{eqnarray}\label{eq:4-dim}
&&\!\!\!\!\!\!\!\!\!\!\!\!\!\!\!\!\!\!|H_\eta(z)|\nonumber\\\nonumber&\le&
\frac{1}{2\pi|1-z^2|}\int_{\R^2\times\R^2}\exp\left(-\Re\left(\frac{1}{1-z^2}\right)\left(\|x\|_2^2+\|y\|_2^2\right)+
2\Re\left(\frac{z}{1-z^2}\right)\langle
x,y\rangle\right)dxdy\\\nonumber
&=&\frac{1}{2\pi|1-z^2|}\int_{\R^2\times\R^2}\exp\left(-\frac12\Re\left(\frac{1}{1+z}\right)\|x+y\|_2^2
-\frac12\Re\left(\frac{1}{1-z}\right)\|x-y\|_2^2\right)dxdy\\
&=&\frac{1}{8\pi|1-z^2|}\int_{\R^2\times\R^2}\exp\left(-\frac12\Re\left(\frac{1}{1+z}\right)\|u\|_2^2
-\frac12\Re\left(\frac{1}{1-z}\right)\|v\|_2^2\right)dudv\\&=&
\frac{1}{8\pi|1-z^2|}\cdot\frac{(2\pi)^2}{\Re\left(\frac{1}{1-z}\right)\Re\left(\frac{1}{1+z}\right)}\nonumber
\\\label{eq:expression} &=&
\frac{\pi\left((1+a)^2+b^2\right)\left((1-a)^2+b^2\right)}{2(1-a^2)\sqrt{(1-a^2)^2+b^4+2(1+a^2)b^2}}
.
\end{eqnarray}
where in~\eqref{eq:4-dim} we used the change of variable $x+y=u$ and
$x-y=v$, whose Jacobian equals $\frac14$. Since the integral
defining $H_\eta$ is absolutely convergent on $\S$, the claim
follows.
\end{proof}


\begin{lemma}\label{lem: sanity check}
For every $z\in \C$ with $|\Re(z)|<1$ we have $H_0(z)=\arcsin(z)$.
\end{lemma}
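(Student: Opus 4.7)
The key observation is that $f_0(x_1,x_2)=\sign(x_2)$ almost everywhere, so $f_0(x)f_0(y)=\sign(x_2)\sign(y_2)$ depends only on the second coordinates. Consequently, the quadratic form in the exponent of~\eqref{eq:def H eta} separates, and the integrand factors as a product of one expression depending only on $(x_1,y_1)$ and one depending only on $(x_2,y_2)$. Since both $H_0$ and $\arcsin$ are analytic on the strip $\S$ defined in~\eqref{eq:defS} --- the former by Lemma~\ref{lem:analytic}, the latter because the branch cuts of $\arcsin$ lie in $(-\infty,-1]\cup[1,\infty)$, disjoint from $\S$ --- by the identity theorem it suffices to verify $H_0(t)=\arcsin(t)$ for real $t\in(-1,1)$.

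For such $t$, the $(x_1,y_1)$-factor is a pure Gaussian integral. The linear change of variables $u=x_1+y_1$, $v=x_1-y_1$ (with Jacobian $1/2$) diagonalizes the quadratic form in the exponent and reduces the integral to a product of two one-dimensional Gaussians, giving
\[
\int_{\R^2}\exp\!\left(\frac{-(x_1^2+y_1^2)+2tx_1y_1}{1-t^2}\right)dx_1dy_1=\pi\sqrt{1-t^2}.
\]
For the $(x_2,y_2)$-factor I would recognize the kernel $\frac{1}{\pi\sqrt{1-t^2}}\exp\!\left(\frac{-(x^2+y^2)+2txy}{1-t^2}\right)$ as the joint density of a centered bivariate Gaussian pair $(X,Y)$ of common variance $\tfrac12$ and correlation $t$. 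Thus the integral equals $\pi\sqrt{1-t^2}\cdot\E[\sign(X)\sign(Y)]$, and the classical Grothendieck identity yields $\E[\sign(X)\sign(Y)]=\frac{2}{\pi}\arcsin(t)$.

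Multiplying the two factors gives $2\pi(1-t^2)\arcsin(t)$, which cancels exactly against the prefactor $\frac{1}{2\pi(1-t^2)}$ in~\eqref{eq:def H eta} to leave $H_0(t)=\arcsin(t)$. Analytic continuation then extends the identity to all of $\S$. I do not anticipate any real obstacle: the proof is essentially a reduction to Grothendieck's identity, and the only care required is in tracking the normalization constants (which differ by a factor of $\pi/2$ from those in Definition~\ref{def:kri round}, owing to the prefactor $\frac{1}{2\pi(1-z^2)}$ chosen in~\eqref{eq:def H eta} rather than $\frac{1}{\pi^2(1-z^2)}$).
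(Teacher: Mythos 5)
Your proof is correct, and it uses the same essential decomposition as the paper: since $f_0(x)f_0(y)=\sign(x_2)\sign(y_2)$ and the quadratic form in the exponent of~\eqref{eq:def H eta} separates into an $(x_1,y_1)$-part and an $(x_2,y_2)$-part, the four-dimensional integral factors into a pure Gaussian integral times a sign-weighted Gaussian integral, and the prefactor conspires to leave $\arcsin(t)$. Where you diverge from the paper is in the evaluation of the sign-weighted factor: you recognize the kernel $\frac{1}{\pi\sqrt{1-t^2}}\exp\bigl(\frac{-(x^2+y^2)+2txy}{1-t^2}\bigr)$ as the density of a centered bivariate Gaussian with common variance $\tfrac12$ and correlation $t$ and invoke the classical Grothendieck identity $\E[\sign(X)\sign(Y)]=\frac{2}{\pi}\arcsin(t)$ as a known fact, whereas the paper rederives this from scratch --- it defines $\psi(z)=\int_{\R^2}\sign(u)\sign(v)e^{-u^2-v^2+2zuv}\,du\,dv$, passes to polar coordinates, and finds an explicit antiderivative to obtain $\psi(z)=\frac{2\arcsin(z)}{\sqrt{1-z^2}}$. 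The two are of course the same computation in disguise: the paper's polar-coordinate argument is exactly a proof of Grothendieck's identity. Your version is shorter and makes the conceptual content visible (this lemma is simply Grothendieck's identity plus a bookkeeping of the Gaussian normalization), at the cost of citing an external fact; the paper's version is self-contained. You also differ cosmetically in the linear change of variables used for the pure Gaussian factor ($u=x_1+y_1$, $v=x_1-y_1$ versus the paper's rescaling by $\sqrt{1-z^2}$), but both yield the same value after accounting for Jacobians. Finally, your remark about the factor $\pi/2$ relative to Definition~\ref{def:kri round} is correct: with the normalization of~\eqref{eq:h in intro} at $k=2$ one would get $\frac{2}{\pi}\arcsin(t)$, and the prefactor $\frac{1}{2\pi(1-z^2)}$ in~\eqref{eq:def H eta} is precisely $\frac{\pi}{2}$ times $\frac{1}{\pi^2(1-z^2)}$.
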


\begin{proof} It suffices to prove this for $z\in (0,1)$. Writing $x=(x_1,x_2),y=(y_1,y_2)\in \R^2$, we have
\begin{eqnarray}\label{eq:var change}
&&\!\!\!\!\!\!\!\!\!\!\!\!\!\!\!\!\!\!\!\!\!H_0(z)=\left(\frac{1}{2\pi(1-z^2)}\int_{\R\times
\R}\sign(x_2)\sign(y_2)\exp\left(\frac{-x_2^2-y_2^2+2z
x_2y_2}{1-z^2}\right)dx_1dy_1\right)\nonumber\\\nonumber
&&\quad\cdot\left(\int_{\R\times \R}\exp\left(\frac{-x_1^2-y_1^2+2z x_1y_1}{1-z^2}\right)dx_2dy_2\right)\\
&=& \frac{1-z^2}{2\pi} \left(\int_{\R\times \R}
\sign(u)\sign(v)e^{-u^2-v^2+2zuv}dudv\right)\left(\int_{\R\times \R}
e^{-(u-zv)^2-(1-z^2)v^2}dudv\right).
\end{eqnarray}
Now,
\begin{equation}\label{eq:easy int}
\int_{\R\times \R} e^{-(u-zv)^2-(1-z^2)v^2}dudv=\left(\int_\R
e^{-w^2}dw\right)\left(\int_\R
e^{-(1-z^2)v^2}dv\right)=\frac{\pi}{\sqrt{1-z^2}}.
\end{equation}
Define
$$
\psi(z)=\int_{\R\times \R}
\sign(u)\sign(v)e^{-u^2-v^2+2zuv}dudv=2\int_0^\infty\int_0^\infty
e^{-u^2-v^2}\left(e^{2zuv}-e^{-2zuv}\right)dudv.
$$
Passing to the polar coordinates $u=r\cos\theta$ and
$v=r\sin\theta$, and then making the change of variable
$r=\sqrt{s}$, we see that
\begin{eqnarray*}
\psi(z)&=&2\int_0^{\frac{\pi}{2}}\int_0^\infty
re^{-r^2}\left(e^{zr^2\sin(2\theta)}-e^{-zr^2\sin(2\theta)}\right)drd\theta\\&=&
\int_0^{\frac{\pi}{2}}\int_0^\infty
e^{-s}\left(e^{sz\sin(2\theta)}-e^{-sz\sin(2\theta)}\right)dsd\theta
\\&=&\int_0^{\frac{\pi}{2}} \left(\frac{1}{1-z\sin(2\theta)}-\frac{1}{1+z\sin(2\theta)}\right)d\theta\\&=&
\int_0^{\frac{\pi}{2}}
\frac{2z\sin(2\theta)}{1-z^2\sin^2(2\theta)}d\theta.
\end{eqnarray*}
Due to the identity
$$
\frac{d}{d\theta}\left(\frac{-1}{\sqrt{1-z^2}}
\arcsin\left(\frac{z\cos(2\theta)}{\sqrt{1-z^2\sin^2(2\theta)}}\right)\right)=\frac{2z\sin(2\theta)}
{1-z^2\sin^2(2\theta)},
$$
we conclude that
\begin{equation}\label{eq:psi compute}
\psi(z)=\frac{2\arcsin(z)}{\sqrt{1-z^2}}.
\end{equation}
Lemma~\ref{lem: sanity check} now follows from
substituting~\eqref{eq:easy int} and~\eqref{eq:psi compute}
into~\eqref{eq:var change}.
\end{proof}

\begin{theorem}\label{thm:konig}
There exists $\eta_0>0$ such that for all $\eta\in (0,\eta_0)$ we
have
$$
\frac{H_\eta(i)}{i}\in\left(
\log\left(1+\sqrt{2}\right),\infty\right).
$$
\end{theorem}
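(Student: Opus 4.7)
The plan is to write $H_\eta(i)/i = \Phi(\eta)/(4\pi)$, where $\Phi(\eta)\eqdef B_K(f_\eta,f_\eta)$ with $B_K$ the bilinear form~\eqref{eq:B_k}: since $f_\eta$ is odd (as $h_5$ is odd), the $\cos\langle x,y\rangle$--part of $e^{i\langle x,y\rangle}$ integrates to $0$ in~\eqref{eq:def H eta}, and $1-i^2=2$. Lemma~\ref{lem: sanity check} already gives $\Phi(0)=4\pi\log(1+\sqrt 2)$. The symmetry $f_{-\eta}(x_1,x_2)=f_\eta(-x_1,x_2)$ combined with the change of variables $(x_1,\tilde x_1)\mapsto(-x_1,-\tilde x_1)$ in the double integral defining $B_K$ forces $\Phi$ to be an even function of $\eta$, so every odd Taylor coefficient of $\Phi$ at $0$ vanishes automatically. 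It therefore suffices to show that the first nonvanishing even Taylor coefficient of $\Phi(\eta)-\Phi(0)$ is strictly positive.

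To access these coefficients I would first establish a Hermite--Plancherel formula
\[
B_K(f,g) \;=\; 2\pi\!\!\sum_{\substack{m,n\ge 0\\ m+n\text{ odd}}}\!\!(-1)^{(m+n-1)/2}\hat f_{mn}\,\hat g_{mn},\qquad \hat f_{mn}\eqdef\int_{\R^2}\! f(x)h_m(x_1)h_n(x_2)e^{-\|x\|_2^2}dx,
\]
which comes from expanding $e^{-(\|x\|_2^2+\|y\|_2^2)/2}e^{i\langle x,y\rangle}$ in the two--dimensional orthonormal Hermite basis via the identity $\int h_n(x)e^{-x^2/2}e^{ixy}dx = i^n\sqrt{2\pi}\,h_n(y)e^{-y^2/2}$ (Mehler at $r=i$, a consequence of~\eqref{eq:generating}) and taking imaginary parts. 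Integrating out $x_2$ in $\hat f_{\eta,mn}$ using $G_n(t)\eqdef\int\sign(x-t)h_n(x)e^{-x^2}dx$, so that $G_n(t)-G_n(0)=-2\int_0^t h_n(s)e^{-s^2}ds$, and Taylor expanding in $\eta$, one arrives at
\[
\hat f_{\eta,mn} \;=\; G_n(0)\pi^{1/4}\delta_{m0}+\eta A_{mn}+\eta^2 B_{mn}+\eta^3 D_{mn}+\eta^4 E_{mn}+O(\eta^5),
\]
whose coefficients are explicit in $h_n^{(j)}(0)$, $G_n(0)$, and the correlations $C^{(r)}_{m,5}\eqdef\int h_m h_5^r e^{-x^2}dx$. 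By orthonormality $C^{(1)}_{m,5}=\delta_{m5}$, and by parity of $h_5^r$, $A_{mn}=-2h_n(0)\delta_{m5}$ is supported at $m=5$, while $B_{mn},E_{mn}$ live on even $m$ and $D_{mn}$ on odd $m$.

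The $\eta^2$ coefficient of $\Phi$ splits into an ``$A^2$''-piece at $m=5$ and a cross piece $2\hat f_0 B$ at $m=0$; both reduce in closed form by differentiating $\sum_n i^n h_n(x)h_n(y)=(2\pi)^{-1/2}e^{(x^2+y^2)/2}e^{ixy}$ at $x=y=0$, and I expect them to evaluate to $4\sum_{n\text{ even}}(-1)^{n/2}h_n(0)^2=2\sqrt{2/\pi}$ and $-2\sum_{n\text{ odd}}(-1)^{(n-1)/2}G_n(0)h_n'(0)=-2\sqrt{2/\pi}$, which cancel. So $\Phi''(0)=0$, and the whole strict inequality hinges on $\Phi^{(4)}(0)$. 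Here the three subpieces --- $2\hat f_0 E$, $2AD$, and $B^2$ --- each factor as a product of one--variable Hermite sums, and this is the step where Lemma~\ref{lem:computations} becomes indispensable. The $B^2$ piece factors as $\bigl(\sum_{m\text{ even}}(-1)^{m/2}(C^{(2)}_{m,5})^2\bigr)\bigl(\sum_{n\text{ odd}}(-1)^{(n-1)/2}h_n'(0)^2\bigr)$; a second application of Mehler at $r=i$ identifies the first factor with $(2\pi)^{-1/2}\int\!\int h_5^2(x)h_5^2(y)\cos(xy)e^{-(x^2+y^2)/2}dx dy = 49/\sqrt\pi$ by~\eqref{eq:mixed power}, while the second equals $1/\sqrt{2\pi}$; and the pieces $2\hat f_0 E$ and $2AD$ each carry the factor $\int h_5^4 e^{-x^2}dx=4653/\sqrt\pi$ from~\eqref{eq:fourth power}. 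Combining the three contributions with their signs yields $\Phi^{(4)}(0)/4! = \bigl(\tfrac{23265}{6}-3102+\tfrac{49}{2}\bigr)\cdot 2\pi\cdot\tfrac{\sqrt 2}{\pi} = 1600\sqrt 2$, and hence $H_\eta(i)/i = \log(1+\sqrt 2)+\tfrac{400\sqrt 2}{\pi}\eta^4+O(\eta^6)$, proving the theorem. The hard part is precisely this positivity at fourth order: $h_1$ gives an identically zero perturbation (it is a rotation of the half--plane $f_0$) and $h_3$ already makes $\Phi''(0)<0$, so the choice of the \emph{fifth} Hermite polynomial is forced, and the residual positive sign comes down to the numerical cancellation $\tfrac{23265}{6}-3102+\tfrac{49}{2}=800>0$ built from the two specific integrals of Lemma~\ref{lem:computations}.
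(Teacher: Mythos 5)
Your proposal is correct in outline and reaches exactly the paper's answer, $\Phi^{(4)}(0)/4!=1600\sqrt2>0$ (paper: $\f''''(0)=38400\sqrt2$), but it follows a genuinely different computational route. The paper proves the theorem by writing $\f(\eta)=F(\eta,\eta)$ for a two-variable auxiliary integral and then differentiating under the integral sign; the formulas~\eqref{eq:''} and~\eqref{eq:''''} reduce everything to five mixed partials of $F$ at the origin, each of which is evaluated directly as a Gaussian integral (using~\eqref{eq:fourier gaussian} and~\eqref{eq:sin h5}) and then via Lemma~\ref{lem:computations}. You instead diagonalize $B_K$ via a Hermite--Plancherel identity, derived from the eigenfunction relation $\int h_n(x)e^{-x^2/2}e^{ixy}dx=i^n\sqrt{2\pi}\,h_n(y)e^{-y^2/2}$ (which is exactly the paper's~\eqref{eq:sin h5} for $n=5$), and then Taylor-expand the Hermite coefficients $\hat f_{\eta,mn}$ in $\eta$. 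Both routes meet at the two integrals of Lemma~\ref{lem:computations}, $\int h_5^4 e^{-x^2}=4653/\sqrt\pi$ and $\iint h_5^2(x)h_5^2(y)\cos(xy)e^{-(x^2+y^2)/2}=49\sqrt2$, so that lemma is the hard numerical input in either approach. What your version buys is conceptual transparency: the Plancherel form $\Phi(\eta)=2\pi\sum_{m+n\ \mathrm{odd}}(-1)^{(m+n-1)/2}\hat f_{\eta,mn}^2$ makes visible at a glance why $\Phi''(0)$ is a sign-alternating quadratic form in Hermite data (cf.\ the paper's Remark~\ref{rem:alpha beta}, where the constraint $a_k=(-1)^k b_k$ plays exactly this role), and in particular why $h_5$ works, $h_1$ produces the zero perturbation, and $\alpha=\beta=h_3$ fails the second-order constraint. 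What the paper's version buys is self-containment and a cleaner audit trail: every intermediate number appears as an explicit Gaussian moment, with no appeal to a Plancherel expansion or to the cancellation of a pair of Hermite series. One caveat: your second-order cancellation is stated as an expectation (``I expect them to evaluate to $\pm2\sqrt{2/\pi}$'') rather than derived; those two sums do indeed follow from differentiating the completeness relation $\sqrt{2\pi}\sum_m i^m h_m(x)h_m(y)e^{-x^2-y^2}=e^{-(x^2+y^2)/2}e^{ixy}$ at the origin, but that step should be written out, as should the precise coefficients in the $\eta^4$ bookkeeping ($\hat f^{(j)}(0)=j!\cdot\{A,B,D,E\}$, and the multinomial weights $2\hat f_0 E$, $8\cdot 6\,AD/24$, $6\cdot4\,B^2/24$, etc.), before the claim $\tfrac{23265}{6}-3102+\tfrac{49}{2}=800$ can be trusted on its own.
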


Theorem~\ref{thm:konig} implies that the answer to K\"onig's problem
is negative. Indeed,
$$
\frac{H_\eta(i)}{i}\stackrel{\eqref{eq:def H
eta}}{=}\frac{1}{4\pi}\int_{\R^2\times
\R^2}f_\eta(x)f_\eta(y)\exp\left(-\frac{\|x\|_2^2+\|y\|_2^2}{2}\right)\sin\left(\langle
x,y\rangle\right)dx dy =\frac{B_K(f_\eta,f_\eta)}{4\pi} .
$$
Since $\arcsin(i)=i\log\left(1+\sqrt{2}\right)$, it follows from
Lemma~\ref{lem: sanity check} and Theorem~\ref{thm:konig} that for
every $\eta\in (0,\eta_0)$ we have
$B_K(f_\eta,f_\eta)>B_K(f_0,f_0)$. Since $f_0(x_1,x_2)=\sign(x_2)$,
the claimed negative answer to K\"onig's problem follows.

\begin{proof}[Proof of Theorem~\ref{thm:konig}]
Define $\f(\eta)=4\pi H_\eta(i)/i$. The required result will follow
once we prove that
$
\f(\eta)=\f(0)+1600\sqrt{2}\eta^4+O\left(\eta^6\right)
$
as $\eta\to 0$. To this end, since $\f$ is even, it suffices to show that
$\f''(0)=0$ and $\f''''(0)=38400\sqrt{2}$.

Since $h_5$ is odd we have,
\begin{multline*}
\f(\eta)\\=4\int_{-\infty}^\infty\int_{-\infty}^\infty \int_{\eta
h_5(x_1)}^\infty\int_{\eta h_5(y_1)}^\infty
\exp\left(-\frac{x_1^2+x_2^2+y_1^2+y_2^2}{2}\right)\sin(x_1y_1+x_2y_2)dx_2dy_2dx_1dy_1.
\end{multline*}
Define $F:\R^2\to \R$ by
\begin{multline*}
F(u_1,u_2) \\\eqdef 4\int_{-\infty}^\infty\int_{-\infty}^\infty
\int_{u_1 h_5(x_1)}^\infty\int_{u_2 h_5(y_1)}^\infty
\exp\left(-\frac{x_1^2+x_2^2+y_1^2+y_2^2}{2}\right)\sin(x_1y_1+x_2y_2)dx_2dy_2dx_1dy_1,
\end{multline*}
so that $\f(\eta)=F(\eta,\eta)$. Since $F$ is symmetric, i.e.,
$F(u_1,u_2)=F(u_2,u_1)$, it follows that
\begin{equation}\label{eq:''}
\f''(0)=2\frac{\partial^2F}{\partial
u_1^2}(0,0)+2\frac{\partial^2F}{\partial u_1\partial u_2}(0,0),
\end{equation}
and
\begin{equation}\label{eq:''''}
\f''''(0)=2\frac{\partial^4F}{\partial
u_1^4}(0,0)+8\frac{\partial^4F}{\partial u_1\partial
u_2^3}(0,0)+6\frac{\partial^4F}{\partial u_1^2\partial u_2^2}(0,0).
\end{equation}
Now,
\begin{multline}\label{eq:pure der u1}
\frac{\partial F}{\partial
u_1}(u_1,u_2)=-4\int_{-\infty}^\infty\int_{-\infty}^\infty \int_{u_2
h_5(y_1)}^\infty\exp\left(-\frac{x_1^2+u_1^2h_5(x_1)^2+y_1^2+y_2^2}{2}\right)\\\cdot
h_5(x_1)\sin\left(x_1y_1+u_1 h_5(x_1) y_2\right)dy_2dx_1dy_1.
\end{multline}
By differentiation of~\eqref{eq:pure der u1} under the integral with
respect to $u_1$, we see that
\begin{eqnarray}\label{eq:2 der at 0}
\frac{\partial^2F}{\partial
u_1^2}(0,0)&=&-4\int_{-\infty}^\infty\int_{-\infty}^\infty
\int_{0}^\infty\exp\left(-\frac{x_1^2+y_1^2+y_2^2}{2}\right)h_5(x_1)^2y_2\cos(x_1y_1)dy_2dx_1dy_1\nonumber\\
&=&\nonumber-4\int_{-\infty}^\infty\int_{-\infty}^\infty
\exp\left(-\frac{x_1^2+y_1^2}{2}\right)h_5(x_1)^2\cos(x_1y_1)dx_1dy_1\\&\stackrel{\eqref{eq:fourier gaussian}}{=}&-4\sqrt{2\pi}\int_{-\infty}^\infty
e^{-x_1^2}h_5(x_1)^2dx_1\stackrel{\eqref{eq:orthogonality}}{=}
-4\sqrt{2\pi}.
\end{eqnarray}
By differentiation of~\eqref{eq:pure der u1} with respect to $u_2$
we see that
\begin{multline}\label{eq:mixed der}
\frac{\partial^2 F}{\partial u_1\partial
u_2}(u_1,u_2)=4\int_{-\infty}^\infty\int_{-\infty}^\infty
\exp\left(-\frac{x_1^2+u_1^2h_5(x_1)^2+y_1^2+u_2^2h_5(y_1)^2}{2}\right)\\\cdot
h_5(x_1)h_5(y_1)\sin\left(x_1y_1+u_1u_2 h_5(x_1)
h_5(x_2)\right)dx_1dy_1.
\end{multline}
Hence,
\begin{equation}\label{eq:sin hermite}
\frac{\partial^2 F}{\partial u_1\partial
u_2}(0,0)=4\int_{-\infty}^\infty\int_{-\infty}^\infty
\exp\left(-\frac{x_1^2+y_1^2}{2}\right)
h_5(x_1)h_5(y_1)\sin\left(x_1y_1\right)dx_1dy_1.
\end{equation}
By equation (6.1.15) of~\cite{AAR99} we have
\begin{equation}\label{eq:sin h5}
\int_{-\infty}^\infty
e^{-y_1^2/2}h_5(y_1)\sin(x_1y_1)dy_1=\sqrt{2\pi}
e^{-x_1^2/2}h_5(x_1).
\end{equation}
Hence,
\begin{equation}\label{computed''}
\frac{\partial^2 F}{\partial u_1\partial
u_2}(0,0)\stackrel{\eqref{eq:sin hermite}\wedge\eqref{eq:sin
h5}}{=}4\sqrt{2\pi}\int_{-\infty}^\infty
e^{-x_1^2}h_5(x_1)^2dx_1\stackrel{\eqref{eq:orthogonality}}{=}4\sqrt{2\pi}.
\end{equation}
By substituting~\eqref{eq:2 der at 0}
and~\eqref{computed''} into~\eqref{eq:''}, we see that $\f''(0)=0$.

We shall now proceed to compute $\f''''(0)$, using the identity~\eqref{eq:''''}. By differentiation of~\eqref{eq:pure der u1} under the integral with
respect to $u_1$ three times, we see that
\begin{eqnarray}\label{eq:fourth moment appears}
&&\!\!\!\!\!\!\!\!\!\!\!\!\!\!\!\nonumber\frac{\partial^4 F}{\partial u_1^4}(0,0)\\\nonumber&=&12\int_{-\infty}^\infty\int_{-\infty}^\infty\int_0^\infty
\exp\left(-\frac{x_1^2+y_1^2+y_2^2}{2}\right)\cos(x_1y_1)h_5(x_1)^4\left(y_2+\frac{y_2^3}{3}\right)dy_2dx_1dy_1
\\\nonumber&=&20 \int_{-\infty}^\infty\int_{-\infty}^\infty
\exp\left(-\frac{x_1^2+y_1^2}{2}\right)\cos(x_1y_1)h_5(x_1)^4dx_1dy_1\\
&\stackrel{\eqref{eq:fourier gaussian}}{=}&20\sqrt{2\pi}\int_{-\infty}^\infty e^{-x_1^2}h_5(x_1)^4 dx_1
\stackrel{\eqref{eq:fourth power}}{=}93060\sqrt{2}.
\end{eqnarray}
Now, by differentiation of~\eqref{eq:mixed der} under the integral with
respect to $u_2$ twice, we obtain the identity
\begin{multline}\label{eq:fourth again}
\frac{\partial^4F}{\partial u_1\partial
u_2^3}(0,0)=-4\int_{-\infty}^\infty\int_{-\infty}^\infty\exp\left(-\frac{x_1^2+y_1^2}{2}\right)
h_5(y_1)^3h_5(x_1)\sin(x_1y_1)dx_1dy_1\\
\stackrel{\eqref{eq:sin h5}}{=}-4\sqrt{2\pi}\int_{-\infty}^\infty e^{-y_1^2}h_5(y_1)^4dy_1
\stackrel{\eqref{eq:fourth power}}{=}-18612\sqrt{2}.
\end{multline}
Finally, by differentiation of~\eqref{eq:mixed der} under the integral once with
respect to $u_1$ and once with respect to $u_2$, we see that
\begin{equation}\label{eq:cos squares}
\frac{\partial^4F}{\partial u_1^2\partial u_2^2}(0,0)=4\int_{-\infty}^\infty\int_{-\infty}^\infty\exp\left(-\frac{x_1^2+y_1^2}{2}\right)h_5(x_1)^2h_5(y_1)^2
\cos(x_1y_1)dx_1dy_1\stackrel{\eqref{eq:mixed power}}{=}196\sqrt{2}.
\end{equation}
Hence,
\begin{equation*}
\f''''(0)\stackrel{\eqref{eq:''''}\wedge\eqref{eq:fourth moment appears}\wedge\eqref{eq:fourth again}\wedge\eqref{eq:cos squares}}{=} 38400\sqrt{2}. \qedhere
\end{equation*}
\end{proof}

\begin{remark}\label{rem:alpha beta}
{\em Clearly, we did not arrive at the above proof by guessing that the fifth Hermite polynomial $h_5$ is the correct choice in~\eqref{eq:def f eta}. We arrived at this choice as the simplest member of a general family of ways to perturb the function $(x_1,x_2)\mapsto \sign(x_2)$. Since carrying out the analysis of this perturbation procedure in full generality is quite tedious, we chose to present the shorter proof above. Neverthless, we would like to explain here how we arrived at the choice of $h_5$ in~\eqref{eq:def f eta}.

Fix two odd functions $\alpha,\beta:\R\to \R$,
and write their Hermite expansion as
$$
\alpha(x)=\sum_{k=0}^\infty a_k h_{2k+1}(x) \quad
\mathrm{and}\quad \beta(x)=\sum_{k=0}^\infty b_k h_{2k+1}(x).
$$
For $\eta>0$ define $f_\eta,g_\eta:\R^2\to \{-1,1\}$ by
\begin{equation}\label{eq:def feta geta}
f_{\eta}(x_1,x_2)= \left\{ \begin{array}{ll}1 & x_2\ge \eta\alpha(x_1),\\ -1
& x_2<\eta\alpha(x_1),\end{array}\right. \quad\mathrm{and}\quad
g_{\eta}(x_1,x_2)= \left\{ \begin{array}{ll}1 & x_2\ge \eta\beta(x_1),\\ -1
& x_2<\eta\beta(x_1).\end{array}\right.
\end{equation}
For $z\in \C$
with $|\Re(z)|<1$ define
\begin{equation*}
H_\eta(z)=\frac{1}{2\pi(1-z^2)}\int_{\R^2\times
\R^2}f_\eta(x)g_\eta(y)\exp\left(\frac{-\|x\|_2^2-\|y\|_2^2+2z\langle
x,y\rangle}{1-z^2}\right)dxdy.
\end{equation*}
Thus, our final choice~\eqref{eq:def H eta} corresponds to  $\alpha=\beta=h_5$.

As in the proof of Theorem~\ref{thm:konig}, define $\f(\eta)=4\pi H_\eta(i)/i$. In order to show that we have $\f(\eta)>\f(0)$ for small enough $\eta\in (0,1)$, we start by computing $\f''(0)$, which turns out to be given by the following formula:
\begin{equation}\label{eq:second der positive}
\f''(0) =
-4\sqrt{2\pi}\sum_{k=0}^\infty \left(a_k -(-1)^k b_k\right)^2\le 0.
\end{equation}
Therefore, since $\f$ is odd and hence its odd order derrivatives at $0$ vanish, in order for us to have a chance to prove that $\f(\eta)>\f(0)$ for small enough $\eta\in (0,1)$, we must have $\f''(0)=0$. Due to~\eqref{eq:second der positive}, in terms of Hermite coefficients this forces the constraints
\begin{equation}\label{eq:constraint}
\forall k\in \N\cup\{0\},\quad a_k=(-1)^k b_k.
\end{equation}

We can therefore at best hope that $\f(\eta)$ is greater than $\f(0)$ by a fourth order term, and we need to  compute $\f''''(0)$. This is possible to do, using some identities from~\cite{AGV82}. Denote for $a,b,c,d,k\in \mathbb N\cup\{0\}$,
$$L_k(a,b,c,d) \eqdef
\frac{\sqrt{2(2a+1)!(2b+1)!(2c+1)!(2d+1)!}}{(2k)!(a+b+1-k)!(c+d+1-k)!}\cdot
\binom{2k}{a-b+k}\binom{2k}{c-d+k},
$$
where we use the convention that $L_k(a,b,c,d)=0$
whenever there is a negative number in the above factorials or
binomial coefficients. A somewhat tedious computation that uses results from~\cite{AGV82} shows that if the constraint~\eqref{eq:constraint} is satisfied then
$$
\f''''(0)=8\sum_{\substack{(k,p,q,r,s)\in \mathbb N\cup\{0\}\\p+q+r+s\text{ is
even}}} a_p a_q a_r a_s L_k(p,q,r,s) \left(1 +
3(-1)^{k+r+s}\right).
$$

If, for simplicity, we want to make the choice $\alpha=\beta$, the
simplest solution of the constraints~\eqref{eq:constraint} comes
from taking $\alpha=\beta=h_{5}$ ($\alpha=\beta=h_1$ won't work
since then one can check that $\f(\eta)=\f(0)$ for all $\eta$).
Note, however, that $\alpha=-\beta=h_3$ would work here too. }
\end{remark}

\section{Proof that
$K_G<\frac{\pi}{2\log\left(1+\sqrt{2}\right)}$}\label{sec:win}

We will fix from now on some $\eta\in (0,\eta_0)$, where $\eta_0$ is as in Theorem~\ref{thm:konig}.
For $p\in [0,1]$ define
$$
F_p\eqdef (1-p)H_0+pH_\eta,
$$
where $H_\eta$ is as in~\eqref{eq:def H eta}.
In what follows we will denote the unit disc in $\C$ by
$$
\D\eqdef \left\{z\in \C:\  |z|< 1\right\}.
$$

\begin{theorem}\label{thm:abs}
The exists $p_0>0$ such that for all $p\in (0,p_0)$ we have
$F_p(\S)\supseteq \frac{9}{10}\D$ and $F_p^{-1}$ is well defined and
analytic on $\frac{9}{10}\D$. Moreover, if we write
$F_p^{-1}(z)=\sum_{k=1}^\infty a_k(p) z^k$ then there exists
$\gamma=\gamma_p\in [0,\infty)$ satisfying
\begin{equation}\label{eq:1}
\sum_{k=1}^\infty
|a_k(p)|\gamma^k=1,
\end{equation}
and
\begin{equation}\label{eq:c}
\gamma> \log\left(1+\sqrt{2}\right)=0.88137...
\end{equation}
\end{theorem}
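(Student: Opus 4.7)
I will prove Theorem~\ref{thm:abs} by controlling the quantity $\Gamma_p(\gamma):=\sum_{k\geq 1}|a_k(p)|\gamma^k$ and showing that $\Gamma_p(\gamma_0)<1$ strictly for small $p>0$, where $\gamma_0:=\log(1+\sqrt{2})$. Since $\Gamma_p$ is continuous, starts from $0$, and is strictly increasing in $\gamma$ on its disc of convergence, the existence of $\gamma_p$ satisfying~\eqref{eq:1} together with $\gamma_p>\gamma_0$ will follow from this estimate combined with a matching upper witness produced below.

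First I set up the inverse. Since $F_0=\arcsin$ is a biholomorphism from $\S$ onto $\arcsin(\S)$ with $F_0^{-1}=\sin$ entire, and the elementary bound $|\Re\sin(a+bi)|=|\sin a|\cosh b$ easily shows $\sin(\tfrac{9}{10}\D)\subset\S$, we have $\arcsin(\S)\supseteq\tfrac{9}{10}\D$. By Lemma~\ref{lem:analytic}, $H_\eta$ is analytic and uniformly bounded on compacta of $\S$, so $F_p=(1-p)H_0+pH_\eta$ is a small analytic perturbation of $F_0$ there. A Rouch\'e / holomorphic implicit function theorem argument provides, for all $p\in[0,p_0)$ with $p_0$ small enough, an analytic inverse $F_p^{-1}\colon\tfrac{9}{10}\D\to\S$ along with $F_p(\S)\supseteq\tfrac{9}{10}\D$. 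Since $F_p$ is odd with real Taylor coefficients, so is $F_p^{-1}$: $a_k(p)=0$ for even $k$, and each $a_k(p)$ depends analytically on $p$.

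For the existence of $\gamma_p$, set $\gamma^\star:=F_p(i)/i=(1-p)\gamma_0+p\,H_\eta(i)/i$. Theorem~\ref{thm:konig} gives $\gamma^\star>\gamma_0$, and the identity $F_p^{-1}(i\gamma^\star)=i$ yields $\Gamma_p(\gamma^\star)\geq|F_p^{-1}(i\gamma^\star)|=1$ by the triangle inequality. Thus, granting $\Gamma_p(\gamma_0)<1$, continuity and monotonicity of $\Gamma_p$ will provide $\gamma_p\in(\gamma_0,\gamma^\star]$.

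The crux is proving $\Gamma_p(\gamma_0)<1$. Writing $\sigma_{2j+1}:=(-1)^j=\sign(a_{2j+1}(0))$ (the sign pattern of the Taylor coefficients of $\sin$) and using $(i\gamma)^{2j+1}/i=(-1)^j\gamma^{2j+1}$, I get the decomposition
$$
\Gamma_p(\gamma_0) \;=\; \frac{F_p^{-1}(i\gamma_0)}{i} \;+\; 2\sum_{k\in\mathcal{B}(p)}|a_k(p)|\,\gamma_0^k,
$$
where $\mathcal{B}(p):=\{k\text{ odd}:\sign(a_k(p))\neq\sigma_k\}$. Differentiating $F_p(F_p^{-1}(w))=w$ at $p=0$ and using $\sin(i\gamma_0)=i$, $F_0'(i)=1/\sqrt{2}$, and $\cosh(\gamma_0)=\sqrt{2}$, one obtains
$$
\frac{\partial}{\partial p}\bigg|_{p=0}\!\frac{F_p^{-1}(i\gamma_0)}{i} \;=\; -\cosh(\gamma_0)\!\left(\frac{H_\eta(i)}{i}-\gamma_0\right) \;<\; 0
$$
by Theorem~\ref{thm:konig}, so $F_p^{-1}(i\gamma_0)/i=1-c_0\,p+O(p^2)$ for some $c_0>0$. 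For the bad-sum correction, Cauchy's estimates on $|z|=\tfrac{9}{10}$ give $|a_k(p)-a_k(0)|\leq M_p(10/9)^k$ with $M_p=O(p)$; since $|a_{2j+1}(0)|=1/(2j+1)!$ decays super-exponentially, a bad index $k$ must satisfy $(9/10)^k/k!\leq M_p$, forcing $k\geq k_0(p)\to\infty$ as $p\to 0$. Because $10\gamma_0/9<1$, the bound $|a_k(p)|\leq 2M_p(10/9)^k$ on $\mathcal{B}(p)$ combined with a geometric tail estimate gives
$$
\sum_{k\in\mathcal{B}(p)}|a_k(p)|\gamma_0^k \;\leq\; \frac{2M_p\,(10\gamma_0/9)^{k_0(p)}}{1-10\gamma_0/9} \;=\; o(p),
$$
hence $\Gamma_p(\gamma_0)=1-c_0\,p+o(p)<1$ for $p$ small enough. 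The main technical difficulty lies in this last step: the bad-sum correction must genuinely be $o(p)$ rather than only $O(p)$, which depends crucially on pushing the bad indices out to infinity as $p\to 0$ via the super-exponential decay of $1/k!$.
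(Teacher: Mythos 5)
Your proof is correct, and while it shares the essential ingredients with the paper's argument, the bookkeeping is genuinely different. The paper proceeds by contradiction: having established that $\gamma_p$ exists (via $\Gamma_p(9/10)>1$), it assumes $\gamma_p\le\gamma_0$, bounds the sign-mismatch quantity $1-F_p^{-1}(i\gamma_p)/i$ using the explicit second-order expansion in Lemma~\ref{lem:bound for cauchy} ($|F_p^{-1}(z)-\sin z-p\phi(z)|\le C_rp^2$) together with a fixed cutoff $n(\e)$ taming the tail of $\phi$'s Taylor coefficients, and finally uses a Lipschitz estimate on $F_p$ near $z=i$ to convert the bound on $F_p^{-1}(i\gamma_p)-i$ into $H_\eta(i)/i\le\gamma_0+L\e+LC_r'p$, contradicting Theorem~\ref{thm:konig}. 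You instead argue directly: you show $\Gamma_p(\gamma_0)<1$ by writing $\Gamma_p(\gamma_0)=F_p^{-1}(i\gamma_0)/i+2\sum_{\mathcal B(p)}|a_k(p)|\gamma_0^k$, identify the $-c_0p$ drop via the (correct) derivative computation $\partial_p|_{p=0}F_p^{-1}(i\gamma_0)/i=-\sqrt 2(H_\eta(i)/i-\gamma_0)$, supply the witness $\gamma^\star=F_p(i)/i>\gamma_0$ with $\Gamma_p(\gamma^\star)\ge|F_p^{-1}(i\gamma^\star)|=1$, and close with the intermediate value theorem. Your mechanism for making the sign-mismatch sum $o(p)$ is also distinct: any flipped index $k$ must satisfy $(9/10)^k/k!\le M_p=O(p)$, forcing $\min\mathcal B(p)\to\infty$ and a vanishing geometric tail (this crucially uses $10\gamma_0/9<1$), whereas the paper uses a fixed $n(\e)$ depending on the tail of $|c_{2k+1}|$; both give the same $o(p)$ conclusion. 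Your route avoids the contradiction scaffolding and the concluding Lipschitz step, which is a genuine simplification; the trade-off is that you invoke tacitly, rather than proving, the uniform $O(p)$ bound on $\|F_p^{-1}-\sin\|$ over a disc of radius just above $9/10$ and the uniform $O(p^2)$ control on the expansion of $F_p^{-1}(i\gamma_0)$ in $p$ — precisely the content of the paper's Lemmas~\ref{lem:rouche} and~\ref{lem:bound for cauchy}, which would need to be established to make your argument self-contained.
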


Assuming Theorem~\ref{thm:abs} for the moment, we will now deduce
Theorem~\ref{thm:win!}.

\begin{proof}[Proof of Theorem~\ref{thm:win!}]
Fix $p\in (0,p_0)$ and let $\gamma>0$ be the constant from
Theorem~\ref{thm:abs}. Due to~\eqref{eq:1}, $\sum_{k=1}^\infty
a_k(p)\gamma^k$ converges absolutely, and therefore $F_p^{-1}$ is
analytic and well defined on $\gamma\D$. For small enough $p$ some of the coefficients
$\{a_k(p)\}_{k=1}^\infty$ are negative (since the third Taylor coefficient of $H_0^{-1}(z)=\sin z$ is negative),
implying that for every $r\in [0,1]$ we have
\begin{equation}\label{eq:in strip}
F_p^{-1}(r\gamma)=\sum_{k=1}^\infty a_k(p)r^k\gamma^k\in (-1,1)\subseteq \S.
\end{equation}

Let $\H$ be a Hilbert space. Define two mappings
$$
L_p,R_p:\H\to \bigoplus_{k=1}^\infty \H^{\otimes k}\eqdef \mathcal K
$$
by
$$
L_p(x)\eqdef \sum_{k=1}^\infty \sqrt{|a_k(p)|}\gamma^{k/2}x^{\otimes k}\quad\mathrm{and}\quad R_p(x)\eqdef \sum_{k=1}^\infty \sign(a_k(p))\sqrt{|a_k(p)|}\gamma^{k/2}x^{\otimes k}.
$$
By~\eqref{eq:1}, if $\|x\|_{\H}=1$ then
$\|L_p(x)\|_\K=\|R_p(x)\|_\K=1$. Moreover, if $\|x\|_\H=\|y\|_\H=1$
then
\begin{equation}\label{eqLdot prodoct in strip}
\langle L_p(x),R_p(y)\rangle =\sum_{k=1}^\infty
a_k(p)\gamma^k\langle x,y\rangle^k =F_p^{-1}(\gamma\langle
x,y\rangle)\stackrel{\eqref{eq:in strip}}{\in} \S.
\end{equation}

For $N\in \N$ let $G:\R^N\to \R^2$ be a $2\times N$ random matrix
with i.i.d. standard Gaussian entries. Let $g_1,g_2\in \R^2$ be the
first two columns of $G$ (i.e., $g_1,g_2$ are i.i.d. standard two
dimensional Gaussian vectors). If $x,y\in \R^N$ are unit vectors
satisfying $\langle x,y\rangle\in \S$ then by rotation invariance we
have
\begin{eqnarray}\label{eq:gen groth}
&&\!\!\!\!\!\!\!\!\!\!\E\left[f_\eta\left(\frac{1}{\sqrt{2}}Gx\right)f_\eta\left(\frac{1}{\sqrt{2}}Gy\right)\right]=
\E\left[f_\eta\left(\frac{g_1}{\sqrt{2}}\right)f_\eta\left(\langle
x,y\rangle \frac{g_1}{\sqrt{2}}+\sqrt{1-\langle
x,y\rangle^2}\frac{g_2}{\sqrt{2}}\right)\right]\nonumber\\&=&\frac{1}{(2\pi)^2}\int_{\R^2\times
\R^2}f_\eta\left(\frac{u}{\sqrt{2}}\right) f_\eta\left(\langle x,y\rangle \frac{u}{\sqrt{2}}+\sqrt{1-\langle
x,y\rangle^2}\frac{v}{\sqrt{2}}\right)\exp\left(-\frac{\|u\|_2^2+\|v\|_2^2}{2}\right)dudv\nonumber\\&=&\frac{2}{\pi}H_\eta(\langle
x,y\rangle),
\end{eqnarray}
where we made the change of variable $u=\sqrt{2}u'$ and $v=\left(\sqrt{2}v'-\sqrt2\langle x,y\rangle u'\right)/\sqrt{1-\langle x,y\rangle^2}$, whose Jacobian is $4/(1-\langle x,y\rangle^2)$.

Fix an $m\times n$ matrix $A=(a_{ij})$ and let
$x_1,\ldots,x_m,y_1,\ldots,y_n\in \H$ be unit vectors satisfying
\begin{equation}\label{eq:Mmax}
\sum_{i=1}^m\sum_{j=1}^n a_{ij}\langle x_i,y_j\rangle=M\eqdef
\max_{u_1,\ldots,u_m,v_1,\ldots,v_n\in S_\H}\sum_{i=1}^m\sum_{j=1}^n a_{ij}\langle u_i,v_j\rangle,
\end{equation}
where $S_\H$ denotes the unit sphere of $\H$. Consider the unit
vectors $\{L_p(x_i)\}_{i=1}^m\cup\{R_p(y_j)\}_{j=1}^n$, which we can
think of as residing in $\R^N$ for $N=m+n$. By~\eqref{eqLdot prodoct
in strip} we have $\langle L_p(x_i),R_p(y_j)\rangle\in \S$ for all
$i\in \{1,\ldots,m\}$ and $j\in \{1,\ldots,n\}$, so that we may use
the identity~\eqref{eq:gen groth} for these vectors. Let $\lambda$
be a random variable satisfying $\Pr[\lambda=1]=p$,
$\Pr[\lambda=0]=1-p$. Assume that $\lambda$ is independent of $G$.
Define random variables
$\e_1,\ldots,\e_m,\delta_1,\ldots,\delta_n\in \{-1,1\}$ by
$$
\e_i=(1-\lambda) f_0\left(\frac{1}{\sqrt{2}}G L_p(x_i)\right)+\lambda f_\eta\left(\frac{1}{\sqrt{2}}G
L_p(x_i)\right)
$$
and
$$
 \delta_j=(1-\lambda) f_0\left(\frac{1}{\sqrt{2}}G
R_p(y_j)\right)+\lambda f_\eta\left(\frac{1}{\sqrt{2}}G R_p(y_j)\right).
$$

Then,
\begin{eqnarray*}
&&\!\!\!\!\!\!\!\!\!\!\!\!\!\!\!\!\!\!\!\!\!\!\!\!\!\!\!\!\max_{\sigma_1,\ldots,\sigma_m,\tau_1,\ldots,\tau_n\in
\{-1,1\}}\sum_{i=1}^m\sum_{j=1}^n a_{ij}
\sigma_i\tau_j\ge\E\left[\sum_{i=1}^m\sum_{j=1}^n a_{ij}
\e_i\delta_j\right]\\&\stackrel{\eqref{eq:gen
groth}}{=}&\frac{2}{\pi}\sum_{i=1}^m\sum_{j=1}^n
a_{ij}\Big((1-p)H_0\big(\langle
L_p(x_i),R_p(y_j)\rangle\big)+pH_\eta\big(\langle
L_p(x_i),R_p(y_j)\rangle\big)\Big)\\&=&
\frac{2}{\pi}\sum_{i=1}^m\sum_{j=1}^n a_{ij}F_p\big(\langle
L_p(x_i),R_p(y_j)\rangle\big)\\
&\stackrel{\eqref{eqLdot prodoct in
strip}}{=}&\frac{2}{\pi}\sum_{i=1}^m\sum_{j=1}^n
a_{ij}F_p\left(F_p^{-1}\gamma\langle
x_i,y_j\rangle\right)\stackrel{\eqref{eq:Mmax}}=
\frac{2\gamma}{\pi}M.
\end{eqnarray*}
This gives the bound $K_G\le
\frac{\pi}{2\gamma}\stackrel{\eqref{eq:c}}{<}\frac{\pi}{2\log\left(1+\sqrt{2}\right)}$,
as required.
\end{proof}

Our goal from now on will be to prove Theorem~\ref{thm:abs}.

\begin{lemma}\label{lem:1-1}
$H_0$ is one-to-one on $\S$ and $H_0(\S)\supseteq \D$.
\end{lemma}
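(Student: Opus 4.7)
My plan is to use the explicit formula $H_0 = \arcsin$ on $\S$ supplied by Lemma~\ref{lem: sanity check}, so the task reduces to two standard claims about the principal branch of $\arcsin$ (the one analytic on $\C\setminus((-\infty,-1]\cup[1,\infty))$ with $\arcsin(0)=0$): first, that $\arcsin$ is injective on $\S$, and second, that $\arcsin(\S)\supseteq \D$.

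For injectivity I would invoke the standard fact that $\sin$ restricts to a biholomorphism from the vertical strip $\{|\Re w|<\pi/2\}$ onto $\C\setminus((-\infty,-1]\cup[1,\infty))$ (the two vertical boundary lines map onto the cuts). Both cuts lie in $\{|\Re z|\ge 1\}$, so $\S$ is contained in the biholomorphism's range; hence the principal branch $\arcsin$ is well defined and injective on $\S$.

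For the surjectivity claim I would use the equivalent formulation $\sin(\D)\subseteq\S$: indeed, given this, for every $w\in\D$ we have $|\Re w|\le|w|<1<\pi/2$, so $w$ lies in the principal-branch range and $w=\arcsin(\sin w)\in\arcsin(\S)$. Writing $w=u+iv$, the inclusion $\sin(\D)\subseteq\S$ amounts to the real-variable inequality $|\sin u|\cosh v<1$ whenever $u^2+v^2<1$. Using $|\sin u|\le|u|$, it suffices to prove the sharper claim
\[
u^2\cosh^2 v<1\quad\text{on the open disc }u^2+v^2<1.
\]

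The main (and only slightly delicate) obstacle is this real-variable estimate. I would argue that $(u,v)\mapsto u^2\cosh^2 v$ is increasing in $|u|$ and $|v|$ throughout each quadrant, so its maximum on $\bar\D$ is attained on $\partial\D$, and by symmetry it suffices to examine the first-quadrant arc $u=\cos t$, $v=\sin t$, $t\in[0,\pi/2]$. Set $f(t)\eqdef \cos^2t\cdot\cosh^2(\sin t)$, so $f(0)=1$ and $f(\pi/2)=0$. A direct differentiation gives
\[
f'(t)=2\cos t\,\cosh(\sin t)\bigl[-\sin t\,\cosh(\sin t)+\cos^2 t\,\sinh(\sin t)\bigr],
\]
and, writing $s=\sin t\in(0,1)$, the inner bracket equals $-s\cosh s+(1-s^2)\sinh s$, which is negative since it amounts to $(1-s^2)\tanh s<s$, and this in turn follows from $\tanh s<s<s/(1-s^2)$. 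Thus $f$ is strictly decreasing on $(0,\pi/2)$, its maximum value $1$ on $\partial\D$ is attained only at the two boundary points $(\pm 1,0)$, and hence $u^2\cosh^2 v<1$ strictly throughout $\D$. This yields $|\Re\sin w|<1$ for every $w\in\D$, completing the proof.
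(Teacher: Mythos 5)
Your proposal is correct and follows essentially the same route as the paper: both reduce via $H_0=\arcsin$ and $\left|\Re\sin(u+iv)\right|=\left|\sin u\right|\cosh v$, then use $\left|\sin u\right|\le\left|u\right|$ to boil the containment $H_0(\S)\supseteq\D$ down to the elementary estimate $\left|u\right|\cosh v<1$ on the open unit disc. The only divergence is in how that estimate is verified — the paper bounds $\cosh$ by a Taylor polynomial and expands, while you parametrize the boundary circle and show monotonicity of $\cos^2 t\cosh^2(\sin t)$ via a derivative-sign computation — and both verifications are sound.
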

\begin{proof}
The fact that $H_0$ is one-to-one on $\S$ is a consequence of
Lemma~\ref{lem: sanity check}. To show that $H_0(\S)\supseteq \D$ we
need to prove that if $a,b\in \R$ and $a^2+b^2<1$ then
$|\Re\left(\sin\left(a+bi\right)\right)|<1$. Now,
\begin{equation}\label{eq:trig}
\left|\Re\left(\sin\left(a+bi\right)\right)\right|=\frac{e^{b}+e^{-b}}{2}|\sin a|.
\end{equation}
Using the inequality $|\sin a|\le|a|$, we see that it
suffices to show that  for all $x\in (0,1)$ we have
\begin{equation}\label{eq:root goal}
\frac{e^{x}+e^{-x}}{2}\sqrt{1-x^2}< 1
\end{equation}
By Taylor's formula we know that there exists $y\in [0,x]$ such that
\begin{equation}\label{eq:cosh}
\frac{e^x+e^{-x}}{2}=1+\frac{x^2}{2}+\frac{x^4}{24}\cdot\frac{e^{y}+e^{-y}}{2}\le
1+\frac{x^2}{2}+\frac{x^4}{24}\cdot\frac{e+e^{-1}}{2}<1+\frac{x^2}{2}+\frac{x^4}{12}.
\end{equation}
Note that
$$
\left(1+\frac{x^2}{2}+\frac{x^4}{12}\right)^2\left(1-x^2\right)=
1-\frac{7x^4}{12}-\frac{x^6}{3}-\frac{11x^8}{144}-\frac{x^{10}}{144}< 1,
$$
which together with~\eqref{eq:cosh} implies~\eqref{eq:root goal}.
\end{proof}

\begin{remark}
{\em A more careful analysis of the expression~\eqref{eq:trig} shows that
there exists $\e_0>0$ (e.g., $\e_0=0.05$ works) such that
$H_0(\S)\supseteq (1+\e_0)\D$. Since we will not need this stronger
fact here, we included the above simpler proof of a weaker statement.}
\end{remark}

\begin{lemma}\label{lem:rouche}
For every $r\in (0,1)$ there exists $p_r\in (0,1)$ and
a bounded open subset $\Omega_r\subseteq \S$ with
$\overline{\Omega_r}\subseteq \S$ such that for all $p\in (0,p_r)$
the function $F_p$ is one-to-one on $\Omega_r$ and $F_p(\Omega_r)=
r\D$. Thus $F_p^{-1}$ is well defined and analytic on $r\D$.
\end{lemma}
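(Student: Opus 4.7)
The name of the lemma and the decomposition $F_p = H_0 + p(H_\eta - H_0)$ clearly point to a Rouché-type perturbation argument. My plan is to take $\Omega_r$ to be the preimage under $H_0 = \arcsin$ of a disc slightly larger than $r\D$, and then use Rouché's theorem to transfer the mapping properties of $H_0$ onto $F_p$ for all sufficiently small $p$.

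Concretely, fix an intermediate radius $r' \in (r, 1)$, e.g., $r' = (1+r)/2$, and set $\Omega_r \eqdef H_0^{-1}(r'\D) = \sin(r'\D)$. By Lemma \ref{lem: sanity check} together with Lemma \ref{lem:1-1}, $H_0 = \arcsin$ is a biholomorphism from $\Omega_r$ onto $r'\D$ with inverse $\sin$, so $\Omega_r$ is open and bounded, and $|H_0(z)| = r'$ on $\partial\Omega_r$. To verify $\overline{\Omega_r}\subset \S$, I would use the identity $|\Re\sin(a+bi)| = |\sin a|\cosh b$ and check by a routine planar computation on the compact disc $\{a^2+b^2 \le r'^2\}$ that this quantity stays strictly below $1$.

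Since $H_\eta - H_0$ is analytic on $\S$ (by Lemma \ref{lem:analytic}), it is bounded on the compact set $\overline{\Omega_r}$; let $M$ denote this bound and set $p_r \eqdef (r'-r)/M$. For $p \in (0,p_r)$ and any $w \in r\D$, every $z \in \partial\Omega_r$ satisfies
\begin{equation*}
|H_0(z) - w| \ge r' - |w| > r' - r > pM \ge |F_p(z) - H_0(z)|.
\end{equation*}
Rouché's theorem then yields that $F_p - w$ has exactly as many zeros in $\Omega_r$ as $H_0 - w$, namely one. Hence $F_p|_{\Omega_r}$ is injective onto an image containing $r\D$, and passing to $\Omega_r\cap F_p^{-1}(r\D)$ (a harmless $p$-dependent shrinking of $\Omega_r$) yields $F_p(\Omega_r) = r\D$ on the nose. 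Analyticity of $F_p^{-1}\colon r\D \to \Omega_r$ is then automatic, since any holomorphic bijection between planar open sets is a biholomorphism.

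The only point of real substance is the verification $\overline{\sin(r'\D)} \subset \S$, which is the sort of elementary but slightly delicate one-variable inequality one resolves on a scratch sheet; everything else is a textbook application of Rouché's theorem to a small uniform perturbation of a known biholomorphism.
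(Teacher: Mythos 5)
Your proof is correct and takes essentially the same route as the paper: both hinge on Rouché's theorem applied to the small perturbation $F_p - \zeta = (H_0 - \zeta) + p(H_\eta - H_0)$ on the boundary of a fixed compact envelope in $\S$ where $H_\eta$ is bounded (uniformly in small $p$) and $H_0 - \zeta$ is bounded away from zero. The only difference is cosmetic: the paper uses a rectangle $E_{n+1}=\{|\Re z|<1-\tfrac{1}{n+1},\,|\Im z|<n+1\}$ chosen so that $H_0(E_n)\supseteq r\overline{\D}$, while you take $\Omega_r=\sin(r'\D)$, which makes the boundary estimate slightly cleaner since $|H_0|\equiv r'$ on $\partial\Omega_r$; your verification that $\overline{\sin(r'\D)}\subseteq\S$ is precisely what the computation inside Lemma~\ref{lem:1-1} provides, and the harmless $p$-dependent shrinking to $F_p^{-1}(r\D)$ is also how the paper closes its argument.
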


\begin{proof}
For $n\in \N$ consider the set
$$
E_n=\left\{z\in \C:\ |\Re(z)|< 1-\frac{1}{n}\ \wedge\ |\Im(z)|<
n\right\}.
$$
Using Lemma~\ref{lem:1-1}, fix a large enough $n\in \N$ so that
$H_0(E_n)\supseteq r\overline{\D}$.  The bound~\eqref{eq:ab} implies that there
exists $M>0$ such that $|H_\eta(z)|\le M$ for all $\eta>0$ and $z\in
\partial E_{n+1}$.  By Lemma~\ref{lem:1-1}, $H_0$ takes a value
$\zeta\in r\overline{\D}$ exactly once on $E_{n+1}$, and this occurs at some point in
$E_n$. Hence,
$$
m\eqdef\min_{\stackrel{ \zeta\in r\D}{z\in \partial E_{n+1}}} |H_0(z)-\zeta|>0.
$$
Define $p_r=m/(2M)$.

Fix $\zeta\in r\D$. If $p\in (0,p_r)$ then for every $z\in \partial
E_{n+1}$ we have
$$
\left|p\left(H_\eta(z)-H_0(z)\right)\right|<\frac{m}{2M}\left(|H_\eta(z)|+|H_0(z)|\right)\le
m\le \left|H_0(z)-\zeta\right|.
$$
Rouch\'e's theorem now implies that the number of zeros of
$H_0-\zeta$ in $E_{n+1}$ is the same as the number of zeros of
$H_0-\zeta+p\left(H_\eta-H_0\right)=F_p-\zeta$ in $E_{n+1}$. Hence
$F_p$ takes the value $\zeta$ exactly once in $E_{n+1}$. Since
$\zeta$ was an arbitrary point in $r\D$, we can define
$\Omega_r=F_p^{-1}(r\D)$.
\end{proof}

\begin{lemma}\label{lem:bound for cauchy} For every $r\in (0,1)$
there exists $C_r\in (0,\infty)$ such that, using the notation of
Lemma~\ref{lem:rouche}, for every $p\in (0,p_r)$ and $z\in r\D$ we
have
\begin{equation}\label{eq:perturbed taylor}
\left|F_p^{-1}(z)-\sin z-p\left(z-H_\eta(\sin z)\right)\cos
z\right|\le C_rp^2.
\end{equation}
\end{lemma}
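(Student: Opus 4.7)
The plan is to treat $F_p^{-1}(z)$ as a jointly analytic function of $(p,z)$, identify the first two Taylor coefficients of its expansion in $p$ at $p=0$ as $\sin z$ and $(z-H_\eta(\sin z))\cos z$ respectively, and then bound the quadratic tail by a Cauchy estimate.

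The first step is to observe that the Rouch\'e argument in the proof of Lemma~\ref{lem:rouche} depends only on $|p|$, so its conclusion persists for \emph{complex} $p$ with $|p|<p_r$: for each such $p$, $F_p$ takes each $\zeta\in r\D$ exactly once in $E_{n+1}$. Via the Cauchy integral representation
\begin{equation*}
F_p^{-1}(z)=\frac{1}{2\pi i}\oint_{\partial E_{n+1}} \frac{w\,F_p'(w)}{F_p(w)-z}\,dw,
\end{equation*}
this furnishes joint analyticity of $F_p^{-1}$ on the polydisc $\{|p|<p_r\}\times r\D$, together with the uniform bound $F_p^{-1}(z)\in E_{n+1}$. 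Consequently
\begin{equation*}
g(p,z)\eqdef F_p^{-1}(z)-\sin z-p\bigl(z-H_\eta(\sin z)\bigr)\cos z
\end{equation*}
is analytic in both variables and uniformly bounded by some finite $M_r$ on this polydisc.

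Next I would compute the first two Taylor coefficients of $p\mapsto g(p,z)$ at $p=0$. Since $F_0=H_0=\arcsin$ by Lemma~\ref{lem: sanity check}, one has $F_0^{-1}(z)=\sin z$, so $g(0,z)=0$. Differentiating the defining identity $F_p(F_p^{-1}(z))=z$ in $p$ at $p=0$, using $F_p=H_0+p(H_\eta-H_0)$ together with $H_0(\sin z)=z$ and $H_0'(\sin z)=1/\cos z$, yields
\begin{equation*}
\partial_p F_p^{-1}(z)\big|_{p=0}=\bigl(z-H_\eta(\sin z)\bigr)\cos z,
\end{equation*}
which exactly cancels the subtracted linear term, so $\partial_p g(0,z)=0$.

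It remains to convert these vanishing conditions into the stated uniform estimate. For fixed $z\in r\D$ expand $g(p,z)=\sum_{n\ge 0} c_n(z)p^n$; the above gives $c_0(z)=c_1(z)=0$, and Cauchy's estimates on $\{|p|\le p_r/2\}$ give $|c_n(z)|\le M_r(2/p_r)^n$. Summing the geometric series yields $|g(p,z)|\le 8 M_r p^2/p_r^2$ on $(0,p_r/4]$, while on $[p_r/4,p_r)$ the trivial bound $|g(p,z)|\le M_r\le 16 M_r p^2/p_r^2$ suffices, so $C_r\eqdef 16 M_r/p_r^2$ works. The only mildly delicate step is verifying that the Rouch\'e argument of Lemma~\ref{lem:rouche} survives the passage to complex $p$ (so that $F_p^{-1}$ is jointly analytic); after that, the argument is pure implicit differentiation combined with standard Cauchy estimates.
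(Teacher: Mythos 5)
Your proof is correct, and it takes a genuinely different route from the paper's. Both arguments start from the implicit-differentiation computation of $\partial_p F_p^{-1}(z)|_{p=0}=(z-H_\eta(\sin z))\cos z$. After that they diverge. The paper stays with real $p$: it differentiates the implicit identity a second time to get an explicit formula for $\partial_p^2 F_p^{-1}(z)$ involving $H_0, H_\eta$, their $z$-derivatives, and $\partial_z F_p^{-1}, \partial_z^2 F_p^{-1}$; it then bounds the latter two by Cauchy estimates on the circle of radius $(1+r)/2$, bounds everything uniformly in $p\in(0,p_r)$, and closes with Taylor's theorem with Lagrange remainder in $p$. You instead observe that the Rouch\'e inequality in Lemma~\ref{lem:rouche} uses $p$ only through $|p|$, so it extends verbatim to complex $p$ with $|p|<p_r$; together with the Cauchy integral representation of the inverse over $\partial E_{n+1}$ this yields joint analyticity and a uniform bound for $F_p^{-1}(z)$ on the polydisc, after which the vanishing of the $0$th and $1$st Taylor coefficients of $g(p,z)$ in $p$ plus a Cauchy estimate in $p$ finish the job. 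Your route avoids entirely the messy explicit formula for $\partial_p^2 F_p^{-1}$ and the auxiliary Cauchy estimates in $z$, at the modest cost of checking that Rouch\'e persists for complex $p$ (which is immediate, since the proof only compares moduli) and that the resulting zero is simple (which follows because Rouch\'e preserves the count with multiplicity and that count is $1$). One small point worth making explicit in your write-up: the term $H_\eta(\sin z)$ is well-defined and bounded on $\overline{r\D}$ because $\sin(r\overline\D)\subseteq\S$ by Lemma~\ref{lem:1-1} and $H_\eta$ is analytic on $\S$ by Lemma~\ref{lem:analytic}; this is needed for the uniform bound $M_r$ on $g$.
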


\begin{proof}
Note that
\begin{equation}\label{eq:for diff}
z=F_p\left(F_p^{-1}(z)\right)=(1-p)H_0\left(F_p^{-1}(z)\right)+pH_\eta\left(F_p^{-1}(z)\right).
\end{equation}
By differentiating~\eqref{eq:for diff} with respect to $p$, we see
that
\begin{eqnarray*}
0&=&H_\eta\left(F_p^{-1}(z)\right)-H_0\left(F_p^{-1}(z)\right)\\&&\quad+\left(\frac{d}{dp}F_p^{-1}(z)\right)
\left((1-p)\frac{d H_0}{dz}\left(F_p^{-1}(z)\right)+p\frac{dH_\eta}{dz}\left(F_p^{-1}(z)\right)\right)\\
&=&H_\eta\left(F_p^{-1}(z)\right)-H_0\left(F_p^{-1}(z)\right)+\left(\frac{d}{dp}F_p^{-1}(z)\right)
\frac{dF_p}{dz}\left(F_p^{-1}(z)\right)\\&=&H_\eta\left(F_p^{-1}(z)\right)-H_0\left(F_p^{-1}(z)\right)
+\frac{\frac{d}{dp}F_p^{-1}(z)}{\frac{d}{dz}\left(F_p^{-1}(z)\right)}.
\end{eqnarray*}
Hence,
\begin{equation}\label{eq:first der}
\frac{d}{dp}F_p^{-1}(z)=\left(H_0\left(F_p^{-1}(z)\right)-H_\eta\left(F_p^{-1}(z)\right)\right)
\frac{d}{dz}\left(F_p^{-1}(z)\right).
\end{equation}
If we now differentiate~\eqref{eq:first der} with respect to $p$,
while using~\eqref{eq:first der} whenever the term
$\frac{d}{dp}F_p^{-1}(z)$ appears,  we obtain the following
identity.
\begin{eqnarray}\label{eq:seconf der}
&&\!\!\!\!\!\!\!\!\!\!\!\!\!\!\!\!\!\!\!\!\!\nonumber\frac{d^2}{dp^2}F_p^{-1}(z)\\\nonumber&=&\left[\frac{d
H_0}{dz}\left(F_p^{-1}(z)\right)-\frac{d
H_\eta}{dz}\left(F_p^{-1}(z)\right)+\left(H_0\left(F_p^{-1}(z)\right)-H_\eta\left(F_p^{-1}(z)\right)\right)
\frac{d^2}{dz^2}\left(F_p^{-1}(z)\right)\right]\\
&&\quad\cdot
\left(H_0\left(F_p^{-1}(z)\right)-H_\eta\left(F_p^{-1}(z)\right)\right)
\frac{d}{dz}\left(F_p^{-1}(z)\right).
\end{eqnarray}
Take $M=M_r>0$ such that for all $w\in \Omega_r$ we have
\begin{equation}\label{eq:M}
\max\left\{|H_0(w)|,|H_\eta(w)|,\left|\frac{dH_0}{dz}(w)\right|,\left|\frac{dH_\eta}{dz}(w)\right|\right\}\le
M.
\end{equation}
Note that~\eqref{eq:M} applies to $w=F_p^{-1}(z)$ for $z\in r\D$. We
also define
$$
R=R_r=\max_{w\in \partial \Omega_{(1+r)/2}} |w|.
$$
Then for $\zeta\in \frac{1+r}{2}\D$ we have
$\left|F_p^{-1}(\zeta)\right|\le R$. If $z\in r\D$ then by the
Cauchy formula we have
$$
\left|\frac{d}{dz}\left(F_p^{-1}(z)\right)\right|=\left|\frac{1}{\pi(r+1)i}\oint_{\frac{1+r}{2}\partial
\D}\frac{F_p^{-1}(\zeta)}{(\zeta-z)^2}d\zeta\right|\le
\max_{\zeta\in \frac{1+r}{2}\partial
\D}\frac{\left|F_p^{-1}(\zeta)\right|}{(|\zeta|-|z|)^2}\le
\frac{4R}{(1-r)^2}.
$$
Similarly,
$$
\left|\frac{d^2}{dz^2}F_p^{-1}(z)\right|=\left|\frac{2}{\pi(r+1)i}\oint_{\frac{1+r}{2}\partial
\D}\frac{F_p^{-1}(\zeta)}{(\zeta-z)^3}d\zeta\right|\le
\frac{16R}{(1-r)^3}.
$$
These estimates, in conjunction with the identity~\eqref{eq:seconf
der}, imply the following bound:
$$
\left|\frac{d^2}{dp^2}F_p^{-1}(z)\right|\le\left(2M+2M\frac{16R}{(1-r)^3}\right)2M\frac{4R}{(1-r)^2}.
$$
By the Taylor formula we deduce that
$$
\left|F_p^{-1}(z)-F_0^{-1}(z)-p\left.\frac{d}{dp}F_p^{-1}(z)\right|_{p=0}\right|\le
C_rp^2,
$$
where
$C_r=\frac{8M^2R}{(1-r)^2}\left(1+\frac{16R}{(1-r)^3}\right)$. It
remains to note that due to Lemma~\ref{lem: sanity check} and the
identity~\eqref{eq:first der}, we have
$\left.\frac{d}{dp}F_p^{-1}(z)\right|_{p=0}=\left(z-H_\eta(\sin
z)\right)\cos z$.
\end{proof}

\begin{proof}[Proof of Theorem~\ref{thm:abs}] We will fix from now on
some $r\in (9/10,1)$.
Note that since the Hermite polynomial $h_5$ is odd, so is $H_\eta$.
Hence also $F_p$ is odd, and therefore $a_k(p)=0$ for even $k$. For
$z\in r\D$ write $\phi(z)=\left(z-H_\eta(\sin z)\right)\cos z$.
Consider the power series expansions
$$
\sin z=\sum_{k=0}^\infty
b_{2k+1} z^{2k+1}=\sum_{k=0}^\infty
\frac{(-1)^k}{(2k+1)!} z^{2k+1},
$$ and
\begin{equation}\label{eq:c series}
\phi(z)=\sum_{k=0}^\infty c_{2k+1}
z^{2k+1}.
\end{equation}

By the Cauchy formula we have for every $k\in
\N\cup\{0\}$,
\begin{equation}\label{eq:all coef}
\left|a_{2k+1}(p)-b_{2k+1}-pc_{2k+1}\right|= \left|\frac{1}{2\pi i
r}\oint_{r\partial \D}\frac{F_p^{-1}(z)-\sin z-
p\phi(z)}{z^{2k+2}}dz\right|\stackrel{\eqref{eq:perturbed
taylor}}{\le} \frac{C_rp^2}{r^{2k+2}}.
\end{equation}
Note that by Lemma~\ref{lem:analytic} the radius of convergence of
the series in~\eqref{eq:c series} is at least $1$, and therefore
$\sum_{k=0}^\infty |c_{2k+1}|(9/10)^{2k+1}<\infty$. Hence,
\begin{multline}\label{eq:gamma exists}
\sum_{k=0}^\infty
|a_{2k+1}(p)|\left(\frac{9}{10}\right)^{2k+1}\stackrel{\eqref{eq:all
coef}}{\ge} \sum_{k=0}^\infty
\frac{(9/10)^{2k+1}}{(2k+1)!}-p\sum_{k=0}^\infty
|c_{2k+1}|\left(\frac{9}{10}\right)^{2k+1}-\frac{C_rp^2}{r}\sum_{k=0}^\infty
\left(\frac{9}{10r}\right)^{2k+1}\\=\frac{e^{9/10}-e^{-9/10}}{2}-O(p)>1.02-O(p).
\end{multline}

By continuity, it follows from~\eqref{eq:gamma exists} that provided
$p$ is small enough there exists $\gamma>0$ satisfying the
identity~\eqref{eq:1}. Our goal is to prove~\eqref{eq:c}, so assume
for contradiction that $\gamma\le\log\left(1+\sqrt{2}\right)<9/10$.
Note that since  $r\in (9/10,1)$ we have
\begin{equation}\label{eq:49/50}
\frac{\gamma}{r}\le
\frac{10\log\left(1+\sqrt{2}\right)}{9}<\frac{49}{50}.
\end{equation}

Fix $\e>0$ that will be determined later. We have seen in Lemma~\ref{lem:1-1} that $\sin\left(\frac{9}{10}\D\right)\subseteq \S$. Since $H_\eta$ is analytic on $\S$, it follows that $\phi$ is analytic on $\frac{9}{10}\D$. Since $\gamma<9/10$, there exists $n\in \N$ satisfying
\begin{equation}\label{eq:n}
\sum_{k=n+1}^\infty |c_{2k+1}|\gamma^{2k+1}<\frac{\e}{2}.
\end{equation}
There exists
$p=p(\e)$ such that for all $p\in (0,p(\e))$ we have
$p|c_{2k+1}|<\frac12 |b_{2k+1}|$ for all $k\in \{0,\ldots,n\}$. In
particular, we have
$\sign(b_{2k+1}+pc_{2k+1})=\sign(b_{2k+1})=(-1)^k$. Now,
\begin{multline}\label{eq:past to first order}
\left|1-\frac{F_p^{-1}(i\gamma)}{i}\right|\stackrel{\eqref{eq:1}}{=}\left|\sum_{k=0}^\infty
\left(|a_{2k+1}(p)|-(-1)^ka_{2k+1}(p)\right)\gamma^{2k+1}\right|\\\stackrel{\eqref{eq:all coef}}{\le}
\sum_{k=0}^\infty
\left||b_{2k+1}+pc_{2k+1}|-(-1)^k\left(b_{2k+1}+pc_{2k+1}\right)\right|\gamma^{2k+1}+2\sum_{k=0}^\infty
\frac{C_rp^2}{r^{2k+2}}\gamma^{2k+1}.
\end{multline}
To estimate the two terms on the right hand side on~\eqref{eq:past to first order}, note first that
\begin{equation}\label{eq:Cr'}
2\sum_{k=0}^\infty
\frac{C_rp^2}{r^{2k+2}}\gamma^{2k+1}\stackrel{\eqref{eq:49/50}}{\le} \frac{2C_r}{r}p^2\sum_{k=0}^\infty\left(\frac{49}{50}\right)^{2k+1}\le C_r'p^2,
\end{equation}
where $C_r'$ depends only on $r$.
Since $p\in (0,p(\e))$ we know that for all $k\in \{0,\ldots,n\}$ we have  $|b_{2k+1}+pc_{2k+1}|=(-1)^k\left(b_{2k+1}+pc_{2k+1}\right)$. Hence the first $n$ terms of the first sum in the right hand side of~\eqref{eq:past to first order} vanish. Therefore,
\begin{multline}\label{eq:use n}
\sum_{k=0}^\infty
\left||b_{2k+1}+pc_{2k+1}|-(-1)^k\left(b_{2k+1}+pc_{2k+1}\right)\right|\gamma^{2k+1}\\=
\sum_{k=n+1}^\infty \left||b_{2k+1}+pc_{2k+1}|-|b_{2k+1}|-(-1)^kpc_{2k+1}\right|\gamma^{2k+1}\le 2p\sum_{k=n+1}^\infty |c_{2k+1}|\gamma^{2k+1}\stackrel{\eqref{eq:n}}{<}p\e.
\end{multline}
By substituting~\eqref{eq:Cr'} and~\eqref{eq:use n} into~\eqref{eq:past to first order}, we see that if we define $\beta=F^{-1}_p(i\gamma)-i$ then
\begin{equation}\label{eq:beta}
|\beta|\le C_r'p^2+p\e.
\end{equation}
Let $L_0$ be the Lipschitz constant of $H_0$ on $i+\frac12
\D\subseteq \S$ (the disc of radius $\frac12$ centered at $i$).
Similarly let $L_\eta$ be the Lipschitz constant of $H_\eta$ on
$i+\frac12 \D$, and set $L=\max\{L_0,L_\eta\}$. It follows that
$F_p=(1-p)H_0+pH_\eta$ is $L$-Lipschitz on $i+\frac12 \D$. Due
to~\eqref{eq:beta}, if $p$ is small enough then $i+\beta\in
i+\frac12 \D$, and therefore,
\begin{multline*}
\log\left(1+\sqrt{2}\right)\ge\gamma=\frac{F_p(\beta+i)}{i}\ge
\frac{F_p(i)}{i}-L|\beta|\stackrel{\eqref{eq:beta}}{\ge}\frac{(1-p)H_0(i)+pH_\eta(i)}{i}-Lp\left(C_r'p+\e\right)\\=
(1-p)\log\left(1+\sqrt{2}\right)+p\frac{H_\eta(i)}{i}-Lp\left(C_r'p+\e\right).
\end{multline*}
This simplifies to give the following estimate:
$$
\frac{H_\eta(i)}{i}\le \log\left(1+\sqrt{2}\right)+ LC_r'p+L\e.
$$
Since this is supposed to hold for all $\e>0$ and $p\in (0,p(\e))$, we arrive at a contradiction to
Theorem~\ref{thm:konig}.
\end{proof}

\begin{remark}\label{rem:inspection}
{\em An inspection of the proof of Theorem~\ref{thm:win!} shows that
the only property of $H_\eta$ that was used is that it is a
counterexample to K\"onig's problem. In other words, assume that
$f,g:\R^2\to\{-1,1\}$ are measurable functions and consider the
function $H:\S\to\C$ given by
\begin{equation*}
H(z)=\frac{1}{2\pi(1-z^2)}\int_{\R^2\times
\R^2}f(x)g(y)\exp\left(\frac{-\|x\|_2^2-\|y\|_2^2+2z\langle
x,y\rangle}{1-z^2}\right)dxdy.
\end{equation*}
Assume that $B_K(f,g)>4\pi \log\left(1+\sqrt{2}\right)$, where $B_K$
is K\"onig's bilinear form given in~\eqref{eq:B_k}. Then one can
repeat the proof of Theorem~\ref{thm:win!} with $H_\eta$ replaced by
$H$, arriving at the same conclusion.}
\end{remark}

\begin{conjecture}\label{conj:no p}
Recalling Definition~\ref{def:kri round} and Corollary~\ref{cor:krivine}, we conjecture that for small enough $\eta\in (0,1)$, the pair of functions $f=g=f_\eta:\R^2\to \{-1,1\}$ is a Krivine rounding scheme for which we have $c(f_\eta,f_\eta)>\frac{2}{\pi}\log\left(1+\sqrt{2}\right)$. In other words, we conjecture that in order to prove Theorem~\ref{thm:win!} we do not need to use a convex combination of $H_0$ and $H_\eta$ as we did above, but rather use only $H_\eta$ itself.
\end{conjecture}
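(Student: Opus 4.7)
The plan is to mimic the proof of Theorem~\ref{thm:win!}, substituting $H_\eta$ for the convex combination $F_p=(1-p)H_0+pH_\eta$ and treating $\eta^2$ as the small parameter in place of $p$. Writing $H_\eta^{-1}(z)=\sum_{k\ge 0}\alpha_{2k+1}(\eta)z^{2k+1}$, the task reduces to showing that the unique $\gamma(\eta)>0$ determined by $\sum_{k\ge 0}|\alpha_{2k+1}(\eta)|\gamma(\eta)^{2k+1}=1$ satisfies $\gamma(\eta)>\log(1+\sqrt{2})$ for all sufficiently small $\eta>0$; since $H_{f_\eta,f_\eta}=(2/\pi)H_\eta$ gives $c(f_\eta,f_\eta)=(2/\pi)\gamma(\eta)$, this is exactly what the conjecture asserts.

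First I would show, via a Rouch\'e argument parallel to Lemma~\ref{lem:rouche}, that $H_\eta^{-1}$ extends analytically to a disk $r\D$ with $r\in(9/10,1)$ fixed, uniformly in all sufficiently small $\eta$, yielding the Cauchy bound $|\alpha_{2k+1}(\eta)|\le Mr^{-(2k+1)}$. Using that $H_\eta$ is even in $\eta$ (from the symmetry $(x_2,y_2)\mapsto(-x_2,-y_2)$ combined with $f_\eta(x_1,-x_2)=-f_{-\eta}(x_1,x_2)$), I would then derive the uniform expansion $H_\eta^{-1}(z)=\sin z+\eta^2\psi_2(z)+\eta^4\psi_4(z)+O(\eta^6)$ on $r\D$. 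An explicit computation starting from the distributional identity $\partial_\eta f_\eta|_{\eta=0}=-2h_5(x_1)\delta(x_2)$ gives $\psi_2(z)=-h_2(\sin z)\cos z$, where $h_2$ admits an explicit formula as a sum of Gaussian integrals involving $h_5$, and a Hermite-eigenfunction computation parallel to the proof of Theorem~\ref{thm:konig} yields $h_2(i)=0$, hence $\psi_2(iC_0)=0$ for $C_0:=\log(1+\sqrt{2})$.

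The core step proceeds by contradiction: assume $\gamma(\eta)\le C_0$. As in Theorem~\ref{thm:abs}, since $\alpha_{2k+1}(\eta)\in\R$, one has
\[
1-\frac{H_\eta^{-1}(i\gamma(\eta))}{i}=2\sum_{k:\,(-1)^k\alpha_{2k+1}(\eta)<0}|\alpha_{2k+1}(\eta)|\gamma(\eta)^{2k+1}=:\delta(\eta)\ge 0,
\]
so that $H_\eta^{-1}(i\gamma(\eta))=i+\beta$ with $|\beta|=\delta(\eta)$. Lipschitz continuity of $H_\eta$ near $i$ with constant $L$ (uniform for small $\eta$ by analyticity) then gives $\gamma(\eta)\ge H_\eta(i)/i-L\delta(\eta)$, while the asymptotic expansion at the end of the proof of Theorem~\ref{thm:konig} reads $H_\eta(i)/i=C_0+\tfrac{400\sqrt{2}}{\pi}\eta^4+O(\eta^6)$. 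Provided $\delta(\eta)=o(\eta^4)$, this yields $\gamma(\eta)>C_0$, contradicting the assumption.

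The hard part will be establishing $\delta(\eta)=o(\eta^4)$. A naive estimate based only on $|\alpha_{2k+1}(\eta)-(-1)^k/(2k+1)!|=O(\eta^2r^{-(2k+1)})$ places sign flips at indices $k$ with $(2k+1)!\gtrsim r^{2k+1}/\eta^2$ and yields only $\delta(\eta)=O(\eta^{2+o(1)})$, which is too weak. Overcoming this seems to require one of two refinements: (i) proving that the Taylor coefficients of each corrector $\psi_{2j}$ inherit the alternating sign pattern of those of $\sin z$, so that \emph{no} sign flips occur and $\{f_\eta,f_\eta\}$ is in fact an \emph{alternating} Krivine rounding scheme (making the conjecture immediate from~\eqref{eq:alternating} and Theorem~\ref{thm:konig}); or (ii) exploiting cancellations arising from the explicit structure $\psi_2(z)=-h_2(\sin z)\cos z$ and the vanishing $h_2(i)=0$, together with the analogous formula for $\psi_4$, to sharpen $\delta(\eta)$ directly. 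Route (i) is the cleaner statement but amounts to a delicate sign claim about all Taylor coefficients of an explicit yet complicated analytic function; I expect this to be the central difficulty.
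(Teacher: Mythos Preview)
The statement you are attempting to prove is \emph{Conjecture~\ref{conj:no p}} in the paper; it is explicitly left open and the paper contains no proof of it. There is therefore no ``paper's own proof'' to compare your attempt against. The authors introduced the convex combination $F_p=(1-p)H_0+pH_\eta$ precisely because they could not carry out the argument with $H_\eta$ alone, and they record as a conjecture that this workaround should be unnecessary.

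Your write-up is a reasonable \emph{program} modeled on the paper's proof of Theorem~\ref{thm:abs}, not a proof. You yourself identify the gap: you need $\delta(\eta)=o(\eta^4)$, and the naive estimate based on $|\alpha_{2k+1}(\eta)-(-1)^k/(2k+1)!|=O(\eta^2 r^{-(2k+1)})$ yields only $\delta(\eta)=\eta^{2+o(1)}$. This is a genuine obstruction, not a technicality. In the paper's argument the analogous quantity $|\beta|$ is controlled to order $O(p^2)+p\varepsilon$ (Lemma~\ref{lem:bound for cauchy} and the estimate~\eqref{eq:beta}), and the point is that the gain $H_\eta(i)/i-\log(1+\sqrt{2})$ is of order $p$, so a bound $|\beta|=O(p^2)+p\varepsilon$ suffices. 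When you replace $p$ by $\eta^2$, the gain becomes of order $\eta^4$ (because $\varphi''(0)=0$ forces the leading perturbation of $H_\eta(i)/i$ to be quartic), so you need $\delta(\eta)=o(\eta^4)$, which is a much stronger requirement with no analogue in the paper's argument.

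Of your two proposed routes, route~(i)---showing that $\{f_\eta,f_\eta\}$ is actually an \emph{alternating} Krivine rounding scheme, i.e.\ that $\mathrm{sign}(\alpha_{2k+1}(\eta))=(-1)^k$ for all $k$---would of course settle the conjecture immediately via~\eqref{eq:alternating} and Theorem~\ref{thm:konig}, but this is an infinite family of sign conditions on the Taylor coefficients of $H_\eta^{-1}$, and nothing in the paper suggests a mechanism for proving it. Route~(ii) would need to extract an extra factor of $\eta^2$ from $\delta(\eta)$ by exploiting the vanishing $\psi_2(iC_0)=0$; but $\delta(\eta)$ is a sum over indices where sign flips occur, and those flips are governed by the full Taylor expansion of $\psi_2$ (and higher $\psi_{2j}$), not merely its value at $iC_0$, so it is unclear how the pointwise vanishing helps. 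In short: your outline is sound as a strategy, but the step you flag as ``the hard part'' is exactly the open content of the conjecture, and you have not closed it.
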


\section{Proof of Theorem~\ref{thm:KTJ}}\label{sec:KTJ}

For  the final equality
in~\eqref{eq:KT}, see equation (2.4) in~\cite{Kon00}. Denote
\begin{multline}\label{eq:def M}
M\eqdef \sup_{\substack{f,g\in L_\infty(0,\infty)\\ f,g\in [-1,1]\
\mathrm{a.e.}}} \int_{0}^\infty\int_0^\infty
f(x)g(y)\exp\left(-\frac{x^2+y^2}{2}\right)\sin(xy)dx
dy\\=\sup_{\substack{f,g\in L_\infty(0,\infty)\\ f,g\in \{-1,1\}\
\mathrm{a.e.}}} \int_{0}^\infty\int_0^\infty
f(x)g(y)\exp\left(-\frac{x^2+y^2}{2}\right)\sin(xy)dx dy.
\end{multline}
Theorem~\ref{thm:KTJ} will follow once we show that $M$ is attained
when $f=g=1$ or $f=g=-1$. Note that the supremum in~\eqref{eq:def M}
is attained at some $f,g:(0,\infty)\to \{-1,1\}$. Indeed, let $\mu$
be the measure on $(0,\infty)$ with density $x\mapsto e^{-x^2/2}$.
If $f_n,g_n:(0,\infty)\to [-1,1]$ satisfy $ \lim_{n\to\infty}
\int_{0}^\infty\int_0^\infty
f_n(x)g_n(y)\exp\left(-\frac{x^2+y^2}{2}\right)\sin(xy)dx dy=M$ then
by passing to a subsequence we may assume that there are $f,g\in
L_2(\mu)$ such that $f_n$ converges weakly to $f$ and $g_n$
converges weakly to $g$ in $L_2(\mu)$. Then $f,g\in [-1,1]$ a.e. and
by weak convergence we have $\int_{0}^\infty\int_0^\infty
f(x)g(y)\exp\left(-\frac{x^2+y^2}{2}\right)\sin(xy)dx dy=M$. A
simple extreme point argument shows that we may also assume that
$f,g\in \{-1,1\}$.

Assume from now on that $f,g:(0,\infty)\to \{-1,1\}$ are maximizers
of~\eqref{eq:def M}, i.e.,
\begin{equation}\label{eq:maximizer}
M= \int_{0}^\infty\int_0^\infty
f(x)g(y)\exp\left(-\frac{x^2+y^2}{2}\right)\sin(xy)dxdy.
\end{equation}
This implies that for almost every $x\in (0,\infty)$ we have
\begin{equation}\label{eq:f=sign}
f(x)=\sign\left(\int_0^\infty g(y)e^{-y^2/2}\sin(xy)dy\right),
\end{equation}
and
\begin{equation}\label{eq:g=sign}
 g(x)=\sign\left(\int_0^\infty
f(y)e^{-y^2/2}\sin(xy)dy\right).
\end{equation}
Consequently,
\begin{multline}\label{eq:absolute fg}
M=\int_0^\infty \left|\int_0^\infty
g(y)e^{-y^2/2}\sin(xy)dy\right|e^{-x^2/2}dx\\=\int_0^\infty
\left|\int_0^\infty f(y)e^{-y^2/2}\sin(xy)dy\right|e^{-x^2/2}dx.
\end{multline}

\begin{lemma}\label{lem:LipI}
Define $I:[0,\infty)\to \R$ by
\begin{equation}\label{eq:def I}
I(y)\eqdef \int_0^\infty
f(x)\exp\left(-\frac{x^2+y^2}{2}\right)\sin(xy)dx.
\end{equation}
Then $I$ is $\sqrt{2}$-Lipschitz.
\end{lemma}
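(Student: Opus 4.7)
The plan is to prove that $I$ is differentiable on $[0,\infty)$ and to bound $|I'(y)|$ pointwise by $\sqrt{2}$. First I would justify differentiating under the integral sign: the Gaussian factor $e^{-x^2/2}$ dominates both the integrand of $I(y)$ and its formal $y$-derivative uniformly in $y$ on compact sets, using $|f|\le 1$. This yields
$$
I'(y) = \int_0^\infty f(x)\, e^{-(x^2+y^2)/2}\bigl(x\cos(xy) - y\sin(xy)\bigr)\, dx.
$$

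The key step is a Cauchy--Schwarz estimate in $L^2\bigl((0,\infty),\, e^{-x^2/2}dx\bigr)$, obtained by splitting the integrand as $\bigl[f(x)e^{-x^2/4}\bigr]\cdot\bigl[e^{-x^2/4 - y^2/2}(x\cos(xy)-y\sin(xy))\bigr]$:
$$
|I'(y)|^2 \le \Bigl(\int_0^\infty f(x)^2 e^{-x^2/2}\,dx\Bigr) \Bigl(\int_0^\infty e^{-x^2/2 - y^2}\bigl(x\cos(xy)-y\sin(xy)\bigr)^2\,dx\Bigr).
$$
Since $|f|\le 1$, the first factor is at most $\int_0^\infty e^{-x^2/2}\,dx = \sqrt{\pi/2}$. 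For the second factor I would invoke the elementary orthogonal-rotation identity
$$
(x\cos\theta - y\sin\theta)^2 + (x\sin\theta + y\cos\theta)^2 = x^2+y^2,
$$
which gives the pointwise bound $(x\cos(xy) - y\sin(xy))^2 \le x^2 + y^2$. A standard Gaussian moment computation ($\int_0^\infty e^{-x^2/2}dx = \sqrt{\pi/2}$ and $\int_0^\infty x^2 e^{-x^2/2}dx = \sqrt{\pi/2}$) then yields
$$
\int_0^\infty e^{-x^2/2 - y^2}\bigl(x\cos(xy)-y\sin(xy)\bigr)^2\,dx \le e^{-y^2}\sqrt{\pi/2}\,(1+y^2).
$$

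Combining these estimates I obtain $|I'(y)|^2 \le \frac{\pi}{2}(1+y^2)e^{-y^2}$. I would finish by checking that $(1+y^2)e^{-y^2}$ is maximized at $y=0$ on $[0,\infty)$: its derivative is $2ye^{-y^2}\bigl(1-(1+y^2)\bigr) = -2y^3 e^{-y^2}\le 0$, so the maximum value is $1$. Therefore $|I'(y)|\le \sqrt{\pi/2} < \sqrt{2}$, and the $\sqrt{2}$-Lipschitz bound follows from the fundamental theorem of calculus.

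I do not foresee a real obstacle: the two Cauchy--Schwarz inequalities (one against $f$ with the Gaussian weight, the other for the vector pairing $(x,-y)\cdot(\cos(xy),\sin(xy))$) interact cleanly with the Gaussian weight, and the resulting Lipschitz constant $\sqrt{\pi/2}\approx 1.253$ has some slack below the claimed $\sqrt{2}\approx 1.414$. The only minor subtlety is verifying the differentiation-under-the-integral step, which is standard.
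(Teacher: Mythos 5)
Your proof is correct, and it takes a genuinely different route from the paper's. Both arguments start from the same formula for $I'(y)$ and both exploit the pointwise bound $\bigl(x\cos(xy)-y\sin(xy)\bigr)^2\le x^2+y^2$ coming from the rotation identity. After that the methods diverge. The paper works entirely with the triangle inequality: it bounds $|I'(y)|\le\int_0^\infty e^{-(x^2+y^2)/2}\sqrt{x^2+y^2}\,dx$, then uses $\sqrt{x^2+y^2}\le\sqrt{2}\max\{x,y\}$ to split the integral at $x=y$, obtaining $|I'(y)|\le\sqrt{2}\,e^{-y^2/2}\bigl(y^2+e^{-y^2/2}\bigr)$, and finishes with the elementary inequality $e^{-t}(2t+e^{-t})\le 1$. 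You instead apply Cauchy--Schwarz in $L^2\bigl((0,\infty),e^{-x^2/2}\,dx\bigr)$, which turns the $|f|\le 1$ hypothesis into the Gaussian mass $\sqrt{\pi/2}$ and reduces the other factor to a second-moment computation; the final optimization of $(1+y^2)e^{-y^2}$ is then trivial since the derivative has a fixed sign. Your route is arguably cleaner (no ad hoc splitting or comparison inequality) and gives the sharper constant $\sqrt{\pi/2}\approx 1.253$, which would serve equally well downstream in Lemma \ref{lem:small const int}; the paper's argument is more elementary (only pointwise estimates, no $L^2$ duality) and lands exactly on $\sqrt{2}$, which is all that is needed.
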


\begin{proof} The Lipschitz condition follows from the following
simple estimate on $|I'|$:
\begin{eqnarray}\label{eq:root of 2}
\nonumber |I'(y)| &\le& \int_0^\infty
\exp\left(-\frac{x^2+y^2}{2}\right)\left|x\cos(xy)-y\sin(xy)\right|dx
\\&\le& \nonumber \int_0^\infty
\exp\left(-\frac{x^2+y^2}{2}\right)\sqrt{x^2+y^2}dx\\
&\le&\nonumber
\sqrt{2}e^{-y^2/2}\int_0^\infty e^{-x^2/2}\max\{x,y\}dx\\
\nonumber&=&\sqrt{2}e^{-y^2/2}\left(\int_0^ye^{-x^2/2}ydx+\int_y^\infty
e^{-x^2/2}xdx\right)\\&\le&
\sqrt{2}e^{-y^2/2}\left(y^2+e^{-y^2/2}\right)\le \sqrt{2},
\end{eqnarray}
where in the last inequality in~\eqref{eq:root of 2} we used the
bound $e^{-t}(2t+e^{-t})\le 1$, which holds for all $t\ge 0$ since
$e^{t}-e^{-t}=\sum_{k=0}^\infty \frac{2t^{2k+1}}{(2k+1)!}\ge 2t$.
\end{proof}

\begin{lemma}\label{lem:y0}
For every $z\in [2/5,4/3]$ we have
\begin{equation}\label{eq:integratedI}
\int_{z-\frac14}^{z+\frac14}
\int_0^\infty\exp\left(-\frac{x^2+y^2}{2}\right)\sin(xy)dx dy>
\frac{3}{20}.
\end{equation}
\end{lemma}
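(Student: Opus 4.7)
The plan is to reduce the double integral to a one-dimensional integral, then analyze a scalar function. First, I would evaluate the inner $x$-integral in closed form. Setting $F(y) \eqdef \int_0^\infty e^{-x^2/2}\sin(xy)\,dx$, a single integration by parts gives the linear ODE $F'(y)+yF(y)=1$ with $F(0)=0$, whose unique solution is $F(y)=e^{-y^2/2}\int_0^y e^{t^2/2}\,dt$. Consequently the quantity appearing in~\eqref{eq:integratedI} equals $G(z) \eqdef \int_{z-1/4}^{z+1/4} h(y)\,dy$, where $h(y)\eqdef e^{-y^2}\int_0^y e^{t^2/2}\,dt$, and the claim becomes $G(z)>3/20$ for every $z\in [2/5,4/3]$.

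Next I would establish the unimodality of $h$ on $[0,\infty)$. Writing $\phi(y)\eqdef\int_0^y e^{t^2/2}\,dt$ so that $h(y)=e^{-y^2}\phi(y)$, a direct computation gives $h'(y)=e^{-y^2/2}-2yh(y)$, and the equation $h'(y)=0$ simplifies to $2y\phi(y)=1$. Since $y\mapsto 2y\phi(y)$ is strictly increasing on $[0,\infty)$, equals $0$ at $0$, and tends to $+\infty$, there is a unique root $y^{\ast}\in(0,\infty)$, and $h$ is strictly increasing on $[0,y^{\ast}]$ and strictly decreasing on $[y^{\ast},\infty)$. Evaluating at $y=1/2$ and $y=1$ pins down $y^{\ast}\in(1/2,1)$.

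Using this, I would show that $G$ itself is unimodal on the relevant range. Since $G'(z)=h(z+1/4)-h(z-1/4)$, unimodality of $h$ immediately gives $G'(z)>0$ when $z+1/4\le y^{\ast}$ and $G'(z)<0$ when $z-1/4\ge y^{\ast}$. In the intermediate regime $z-1/4<y^{\ast}<z+1/4$, the function $z\mapsto h(z-1/4)$ is strictly increasing while $z\mapsto h(z+1/4)$ is strictly decreasing, so $G'$ is strictly decreasing there and can have at most one zero. Hence $G$ increases and then decreases on $[2/5,4/3]$, so $\min_{z\in [2/5,4/3]} G(z)=\min\{G(2/5),G(4/3)\}$.

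Finally, I would verify the two endpoint inequalities $G(2/5)>3/20$ and $G(4/3)>3/20$ by explicit numerical estimation with guaranteed error, e.g.\ composite Simpson's rule applied to $h$ on $[3/20,13/20]$ and on $[13/12,19/12]$ with an a priori bound on $\lVert h^{(4)}\rVert_\infty$ extracted from repeated differentiation of $h'(y)=e^{-y^2/2}-2yh(y)$, or equivalently by substituting the truncated Taylor series of $e^{t^2/2}$ into $\phi(y)$ with an explicit remainder. The main obstacle I anticipate is the tightness of the constant $3/20$: the value of $G$ at $z=4/3$ exceeds $3/20$ only by a small margin (a few hundredths), so the quadrature error must be controlled carefully; modulo this bookkeeping, the estimate is routine.
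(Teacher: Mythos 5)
Your route differs from the paper's in a genuinely interesting way. You solve the inner integral exactly via the ODE $F'+yF=1$, obtaining $h(y)=e^{-y^2}\int_0^y e^{t^2/2}\,dt$, and then argue unimodality of $h$ and hence of $G$ on $[2/5,4/3]$, reducing to two endpoint evaluations. The paper instead \emph{lower bounds} $\sin(q)$ by its degree-$7$ Taylor polynomial, so its $h$ is the explicit function $h(y)=-\frac{ye^{-y^2/2}}{105}(y^6-7y^4+35y^2-105)$; this keeps everything closed-form, so that unimodality reduces to the sign pattern of an explicit degree-$8$ polynomial and $G(2/5),G(4/3)$ come out as exact exponential expressions rather than requiring quadrature with an error bound. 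Your exact-kernel route is conceptually cleaner, but it trades the paper's arithmetic for a rigorous numerical step that your sketch leaves unfinished --- and since $G(4/3)$ clears $3/20$ only by a few hundredths, that step is where nearly all the remaining work lives.

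There is also a small but real slip in the unimodality argument. From $h(y)=e^{-y^2}\phi(y)$ and $h'(y)=e^{-y^2/2}-2yh(y)$, the equation $h'(y)=0$ simplifies to $2y\,\phi(y)=e^{y^2/2}$, not $2y\,\phi(y)=1$. Because the right-hand side $e^{y^2/2}$ is itself increasing, the observation that $y\mapsto 2y\phi(y)$ is increasing does \emph{not} by itself give a unique root. The fix is easy: set $\psi(y)=2y\phi(y)-e^{y^2/2}$, note $\psi(0)=-1$, $\psi(y)\to+\infty$, and $\psi'(y)=2\phi(y)+ye^{y^2/2}>0$, so $\psi$ is strictly increasing and has a unique zero $y^{\ast}$; the rest of your unimodality argument for $G$ (the three regimes, $G'$ strictly decreasing on the middle one) then goes through as you describe. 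With that correction and with the endpoint quadrature actually carried out with certified error, your proposal would constitute a correct alternative proof.
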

\begin{proof}
Since for every $q\ge 0$ we have $\sin (q)\ge
q-\frac{q^3}{6}+\frac{q^5}{120}-\frac{q^7}{5040}$,
\begin{eqnarray}\label{eq:use sin taylor}
&&\!\!\!\!\!\!\!\!\!\!\!\!\!\!\!\!\!\!\!\!\!\!\!\!\nonumber\int_0^\infty\exp\left(-\frac{x^2+y^2}{2}\right)\sin(xy)dx\\ \nonumber&\ge&
\int_0^\infty\exp\left(-\frac{x^2+y^2}{2}\right)\left(xy-\frac{(xy)^3}{6}+\frac{(xy)^5}{120}-\frac{(xy)^7}{5040}\right)dx\\&=&
-\frac{ye^{-y^2/2}}{105}\left(y^6-7y^4+35y^2-105\right)\eqdef h(y).
\end{eqnarray}
We claim that there is a unique $w\in [0,2]$ at which $h'$ vanishes,
and moreover $h$ attains its maximum on $[0,2]$ at $w$. Indeed,
$h'(0)=e^{-y^2/2}\left(y^8-14y^6+70y^4-210y^2+105\right)/105$, and
therefore $h'(y)=1>0$ and $h'(2)=-17/(7e^2)<0$, so it suffices to
show that $h'$ can have at most one zero on $[0,2]$. To this end it
suffices to show that the polynomial
$p(y)=y^8-14y^6+70y^4-210y^2+105$ is monotone on $[0,2]$. This is
indeed the case since for $y\in [0,2]$ we have $ p'(y)=
-8y^5(4-y^2)-y(52\left(y^2-35/13\right)^2+560/13)<0$.

The above reasoning shows that if we set
$G(z)=\int_{z-1/4}^{z+1/4}h(y)dy$ then $G$ does not have local
minima on $[2/5,4/3]$. Indeed, if $z\in [2/5,4/3]$ satisfies
$G'(z)=h(z+1/4)-h(z-1/4)=0$ then $z-1/4<w<z+1/4$, implying that
$h'(z-1/4)>0$ and $h'(z+1/4)<0$. Hence $G''(z)=h'(z+1/4)-h'(z-1/4)<0$,
so $z$ cannot be a local minimum of $G$. Now, for every $z\in
[2/5,4/3]$ we have
$$
\int_{z-\frac14}^{z+\frac14}
\int_0^\infty\exp\left(-\frac{x^2+y^2}{2}\right)\sin(xy)dx
dy\stackrel{\eqref{eq:use sin taylor}}{\ge} G(z)\ge
\min\left\{G\left(\frac25\right),G\left(\frac43\right)\right\}>
\frac{3}{20},
$$
where we used the fact that the above values of $G$ can be computed
in closed form, for example
$G(4/3)=(19047383/313528320)e^{-169/288}+(131938921/313528320)e^{-361/288}>0.153$.
\end{proof}

We will consider the following two quantities:
\begin{equation}\label{eq:def M0}
M_0\eqdef\int_0^\infty\int_0^\infty \exp\left(-\frac{x^2+y^2}{2}\right)\sin(xy)dx
dy= \frac{\log\left(1+\sqrt{2}\right)}{\sqrt{2}}=0.6232...,
\end{equation}
and
\begin{equation}\label{eq:def M1}
M_1\eqdef\int_0^\infty\int_0^\infty \exp\left(-\frac{x^2+y^2}{2}\right)|\sin(xy)|dx
dy.
\end{equation}
Our goal is to show that $M=M_0$. Clearly $M_0\le M_1$. Our next
(technical) lemma shows that $M_1$ is actually quite close to $M_0$.

\begin{lemma}\label{lem:M0M1}
$M_1-M_0<\frac{1}{20}$.
\end{lemma}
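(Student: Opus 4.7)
The plan is to compute $M_1$ exactly as an explicit convergent series, and then verify $M_1 - M_0 < 1/20$ by an elementary numerical estimate. The key idea is to Fourier-expand $|\sin(xy)|$ in the product variable $u = xy$ and to evaluate each of the resulting two-dimensional Gaussian integrals in closed form.

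Concretely, I would start from the classical Fourier series
\begin{equation*}
|\sin u| \;=\; \frac{2}{\pi} - \frac{4}{\pi}\sum_{k=1}^\infty \frac{\cos(2ku)}{4k^2-1},
\end{equation*}
whose partial sums are uniformly bounded by $4/\pi$ (since $\sum_{k\ge 1}(4k^2-1)^{-1}= 1/2$), so that dominated convergence justifies interchanging the sum with the integral in~\eqref{eq:def M1}. The constant term contributes $(2/\pi)\cdot(\pi/2)=1$, while for each $k\ge 1$, the Gaussian identity $\int_{-\infty}^\infty e^{-x^2/2}\cos(\alpha x)\,dx = \sqrt{2\pi}\,e^{-\alpha^2/2}$, applied first with $\alpha=2ky$ and then integrated in $y$, yields
\begin{equation*}
\int_0^\infty\!\!\int_0^\infty e^{-(x^2+y^2)/2}\cos(2kxy)\,dx\,dy \;=\; \frac{\pi}{2\sqrt{4k^2+1}}.
\end{equation*}
Putting these pieces together gives the closed-form identity
\begin{equation*}
M_1 \;=\; 1 - 2\sum_{k=1}^\infty \frac{1}{(4k^2-1)\sqrt{4k^2+1}}.
\end{equation*}

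Combined with $M_0 = \log(1+\sqrt{2})/\sqrt{2}$ from \eqref{eq:def M0}, the claim $M_1 - M_0 < 1/20$ is equivalent to the purely arithmetic inequality
\begin{equation*}
\frac{\log(1+\sqrt{2})}{\sqrt{2}} + 2\sum_{k=1}^\infty \frac{1}{(4k^2-1)\sqrt{4k^2+1}} \;>\; \frac{19}{20}.
\end{equation*}
I would verify this by retaining only the $k=1,2,3$ terms of the series: using the elementary bounds $\sqrt 5 < 2.24$, $\sqrt{17} < 4.13$, $\sqrt{37} < 6.09$, the partial sum $(3\sqrt 5)^{-1}+(15\sqrt{17})^{-1}+(35\sqrt{37})^{-1}$ exceeds $0.169$, while elementary estimates on $\log$ and $\sqrt 2$ give $M_0 > 0.622$; hence the left-hand side is at least $2(0.169) + 0.622 > 0.96$. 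The analytic content is entirely routine — Fourier expansion plus Gaussian integration — so the main obstacle is merely the final numerical check, but since the slack is roughly $0.01$ there is ample margin, and retaining further terms of the series would sharpen the estimate at will.
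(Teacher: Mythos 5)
Your proof is correct, and it takes a genuinely different and arguably cleaner route than the paper's. The paper bounds $|\sin(xy)| = \sqrt{(1-\cos(2xy))/2}$ from above by $\tfrac{1}{\sqrt2}p_n(\cos(2xy))$, where $p_n$ is the degree-$(2n-1)$ Taylor polynomial of $\sqrt{1-x}$ (so $\sqrt{1-x}\le p_n(x)$ on $(-1,1)$), then uses the fact that each $\int_0^\infty\int_0^\infty e^{-(x^2+y^2)/2}\cos(2xy)^j\,dx\,dy$ is expressible in closed form (as $\pi$ times a rational combination of square roots of integers), and takes $n=11$ to get the bound $M_1<0.671$. You instead use the Fourier series
$|\sin u| = \tfrac{2}{\pi}-\tfrac{4}{\pi}\sum_{k\ge1}\cos(2ku)/(4k^2-1)$,
which is absolutely and uniformly convergent, so the term-by-term integration against the Gaussian is immediate by dominated convergence; after the elementary Gaussian identity $\int_0^\infty\int_0^\infty e^{-(x^2+y^2)/2}\cos(2kxy)\,dx\,dy = \pi/(2\sqrt{4k^2+1})$ this gives the exact closed form $M_1 = 1-2\sum_{k\ge1}\bigl((4k^2-1)\sqrt{4k^2+1}\bigr)^{-1}$. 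Because all terms in this series are positive, truncation gives an upper bound on $M_1$ directly, and only three terms are needed to obtain $M_1<0.662<M_0+1/20$. Both approaches ultimately reduce to Gaussian integrals of $\cos(2kxy)$, but yours reaches an exact series identity rather than a one-sided polynomial approximation, which trades the paper's degree-21 expansion (and its $n=11$ bookkeeping) for a three-term numerical check with comfortable slack; this is a nice simplification. One small point of care: the numerical estimates you quote ($\sqrt5<2.24$, $\sqrt{17}<4.13$, $\sqrt{37}<6.09$) make the three terms of the series \emph{smaller}, which is the correct direction for a lower bound on the sum; similarly $M_0 > 0.622$ needs $\log(1+\sqrt2)>0.622\sqrt2\approx 0.8796$, which holds since $\log(1+\sqrt2)=0.8813\ldots$. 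So the arithmetic is safe.
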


\begin{proof}
Since the integral in~\eqref{eq:def M1} converges very quickly and
can therefore be computed numerically with high precision,
Lemma~\ref{lem:M0M1} does not have much content. Nevertheless, we
wish to explain how to reduce this lemma to an evaluation of an
integral that can be computed in closed form. Let $p_n$ be the
Taylor polynomial of degree $2n-1$ of the function $x\mapsto
\sqrt{1-x}$, i.e.,
$$
p_n(x)=\sum_{k=0}^{2n-1}(-1)^k\binom{1/2}{k}x^k=1+\sum_{k=1}^{2n-1}\frac{\prod_{j=0}^{k-1}(2j-1)}{2^kk!}x^k.
$$
Then $\sqrt{1-x}\le p_n(x)$ for all $x\in (-1,1)$ and $n\in \N$.

Now,
\begin{multline}\label{eq:cos taylor}
M_1=\int_0^\infty\int_0^\infty
\exp\left(-\frac{x^2+y^2}{2}\right)\sqrt{\frac{1-\cos(2xy)}{2}}dx
dy\\\le \frac{1}{\sqrt{2}}\int_0^\infty\int_0^\infty
\exp\left(-\frac{x^2+y^2}{2}\right)p_n(\cos(2xy))dx dy.
\end{multline}
For $j\in \N$ the integral $\int_0^\infty\int_0^\infty
\exp\left(-\frac{x^2+y^2}{2}\right)(\cos(2xy))^jdx dy$ can be
computed in closed form (it equals $\pi$ times a linear combination
with rational coefficients of square roots of integers). One can
therefore explicitly evaluate the integral in right hand side
of~\eqref{eq:cos taylor} for $n=11$, obtaining the bound
$M_1<0.671<M_0+0.05.$
\end{proof}

\begin{lemma}\label{lem:small const int}
$f$ and $g$ are constant on the interval $[2/5,4/3]$.
\end{lemma}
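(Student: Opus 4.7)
The plan is to argue by contradiction, assembling three previously established ingredients: the $\sqrt{2}$-Lipschitz bound on $I$ (Lemma~\ref{lem:LipI}), the integral lower bound of Lemma~\ref{lem:y0}, and the gap estimate $M_1-M_0<1/20$ (Lemma~\ref{lem:M0M1}). I will show that $g$ is constant on $[2/5,4/3]$; the argument for $f$ is then identical after swapping the roles of $f$ and $g$, applying the same reasoning to the function $\int_0^\infty g(y)e^{-(x^2+y^2)/2}\sin(xy)\,dy$ (which is $\sqrt{2}$-Lipschitz by the same proof as Lemma~\ref{lem:LipI}, since that proof uses only $|f|\le 1$) and invoking~\eqref{eq:f=sign} in place of~\eqref{eq:g=sign}.

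Assume for contradiction that $g$ is not essentially constant on $[2/5,4/3]$. Since $g=\sign(I)$ a.e.\ by~\eqref{eq:g=sign} and $I$ is continuous, the intermediate value theorem forces some $y_0\in[2/5,4/3]$ with $I(y_0)=0$, and $y_0-1/4\ge 3/20>0$. The Lipschitz bound gives $|I(y)|\le\sqrt{2}|y-y_0|$, hence
\begin{equation*}
\int_{y_0-1/4}^{y_0+1/4}|I(y)|\,dy \;\le\; \frac{\sqrt{2}}{16}.
\end{equation*}
To bound the contribution from the complement, I would introduce the pointwise majorant $U(y)\eqdef\int_0^\infty e^{-(x^2+y^2)/2}|\sin(xy)|\,dx$, which satisfies $|I(y)|\le U(y)$ (from $|f|\le 1$) and $U(y)\ge \Phi(y)\eqdef\int_0^\infty e^{-(x^2+y^2)/2}\sin(xy)\,dx$ (the integrand of Lemma~\ref{lem:y0}). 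Combining $\int_0^\infty U(y)\,dy=M_1$ from~\eqref{eq:def M1} with Lemmas~\ref{lem:y0} and~\ref{lem:M0M1} yields
\begin{equation*}
\int_{(0,\infty)\setminus[y_0-1/4,\,y_0+1/4]}|I(y)|\,dy \;\le\; M_1-\tfrac{3}{20} \;<\; M_0-\tfrac{1}{10}.
\end{equation*}
Summing the two bounds and recalling $M=\int_0^\infty|I(y)|\,dy$ from~\eqref{eq:absolute fg} gives $M\le \sqrt{2}/16+M_0-1/10 < M_0$ (since $\sqrt{2}<8/5$), contradicting the trivial lower bound $M\ge M_0$ obtained by plugging $f\equiv g\equiv 1$ into~\eqref{eq:def M}.

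I do not foresee a genuine obstacle: the whole argument reduces to the numerical verification $\sqrt{2}/16+1/20<3/20$, which holds with room to spare. Conceptually, the small ``deficit'' $M_1-M_0<1/20$ forces $|I|$ to nearly saturate its sinusoidal envelope $U$ in integrated sense, while Lemma~\ref{lem:y0} ensures that $U$ must be large on \emph{every} length-$1/2$ window centered in $[2/5,4/3]$; a sign change of $I$ creates (via Lipschitz continuity) a window where $|I|$ falls well short of $U$, and no other region of $(0,\infty)$ has enough slack to make up the loss. The only delicate point is the initial reduction to ``$I$ has a zero in $[2/5,4/3]$'', which uses the continuity of $I$ together with the optimality identity $g=\sign(I)$.
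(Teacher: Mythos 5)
Your proof is correct and follows essentially the same route as the paper: locate a zero $z$ of $I$ in $[2/5,4/3]$ via $g=\sign(I)$ and continuity, split $M=\int_0^\infty|I|$ over $[z-\tfrac14,z+\tfrac14]$ and its complement, bound the window by the $\sqrt2$-Lipschitz estimate and the complement by $M_1$ minus the Lemma~\ref{lem:y0} contribution, then invoke Lemma~\ref{lem:M0M1}. The only cosmetic differences are that you name the pointwise majorant $U$ explicitly and phrase the contradiction as $M<M_0$, whereas the paper phrases it as forcing $M_1-M_0>3/50$; both rest on the identical numerical inequality $\sqrt2/16+1/20<3/20$.
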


\begin{proof}
Assume for contradiction that $g$ is not constant on $[2/5,4/3]$.
By~\eqref{eq:g=sign} we know that $g=\sign(I)$, where $I$ is given
in~\eqref{eq:def I}. Hence there is some $z\in [2/5,4/3]$ such that
$I(z)=0$. By Lemma~\ref{lem:LipI} we therefore know that $|I(y)|\le
\sqrt{2}|y-z|$ for all $y\in [0,\infty)$. Hence,
\begin{multline*}
M_0\le M\stackrel{\eqref{eq:absolute fg}}{\le}
\int_{[0,z-1/4]\cup[z+1/4,\infty)}
\int_0^\infty\exp\left(-\frac{x^2+y^2}{2}\right)|\sin(xy)|dxdy
+\int_{z-\frac14}^{z+\frac14}|I(y)|dy\\
\le M_1-\int_{z-\frac14}^{z+\frac14}
\int_0^\infty\exp\left(-\frac{x^2+y^2}{2}\right)|\sin(xy)|dx
dy+\frac{\sqrt{2}}{16}\stackrel{\eqref{eq:integratedI}}{<}M_1-\frac{3}{20}+\frac{\sqrt{2}}{16}<M_1-\frac{3}{50}.
\end{multline*}
This contradicts Lemma~\ref{lem:M0M1}.
\end{proof}

Before proceeding to the proof of Theorem~\ref{thm:KTJ}, we record
one more elementary lemma.

\begin{lemma}\label{lem:a^3}
For every $a>0$ we have
\begin{equation}\label{eq:a^3}
\int_a^\infty e^{-x^2/2}dx<\frac{16}{3e^2a^3}.
\end{equation}
\end{lemma}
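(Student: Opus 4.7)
The plan is to prove the inequality by showing that the difference
$$
\psi(a) \eqdef \frac{16}{3e^2 a^3} - \int_a^\infty e^{-x^2/2}\,dx
$$
is strictly positive on $(0,\infty)$. This will follow from a monotonicity argument together with the asymptotic behavior at $\infty$, bypassing any case analysis or piecewise estimates.

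First I would compute the derivative
$$
\psi'(a) = -\frac{16}{e^2 a^4} + e^{-a^2/2},
$$
and observe that $\psi'(a)\le 0$ is equivalent to $a^4 e^{2-a^2/2} \le 16$. Set $u(a)\eqdef a^4 e^{2-a^2/2}$. Differentiating gives $u'(a) = a^3(4-a^2)e^{2-a^2/2}$, so on $(0,\infty)$ the function $u$ has a unique critical point at $a=2$, which is a global maximum. Since $u(2) = 16\cdot e^{2-2} = 16$, we obtain $u(a) \le 16$ for all $a>0$, with equality only at $a=2$. Consequently $\psi'(a) \le 0$ everywhere on $(0,\infty)$, so $\psi$ is non-increasing (in fact strictly decreasing, since $\psi'$ vanishes only at the isolated point $a=2$).

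Next, since $\int_a^\infty e^{-x^2/2}\,dx \le e^{-a^2/2}/a \to 0$ and $16/(3e^2 a^3)\to 0$ as $a\to\infty$, we have $\lim_{a\to\infty}\psi(a) = 0$. Combining this with the strict monotonicity of $\psi$ yields $\psi(a) > 0$ for every $a>0$, which is precisely the claimed bound~\eqref{eq:a^3}.

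There is no real obstacle here: once one notices that the numerical constant $\frac{16}{3e^2}$ is engineered so that the critical identity $\max_{a>0}\, a^4 e^{2-a^2/2} = 16$ holds exactly (attained at $a=2$), the inequality reduces to a short calculus computation. The only thing to be slightly careful about is the strictness, but this follows because $\psi$ is strictly decreasing and tends to zero at infinity.
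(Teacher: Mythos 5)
Your proof is correct and is essentially identical to the paper's: both define $\psi(a)=\frac{16}{3e^2 a^3}-\int_a^\infty e^{-x^2/2}\,dx$, show $\psi$ is decreasing on $(0,\infty)$ with $\lim_{a\to\infty}\psi(a)=0$, and reduce the sign of $\psi'$ to a one-variable maximization attained at $a=2$ where the constant $16/(3e^2)$ is tuned to give exact equality. The only cosmetic difference is that the paper takes logarithms and works with $\rho(a)=a^2-8\log a+2\log(3c)$, whereas you work directly with $u(a)=a^4 e^{2-a^2/2}$; these are the same calculation.
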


\begin{proof}
Set $c=16/(3e^2)$ and $\psi(a)=\frac{c}{a^3}-\int_a^\infty
e^{-x^2/2}dx$. Since $\lim_{a\to\infty}\psi(a)=0$, it suffices to show that $\psi$ is decreasing on
$(0,\infty)$. Because $\psi'(a)=e^{-a^2/2}-\frac{3c}{a^4}$, our goal is
to show that $e^{a^2/2}\ge a^4/(3c)$, or equivalently that $a^2\ge
8\log a-2\log(3c)$. Set $\rho(a) =a^2-8\log a+2\log(3c)$. Since
$\rho'(a)=2a-8/a$, the minimum of $\rho$ is attained at $a=2$. We
are done, since by the choice of $c$ we have $\rho(2)=0$.
\end{proof}

\begin{proof}[Proof of Theorem~\ref{thm:konig}]
Due to Lemma~\ref{lem:small const int}, we may assume from now on
that $f(x)=1$ for all $x\in [2/5,4/3]$. Our goal is to show that
under this assumption $f=g=1$. We shall achieve this in a few steps.

We will first show that $f(y)=g(y)=1$ for all $y\in [0,4/3]$. To see
this, fix $y\in (0,1/2]$ and note that the function $x\mapsto
\sin(xy)$ is concave on $[0,4/3]$ (since in this range we have
$xy\le 2/3\le \pi/2$). It follows that for $x\in [0,4/3]$ we have
$\sin(xy)\ge \frac{\sin(4y/3)}{4/3}x$ and for $x\in [4/3,\infty)$ we
have $|\sin(xy)|\le \frac{\sin(4y/3)}{4/3}x$. Using this fact, the
fact that $|\sin(xy)|\le xy$ for all $x\ge 0$, and the fact that
$f=1$ on $[2/5,4/3]$, we deduce that
\begin{eqnarray}\label{eq:r}
\nonumber&&\!\!\!\!\!\!\!\!\!\!\!\!\!\!\!\!\!\!\!\!\!\!\!\!\!\!\!\int_0^\infty e^{-x^2/2}\sin(xy)f(x)dx\\&\ge&\nonumber
-\int_0^{\frac25}e^{-x^2/2}xydx+\int_{\frac25}^{\frac43}
e^{-x^2/2}\frac{\sin(4y/3)}{4/3}xdx-\int_{\frac43}^\infty
e^{-x^2/2}\frac{\sin(4y/3)}{4/3}xdx\\
&=&
-\left(1-e^{-2/25}\right)y+\frac{\sin(4y/3)}{4/3}\left(e^{-2/25}-2e^{-8/9}\right)\eqdef
r(y).
\end{eqnarray}
Note that for $z\in [0,1/2]$ we have
\begin{multline*}
r'(z)=\cos\left(\frac{4z}{3}\right)\left(e^{-2/25}-2e^{-8/9}\right)-\left(1-e^{-2/25}\right)\\\ge
\cos\left(\frac23\right)\left(e^{-2/25}-2e^{-8/9}\right)-\left(1-e^{-2/25}\right)>0.002>0.
\end{multline*}
Hence $r$ in increasing on $[0,1/2]$, and in particular
$r(y)>r(0)=0$. By~\eqref{eq:g=sign} and~\eqref{eq:r}, it follows
that $g(y)=1$ for all $y\in [0,1/2]$. Since Lemma~\ref{lem:small
const int} tells us that $g$ is constant on $[2/5,4/3]$, it follows
that $g=1$ on $[0,4/3]$. We my now repeat the above argument with
the roles of $f$ and $g$ interchanged, deducing that $f=g=1$ on
$[0,4/3]$.

Our next goal is to show that $f=g=1$ on $[0,3\pi/4]$. We already
know that $f=g=1$ on $[0,4/3]$, so assume that $y\in [4/3,3\pi/4]$.
Then $4y/3\ge (4/3)^2>\pi/2$ and $4y/3\le \pi$, implying that
\begin{multline}\label{eq:8/27}
\int_0^{\frac43}f(x)e^{-x^2/2}\sin(xy)dx\ge
\int_0^{\frac{\pi}{2y}}e^{-x^2/2}\sin(xy)dx\stackrel{(*)}{\ge}
\int_0^{\frac{\pi}{2y}}e^{-x^2/2}\frac{2xy}{\pi}dx\\=\frac{2y}{\pi}\left(1-e^{-\pi^2/(8y^2)}\right)
\stackrel{(**)}{\ge}
\frac{2y}{\pi}\left(\frac{\pi^2}{8y^2}-\frac{\pi^4}{128y^4}\right)=\frac{\pi}{4y}-\frac{\pi^3}{64y^3}\stackrel{(***)}{\ge}
\frac{8}{27},
\end{multline}
where in $(*)$ we used the fact that $\sin(z)\ge 2z/\pi$ for $z\in
[0,\pi/2]$, in $(**)$ we used the elementary inequality $1-e^{-z}\ge
z-z^2/2$, which holds for all $z\ge 0$, and in $(***)$ we used the
fact that the function $y\mapsto \frac{\pi}{4y}-\frac{\pi^3}{64y^3}$
has a unique local maximum on $(1,\infty)$, which implies that its
minimum on $[4/3,3\pi/4]$ is attained at the endpoints, and
therefore its minimum on $[4/3,3\pi/4]$ equals $8/27$.

Now,
\begin{multline}\label{eq:g pos a bit bigger}
\int_0^\infty
f(x)e^{-x^2/2}\sin(xy)dx\stackrel{\eqref{eq:8/27}}{\ge}
\frac{8}{27}-\int_{\frac43}^\infty
e^{-x^2/2}dx=\frac{8}{27}-\left(\sqrt{\frac{\pi}{2}}-\int_0^{\frac43}e^{-x^2/2}dx\right)\\\ge
\frac{8}{27}-\sqrt{\frac{\pi}{2}}+\int_0^{\frac43}\left(1-\frac{x^2}{2}+\frac{x^4}{8}
-\frac{x^6}{48}\right)dx=\frac{302572}{229635}-\sqrt{\frac{\pi}{2}}>0.
\end{multline}
Using~\eqref{eq:g=sign} we deduce from~\eqref{eq:g pos a bit bigger}
that $g(y)=1$. By symmetry the same argument applies to $f$,
implying that $f=g=1$ on $[0,3\pi/4]$.

Let $A\ge 3\pi/4$ be the supremum over those $a>0$ such that $f=g=1$
on $[0,a]$. Our goal is to show that $A=\infty$, so assume for
contradiction that $A$ is finite.

Note that for every $y>0$ and every $k\in \N\cup\{0\}$,
\begin{equation}\label{eq:k}
\int_{\frac{2k\pi}{y}}^{\frac{2(k+1)\pi}{y}}e^{-x^2/2}\sin(xy)dx=
\int_{\frac{2k\pi}{y}}^{\frac{(2k+1)\pi}{y}}\left(e^{-x^2/2}-e^{-(x+\pi/y)^2/2}\right)\sin(xy)dx\ge
0.
\end{equation}
It follows that if $A\in [2k\pi/y,2(k+1)\pi/y]$ then
\begin{equation}\label{eq:A between}
\int_{\frac{2k\pi}{y}}^{A}e^{-x^2/2}\sin(xy)dx\ge 0.
\end{equation}
To check~\eqref{eq:A between}, note that for $A\in
[2k\pi/y,(2k+1)\pi/y]$ the integrand in~\eqref{eq:A between} is
nonnegative, and for $A\in [(2k+1)\pi/y,2(k+1)\pi/y]$ the integral
in~\eqref{eq:A between} is at least the integral in the left hand
side of~\eqref{eq:k}. Assume from now on that $y>A$ and note that
since $A\ge 3\pi/4$ we have $3\pi/(2y)<A$. This implies the
following bound:
\begin{eqnarray}\label{eq:first interval}
\nonumber\int_0^{\min\left\{\frac{2\pi}{y},A\right\}}e^{-x^2/2}\sin(xy)dx&\ge&
\int_0^{\frac{2\pi}{y}}e^{-x^2/2}\sin(xy)dx\\\nonumber
&=&\int_{0}^{\frac{\pi}{y}}\left(e^{-x^2/2}-e^{-(x+\pi/y)^2/2}\right)\sin(xy)dx\\
&=&\nonumber \int_{0}^{\frac{\pi}{y}}e^{-x^2/2}\left(1-e^{-\pi
x/y}e^{-(\pi/y)^2/2}\right)\sin(xy)dx\\
&\ge &\nonumber
\left(1-e^{-(\pi/y)^2/2}\right)\int_0^{\frac{\pi}{2y}} e^{-x^2/2}
\frac{2xy}{\pi}dx\\
&=&\nonumber
\frac{2y}{\pi}\left(1-e^{-\pi^2/(2y^2)}\right)\left(1-e^{-\pi^2/(8y^2)}\right)\\
&\ge&
\frac{2y}{\pi}\left(\frac{\pi^2}{2y^2}-\frac{\pi^4}{8y^4}\right)
\left(\frac{\pi^2}{8y^2}-\frac{\pi^4}{128y^4}\right)\ge
\frac{5\pi^3}{81y^3},
\end{eqnarray}
where in the last inequality of~\eqref{eq:first interval} we used
the fact that $y>A\ge 3\pi/4$.

A combination of~\eqref{eq:k}, \eqref{eq:A between} and
~\eqref{eq:first interval} with the fact that $f=1$ on $[0,A)$ shows
that
$$
\int_0^Af(x)e^{-x^2/2}\sin(xy)dx\ge \frac{5\pi^3}{81y^3},
$$
and therefore, by Lemma~\ref{lem:a^3},
\begin{equation}\label{eq:cubed pos}
\int_0^\infty f(x)e^{-x^2/2}\sin(xy)dx\ge
\frac{5\pi^3}{81y^3}-\int_A^\infty
e^{-x^2/2}dx\stackrel{\eqref{eq:a^3}}{\ge}
\frac{5\pi^3}{81y^3}-\frac{16}{3e^2A^3}. \end{equation} The right
hand side of~\eqref{eq:cubed pos} is positive provided $y\le 5A/4$.
By~\eqref{eq:g=sign} this means that $g=1$ on $[A,5A/4]$, and hence
also on $[0,5A/4]$. By symmetry, $f=1$ on  $[0,5A/4]$ as well,
contradicting the definition of $A$.
\end{proof}

\bigskip


\bibliographystyle{abbrv}

\bibliography{noKrivine}
\end{document}
Let $p_n$ be the Taylor polynomial of degree $2n-1$ for the function $x\mapsto \sqrt{1-x}$, i.e.,
$$
p_n(x)=\sum_{k=0}^{2n-1}(-1)^k\binom{1/2}{k}x^k=1+\sum_{k=1}^{2n-1}\frac{\prod_{j=0}^{k-1}(2j-1)}{2^kk!}x^k.
$$
Then $\sqrt{1-x}\le p_n(x)$ for all $x\in (-1,1)$ and $n\in \N$. Moreover, $p_n$ is decreasing on $(-1,1)$, since $p_n'$ is a polynomial with negative coefficients, which is therefore clearly decreasing on $(0,1)$. Moreover, $p_n'(-x)$ has alternating decreasing coefficients with constant term $-\frac12$, which shows that $p_n'$ is negative on $(-1,0)$ as well.

Now, let $q_n$ be the Taylor polynomial of $\cos(x)$ of degree $4n-2$, i.e.,
$$
q_n(x)=\sum_{k=0}^{4n-2}\frac{(-1)^k}{(2k)!}x^{2k}.
$$

Then for $x\in (-1,1)$,
$$
p_n(x)=\sqrt{1-x}+\frac{\prod_{j=1}^{2n}(2j-1)}{2^{2n}(2n-1)!}\int_0^x(x-t)^{2n-1}\frac{1}{(1-t)^{\frac12-2n}}dt
$$


\begin{lemma}\label{lem:M0M1}
$M_1-M_0<e^{-\pi}/2<0.05$.
\end{lemma}

\begin{proof}
Note that if $(x,y)\in (0,\infty)^2$ and $x^2+y^2\le \pi$ then $xy\in [0,\pi]$, whence  $\sin(xy)\ge 0$. Now,
\begin{multline*}
M_1-M_0\stackrel{\eqref{eq:def M0}\wedge\eqref{eq:def M1}}{=} 2\int_{(0,\infty)^2\setminus \left(\sqrt{\pi}\D\right)}\exp\left(-\frac{x^2+y^2}{2}\right)\max\{\sin(xy),0\}dxdy\\\le
2\int_{(0,\infty)^2\setminus \left(\sqrt{\pi}\D\right)}\exp\left(-\frac{x^2+y^2}{2}\right)dxdy=2\int_{\sqrt{\pi}}^\infty\int_0^{\pi/4}re^{-r^2/2}d\theta dr =\frac{\pi e^{-\pi/2}}{4}.
\end{multline*}
\end{proof}